\documentclass[review,onefignum,onetabnum]{siamart250211}

\usepackage{lipsum}
\usepackage{amsfonts}
\usepackage{graphicx}
\usepackage{epstopdf}
\usepackage{algorithmic}
\usepackage{amssymb}
\usepackage[english]{babel}
\usepackage{array}
\usepackage{adjustbox}
\usepackage{relsize}
\usepackage{spverbatim}
\usepackage{listings}
\usepackage{mathrsfs}
\usepackage{derivative}
\usepackage{mathtools}
\usepackage{float}
\usepackage[section]{placeins}

\usepackage{hyperref}
\usepackage{cleveref}

\usepackage{etoolbox}

\makeatletter
\newcommand{\crefcomma}[1]{%
  \begingroup
    \def\crefcomma@sep{}%
    \forcsvlist{\crefcomma@do}{#1}%
  \endgroup
}
\newcommand{\crefcomma@do}[1]{%
  \ifx\crefcomma@sep\@empty\else,~\fi
  \cref{#1}%
  \def\crefcomma@sep{,}%
}
\newcommand{\Crefcomma}[1]{%
  \begingroup
    \def\crefcomma@sep{}%
    \forcsvlist{\Crefcomma@do}{#1}%
  \endgroup
}
\newcommand{\Crefcomma@do}[1]{%
  \ifx\crefcomma@sep\@empty\else,~\fi
  \Cref{#1}%
  \def\crefcomma@sep{,}%
}
\makeatother

\DeclareMathAlphabet{\mathpzc}{OT1}{pzc}{m}{it}

\newcolumntype{L}{>{$}l<{$}}

\ifpdf
  \DeclareGraphicsExtensions{.eps,.pdf,.png,.jpg}
\else
  \DeclareGraphicsExtensions{.eps}
\fi

\newsiamremark{hypothesis}{Hypothesis}
\crefname{hypothesis}{Hypothesis}{Hypotheses}
\newsiamthm{claim}{Claim}
\newtheorem{remark}{Remark}
\nolinenumbers

\headers{The general Brannan coefficient conjecture II}{T. M. Dunster}

\title{The general Brannan coefficient conjecture II: Meijer-function approximations}

\author{T. M. Dunster\thanks{Department of Mathematics and Statistics, San Diego State University, 5500 Campanile Drive, San Diego, CA 92182-7720, USA. 
  (\email{mdunster@sdsu.edu}, \url{https://tmdunster.sdsu.edu}).}
  }

\usepackage{amsopn}

\makeatletter
\newcommand*{\addFileDependency}[1]{% argument=file name and extension
  \typeout{(#1)}% latexmk will find this if $recorder=0 (however, in that case, it will ignore #1 if it is a .aux or .pdf file etc and it exists! if it doesn't exist, it will appear in the list of dependents regardless)
  \@addtofilelist{#1}% if you want it to appear in \listfiles, not really necessary and latexmk doesn't use this
  \IfFileExists{#1}{}{\typeout{No file #1.}}% latexmk will find this message if #1 doesn't exist (yet)
}
\makeatother

\nolinenumbers

\ifpdf
\hypersetup{
  pdftitle={The general Brannan coefficient conjecture II: Meijer-function approximations},
  pdfauthor={T. M. Dunster}
}
\fi

\begin{document}

\maketitle

\begin{abstract}
The coefficients $A_n(\alpha,\beta,\omega)$ in the Maclaurin expansion $(1+\omega z)^{\alpha}(1-z)^{-\beta}=\sum_{n=0}^{\infty} A_n(\alpha,\beta,\omega)z^n$ are considered for $|\omega|=1$ and $\alpha,\beta\in(0,1]$. D. A. Brannan conjectured in a 1973 paper that $|A_n(\alpha,\beta,\omega)|\le A_n(\alpha,\beta,1)$ for every positive odd integer $n$. The present author recently established the conjecture outside a small neighbourhood of $\omega=-1$. The remaining range is treated here by combining compound Laplace integral representations with two types of local approximation: a Meijer $G$ function approximation for $n|\arg(-\omega)|$ bounded, and a modified Watson approximation for the complementary range. The resulting lower bounds reduce the problem to numerical positivity checks for explicit functions on compact parameter sets. These computations verify the inequality for all $\alpha,\beta\in(0,1]$ and all odd integers $n\ge5$, and hence, together with Brannan's result for $n=3$, complete the proof of his conjecture.
\end{abstract}

\begin{keywords}
{Brannan’s conjecture, Watson's lemma, hypergeometric functions, Meijer G functions, univalent functions, coefficient inequalities}
\end{keywords}

\begin{AMS}
33C05, 30C45, 30C50, 41A60
\end{AMS}

\section{Introduction}

We begin by recalling Brannan's coefficient conjecture and reducing it to an equivalent inequality for a hypergeometric function. For $n=1,2,3,\ldots$, define $A_n(\alpha,\beta,\omega)$ by
%%%%%%%%%%%%%%%%%
\begin{equation}
\label{eq01}
\frac{(1+\omega z)^{\alpha}}{(1-z)^{\beta}}
= \sum_{n=0}^{\infty} A_n(\alpha,\beta,\omega)z^n,
\end{equation}
%%%%%%%%%%%%%%%%%
where $z,\omega\in\mathbb{C}$, $|z|<1=|\omega|$, and $\alpha,\beta\in(0,1]$.

The purpose of this paper is to prove the following.

\begin{theorem}
\label{thm:Brannan}
Suppose $\alpha,\beta\in(0,1]$ and $0\le |\theta|\le \pi$. Then
%%%%%%%%%%%%%%%%%
\begin{equation}
\label{eq02}
\left|A_n(\alpha,\beta,e^{i\theta})\right|
\le A_n(\alpha,\beta,1)
\quad (n=3,5,7,\ldots).
\end{equation}
%%%%%%%%%%%%%%%%%
\end{theorem}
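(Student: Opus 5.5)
The plan is to split the range of $\theta$ and treat the neighbourhood of $\theta=\pm\pi$ separately. The case $n=3$ is Brannan's own result, and the author's earlier paper (part I) establishes \cref{eq02} for all odd $n\ge5$ outside a small neighbourhood $|\theta\mp\pi|\le\delta_n$ of $\omega=-1$. So it suffices to treat $\omega=-e^{i\phi}$ with $|\phi|$ small. First I will rewrite $A_n$ as a terminating hypergeometric sum,
\begin{equation*}
A_n(\alpha,\beta,\omega)=\frac{(\beta)_n}{n!}\,{}_2F_1(-n,-\alpha;1-\beta-n;-\omega),
\end{equation*}
and express it by a compound Laplace-type integral. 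For example, use the Beta integral for $(\beta)_{n-k}/(n-k)!$ and sum the binomial series in $\omega$. This gives an integral over $t\in[0,1]$ of $t^{\beta-1}(1-t)^{n}$ times a factor $(1+\omega t/(1-t))^{\alpha}$-type kernel. After the substitution $t=s/n$, the kernel is concentrated where $s=O(1)$.

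Near $\omega=-1$ I will introduce the scaled variable $\tau=n\phi$ and consider two regimes. In the first regime, $|\tau|\le T$ for a fixed $T$. Here I expand the integrand uniformly in $n$. The leading term is an integral of the form $\int_0^\infty s^{\beta-1}e^{-s}(1-e^{i\tau}\cdots)^{\alpha}\,ds$, which can be identified as a Meijer $G$ function $G(\tau;\alpha,\beta)$. I then bound the remainder explicitly, by $C(\alpha,\beta,T)/n$ with computable $C$. In the second regime, $T\le|\tau|\le n\delta_n$, I will use a modified Watson's lemma. This means expanding the kernel about the endpoint in powers of $1/(n\phi)$, keeping two or three terms, and giving an explicit error bound valid uniformly in $\phi$.

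To compare with $A_n(\alpha,\beta,1)$, I use its known large-$n$ behaviour $\sim n^{\beta-1}2^{\alpha}/\Gamma(\beta)$. In both regimes $|A_n(\alpha,\beta,-e^{i\phi})|$ carries an extra factor of order $n^{-\alpha}$ or smaller, so the ratio is small except for small $n$. I will therefore write the required inequality as $F_n(\alpha,\beta,\tau)>0$. Here $F_n$ is (scaled $A_n(\alpha,\beta,1)$) minus (modulus of the approximant) minus (explicit error bound). For $n\ge N_0$ I prove positivity analytically from monotonicity of the bounds in $n$. For $5\le n<N_0$ I check positivity by rigorous numerics on the compact set $(\alpha,\beta,\tau)\in[0,1]^2\times[-T,T]$ and its Watson analogue. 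The endpoints $\alpha\to0$ and $\beta\to0$ need care, since all quantities extend continuously there.

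The main obstacle will be making the error bounds explicit and uniform in $\alpha,\beta\in(0,1]$, especially near $\beta\to0$ where $(\beta)_n/n!$ degenerates. These bounds must also be tight enough that the regimes overlap with each other and with the region covered by part I, so that the numerical checks are finite. I would first try normalizing by $\Gamma(\beta)$ so that the limit $\beta\to0$ is regular, and choose $T$ by experiment to balance the two error terms.
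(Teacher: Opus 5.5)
Your architecture matches the paper's: part I for $\phi\ge\phi_0$, Brannan for $n=3$, a split in $\eta=n\phi$, a Meijer-type leading term for bounded $\eta$, a modified Watson bound beyond it, and monotonicity in $n$ plus numerics. The positivity step, however, has a genuine gap. You flag the edges $\alpha\to0$ and $\beta\to0$ only as a continuity issue. The real problem is that the inequality degenerates to an equality there. At $\alpha=0$, $A_n=(\beta)_n/n!$ for every $\omega$. As $\beta\to0^+$, $A_n\to\binom{\alpha}{n}\omega^n$, whose modulus is independent of $\omega$ and equals $A_n(\alpha,0,1)\ge0$ for odd $n$. Hence $A_n(\alpha,\beta,1)-|A_n(\alpha,\beta,\omega)|\to0$ on both edges, for every $n$ and $\omega$. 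In particular the ratio $|A_n(\alpha,\beta,\omega)|/A_n(\alpha,\beta,1)$ tends to $1$ as $\alpha\to0$ for every $n$. So the $n^{-\alpha}$ gain you rely on is worthless uniformly, and large $n$ does not help. Your $F_n$ also vanishes on part of the boundary of the compact set on which you propose to verify strict positivity numerically. Near $(0,0)$ it is worse. In \cref{eq56} the $\sin(\pi\beta)$ and $\sin(\pi\alpha)$ parts are $\pm\pi\alpha\beta\,n^{-1}c_n(\zeta)+\cdots$ with the same $c_n(\zeta)=\ln(2n)-\frac12\int_0^\infty\ln(t^2+\zeta)e^{-t}\,dt$. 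They cancel, so the margin is only $\mathcal O(\alpha\beta(\alpha+\beta))$. The same cancellation occurs between the $\widetilde A_1$ and $\widetilde A_2$ terms in the Watson regime. A remainder bound $C(\alpha,\beta,T)/n$ with $C$ merely bounded swamps this margin for every $n$.

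The missing idea is to build each remainder estimate so that it carries the same vanishing factor, and then to verify positivity of normalized quotients rather than of $F_n$ itself. The paper does this in four steps:
\begin{enumerate}
\item It first applies the triangle inequality \cref{eq19}. This is exact on both edges and makes the leading terms the real-argument functions $M_{j,k}(\alpha,\beta;\zeta)$.
\item It bounds the remainder kernel by $\mathsf L>-0.0254\,\sigma(\alpha,\beta)s^{1-\beta}e^{s}$, where $\sigma$ from \cref{eq59} is comparable to $\alpha\beta(\alpha+\beta)$.
\item It checks $\mathcal H_5$ and $\mathcal Q_5$ of \cref{eq63,eq72}, both divided by $\alpha\beta(\alpha+\beta)$ and positive up to the edges. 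The derivative bound \cref{eq71} then reduces everything to $n=5$.
\item In the Watson regime the corresponding normalization is the prefactor $\nu\sin(\pi\alpha)\sin(\pi\beta)$ in \cref{eq84}.
\end{enumerate}
Two secondary points. First, your Beta-integral route leaves the truncated sum $\sum_{k\le n}\binom{\alpha}{k}(\omega/u)^k$ with $|\omega/u|>1$, which cannot be resummed to a power. The compound Laplace form \cref{eq08} instead comes from deforming the Cauchy contour around the cuts from $z=1$ and $z=-1/\omega$. Their coalescence gives scaled kernels like $(t-i\tau)^{\alpha}$, or $(t^2+\zeta)^{\alpha/2}$ after taking moduli, not $(1-e^{i\tau}\cdots)^{\alpha}$. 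Second, $A_n(\alpha,\beta,1)\sim2^\alpha n^{\beta-1}/\Gamma(\beta)$ is not enough. You need an exact form with explicit remainder that keeps the $n^{-1-\alpha}$ term, as in \cref{eq15}, since that term dominates as $\beta\to0$.
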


This was conjectured in 1973 by D. A. Brannan \cite{Brannan:1974:OCP} who verified the simplest non-trivial case $n=3$, and also showed that the inequality does not hold in general for even $n$. Our aim here is to establish \cref{eq02} for all remaining odd indices $n=5,7,9,\ldots$.

The special case $\beta=1$ has since been settled for all positive odd $n$; see \cite{Barnard:1997:OAC,Barnard:2021:ADP,Deniz:2020:TFS,Jayatilake:2013:BCF,Milcetich:1989:OAC,Szasz:2020:OTB}. In the two-parameter setting, the conjecture was verified on the diagonal $\alpha=\beta\in(0,1)$, and in the triangle $\frac34\le\alpha\le\beta\le1$; see \cite{Cotirla:2024:OTG,Ruscheweyh:2007:OBC}.

In \cite{Dunster:2026:GBC} the general case was recently studied by the present author, where the coefficients were expressed as
%%%%%%%%%%%%%%%%%
\begin{equation}
\label{eq03}
A_n(\alpha,\beta,\omega)
=\frac{(-1)^{n+1}}{\pi}
\omega^n w_{n}(\alpha,\beta,-1/\omega)
\quad (n=1,2,3,\dots),
\end{equation}
%%%%%%%%%%%%%%%%%
in which
%%%%%%%%%%%%%%%%%
\begin{equation}
\label{eq04}
w_{n}(\alpha,\beta,x)
=\frac{1}{n!} \Gamma(n-\alpha)\Gamma(1+\alpha)
\sin(\pi\alpha)\,
{}_2F_1\left(\beta,-n;\alpha-n+1;x\right).
\end{equation}
%%%%%%%%%%%%%%%%%
Here ${}_2F_1$ denotes Gauss's hypergeometric function, defined by \cite[Eq.~15.2.1]{NIST:DLMF}. If we let
%%%%%%%%%%%%%%%%%
\begin{equation}
\label{eq05}
x=-1/\omega=e^{i\phi},
\quad
\phi=\pi-\theta
\quad (\mathrm{mod} \; 2\pi),
\end{equation}
%%%%%%%%%%%%%%%%%
then by the Schwarz reflection principle, Brannan's conjecture is therefore equivalent to
%%%%%%%%%%%%%%%%%
\begin{equation}
\label{eq06}
w_{n}(\alpha,\beta,-1) \ge
\left|w_{n}(\alpha,\beta,e^{i\phi})\right|,
\end{equation}
%%%%%%%%%%%%%%%%%
for $\alpha,\beta\in(0,1]$, $\phi\in[0,\pi]$, and $n=3,5,7,\ldots$. In \cite{Dunster:2026:GBC} it was then shown that:

\begin{theorem}
\label{thm:Watson}
The inequality \cref{eq06} holds for $\alpha,\beta \in (0,1]$, $\phi \in [\phi_0,\pi]$, and $n=5,7,9,\ldots$, where $\phi_0=0.061$.
\end{theorem}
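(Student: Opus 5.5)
We plan to prove \cref{thm:Watson} by bounding $w_n(\alpha,\beta,e^{i\phi})$ from above by a Laplace-type integral, applying a Watson-lemma expansion with explicit error bounds, and comparing the result with a lower bound for $w_n(\alpha,\beta,-1)$.

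\textbf{Step 1: integral representation.} Since $-n$ is a negative integer, ${}_2F_1(\beta,-n;\alpha-n+1;x)$ is a polynomial. For $x=e^{i\phi}$ away from the positive real axis, we write it through an Euler-type integral. The natural route is to transform with a connection formula so that the $\Gamma(n-\alpha)$ prefactor is absorbed. The target is a compound Laplace integral of the form
\begin{equation*}
w_n(\alpha,\beta,x)=\frac{\sin(\pi\alpha)}{\pi}\int_0^\infty e^{-n t}\,f(t;\alpha,\beta,x)\,dt
\end{equation*}
or a short sum of such integrals. Each would have an amplitude $f$ that is explicitly of the form $t^{\alpha}(\cdots)(1-xe^{-t})^{-\beta}(\cdots)$, obtained from $(1+\omega z)^\alpha(1-z)^{-\beta}$ by Cauchy's formula with the contour collapsed onto the branch cuts from $z=1$ and $z=-1/\omega$. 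At $x=-1$ the integrand is positive. This gives $w_n(\alpha,\beta,-1)$ as a positive integral, which we bound below by its leading Watson term minus an explicit remainder.

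\textbf{Step 2: the main comparison.} For $x=e^{i\phi}$ we bound $|w_n|$ by the integral of $|f|$. We then bound the modulus factor $|1-e^{i\phi}e^{-t}|^{-\beta}$ in terms of the factor at $\phi=\pi$, namely $(1+e^{-t})^{-\beta}$, together with a decaying oscillation. The contribution from the endpoint at $z=1$ is common to both sides. The contribution from the singularity at $-1/\omega$ carries the phase $e^{in\phi}$ and is of size $n^{-1-\alpha}$ or smaller relative to the main term. Watson's lemma with two or three terms and explicit Olver-type error bounds then gives an inequality of the form
\begin{equation*}
w_n(\alpha,\beta,-1)-|w_n(\alpha,\beta,e^{i\phi})|\ \ge\ n^{\beta-1}\bigl(c(\alpha,\beta,\phi)-\varepsilon_n(\alpha,\beta,\phi)\bigr),
\end{equation*}
where $c>0$ is explicit (it is essentially $2^{-\beta}$ minus $|2\sin(\phi/2)|^{-\beta}$-type terms weighted by powers of $n\phi$), and $\varepsilon_n\to0$ uniformly on compact sets.

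\textbf{Step 3: reduction to finite checks.} For $n\ge N_0$ the positivity of $c-\varepsilon_n$ on $[\phi_0,\pi]\times(0,1]^2$ is verified numerically on a grid, using Lipschitz bounds in $\alpha,\beta,\phi$. The limits $\alpha\to0$ and $\beta\to0$ are handled separately, since there $\sin(\pi\alpha)$ factors out and the polynomial degenerates. For $5\le n<N_0$ the inequality is a polynomial inequality in $\cos\phi$ and is checked directly by interval arithmetic.

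\textbf{Main obstacle.} The hard step is making the error terms uniform as $\phi\to\phi_0$. Near $\phi=0$ the singularity at $x=e^{i\phi}$ coalesces with the endpoint $x=1$, so Watson's lemma loses uniformity; the relevant parameter is $n\phi$. The threshold $\phi_0=0.061$ reflects exactly where explicit bounds still beat the main term for all $n\ge5$. To handle this I would first try bounding $|1-e^{i\phi}e^{-t}|^{-1}\le(\sin\phi)^{-1}$ crudely for large $t$, and splitting the integral at $t\sim\phi$.
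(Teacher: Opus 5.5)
Your architecture (a two-cut compound Laplace representation, the triangle inequality for $|w_n(\alpha,\beta,e^{i\phi})|$, Watson's lemma with explicit remainders, then a numerical positivity check) is the route of \cite{Dunster:2026:GBC}. Its ingredients reappear here as \cref{lem:wnLaplace}, \cref{eq19} and \cref{thm:Watson-lower-bound}. However, Step 2 gets the mechanism backwards. The contribution of the cut from $z=1$, the $\sin(\pi\beta)$ term in \cref{eq08}, is not common to both sides. After the triangle inequality its amplitude is $(e^s-1)^{-\beta}R(s,\phi)^{\alpha}$, compared with $(e^s-1)^{-\beta}(e^s+1)^{\alpha}$ at $\phi=\pi$. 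The inequality $R(s,\phi)\le e^s+1$ is exactly what creates the positive gap, whose leading size is $\sin(\pi\beta)\Gamma(1-\beta)(2^\alpha-\rho^\alpha)n^{\beta-1}$ with $\rho=2\sin(\tfrac12\phi)$.

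The cut from $-1/\omega$ works against you. Since $|1-e^{i\phi}e^{-t}|^{-\beta}\ge(1+e^{-t})^{-\beta}$, it cannot be bounded by the $\phi=\pi$ factor. It produces a negative correction of about $\sin(\pi\alpha)\Gamma(1+\alpha)(\rho^{-\beta}-2^{-\beta})n^{-1-\alpha}$, which is of relative order $n^{-\alpha-\beta}$, not $n^{-1-\alpha}$. Its growth as $\rho\to0$ is what confines this theorem to $\phi\ge\phi_0$. Phases such as $e^{in\phi}$ play no role once moduli are taken. As you describe it, your $c$ is built from $2^{-\beta}-(2\sin\tfrac12\phi)^{-\beta}\le0$, so it has the wrong sign. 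And if the $z=1$ part really cancelled, there would be no positive term of order $n^{\beta-1}$ at all.

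The more serious gap is how you handle the remainders. You expand $w_n(\alpha,\beta,-1)$ and the majorant of $|w_n(\alpha,\beta,e^{i\phi})|$ separately, each with its own Olver-type error bound, and then ask for $c>\varepsilon_n$ on a grid. The exact difference vanishes in three places:
\begin{itemize}
\item at $\phi=\pi$, trivially and to second order in $\pi-\phi$;
\item on the edge $\alpha=0$, since $A_n(0,\beta,\omega)=A_n(0,\beta,1)$;
\item on the edge $\beta=0$, since $A_n(\alpha,0,\omega)=\omega^nA_n(\alpha,0,1)$.
\end{itemize}
The main terms vanish there too, but separately bounded remainders do not. So for every fixed $n$, $c>\varepsilon_n$ fails on a neighbourhood of these sets. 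Treating the limits $\alpha\to0$, $\beta\to0$ separately does not help, because the failure is on a neighbourhood, and you do not address $\phi\to\pi$ at all. No choice of $N_0$ or grid repairs this. The same degeneracy also defeats a raw interval-arithmetic check for $5\le n<N_0$ unless the vanishing factors are divided out first.

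The missing idea is to subtract the integrands before expanding. Apply Watson's lemma to $(e^s-1)^{-\beta}\{(e^s+1)^\alpha-R(s,\phi)^\alpha\}$ and to $(e^s-1)^{\alpha}\{R(s,\phi)^{-\beta}-(e^s+1)^{-\beta}\}$. Normalise these by $2^\alpha-\rho^\alpha$ and $\rho^{-\beta}-2^{-\beta}$, as in the kernels \cref{eq82,eq83}. Then the remainder in \cref{eq84} carries the same vanishing factors as the main terms. Finally, divide out $\nu\sin(\pi\alpha)\sin(\pi\beta)$ and $(\pi-\phi)^2$, as in \cite[Thm.~3.2]{Dunster:2026:GBC}, before checking positivity on a compact parameter set.
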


We shall combine this result with two new estimates, which together cover the remaining interval $\phi \in [0,\phi_0]$. To do so, this small interval is split according to the size of 
%%%%%%%%%%%%%%%%%
\begin{equation}
\label{eq07}
\eta:=n\phi.
\end{equation}
%%%%%%%%%%%%%%%%%
In \cref{sec:Meijer-approximations} we construct the required approximations using integrals that can be expressed in terms of the Meijer $G$-function \cite[Sec.~16.17]{NIST:DLMF}. These are then used in \cref{sec:case-I} to prove the following theorem for bounded $\eta$.

\begin{theorem}
\label{thm:main-Meijer}
The inequality \cref{eq06} holds for $\alpha,\beta \in (0,1]$, $\phi \in [0,\phi_0]$, $n=5,7,9,\ldots$, and $\eta \in [0,9]$.
\end{theorem}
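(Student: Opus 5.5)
Our plan is to obtain a Laplace-type integral representation of $w_n(\alpha,\beta,e^{i\phi})$ whose large-$n$ behaviour, with $\eta=n\phi$ held bounded, is captured uniformly by a Meijer $G$-function. Starting from \cref{eq04}, we use the Euler-type integral for ${}_2F_1(\beta,-n;\alpha-n+1;x)$, deformed into a loop (or compound Laplace) integral about the saddle/endpoint that governs $x$ near $-1$. Writing $x=e^{i\phi}$ and substituting $t=n\tau$, the integrand factors as $e^{-t}t^{a}(1+\dots)$ times a factor of the form $(1-e^{i\phi}\,\cdot)^{-\beta}$ which, to leading order, becomes a function of $\tau\eta$ only. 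The leading term is then an integral of the form $\int_0^\infty e^{-t}t^{\alpha-1}(1+i\eta t/n\cdots)^{-\beta}\,dt$ or its limit, which is a Meijer $G$ (equivalently a confluent hypergeometric) function $G(\alpha,\beta,\eta)$. We thus write
\begin{equation*}
w_n(\alpha,\beta,e^{i\phi})=w_n(\alpha,\beta,-1)\,\bigl\{G(\alpha,\beta,\eta)+R_n(\alpha,\beta,\eta)\bigr\},
\end{equation*}
normalized so that $G(\alpha,\beta,0)$ corresponds to the value at $\phi=0$ (that is, $\omega=-1$), which has modulus at most the value at $\phi=\pi$.

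The key steps are as follows. (i) Derive the integral representation and identify the Meijer-function main term precisely, including the ratio to $w_n(\alpha,\beta,-1)$ via $\Gamma$-function identities. (ii) Bound the remainder $R_n$ explicitly: Taylor-expanding $\log$ of the non-exponential factors in $t/n$ gives a remainder bounded by $C(\alpha,\beta,\eta)/n$, with an explicit constant obtained from elementary inequalities such as $|e^{z}-1|\le|z|e^{|z|}$ and bounds on $|1-e^{i\phi}s|$ for $\phi\le\phi_0$. (iii) Reduce \cref{eq06} to the inequality $|G(\alpha,\beta,\eta)|+|R_n|\le 1$, and hence to the positivity of the explicit function $1-|G(\alpha,\beta,\eta)|-C(\alpha,\beta,\eta)/n$ on the compact set $(\alpha,\beta,\eta)\in[0,1]^2\times[0,9]$, for all $n\ge n_0$. (iv) Handle the finitely many $n<n_0$ (with $\phi=\eta/n\le\phi_0$) by direct evaluation of the terminating hypergeometric polynomial \cref{eq04} on a grid, using Lipschitz bounds in $\alpha,\beta,\phi$ to pass from the grid to the continuum.

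The main obstacle is the degeneration near $\eta=0$. There $|G|\to 1$: at $\phi=0$, which is the point $\omega=-1$, the required inequality is only barely true (the ratio is strictly less than $1$, but not uniformly so as $\alpha\to0$ or $\beta\to0$). A crude bound $1-|G|-C/n$ would then fail. I would handle this by comparing $|w_n(e^{i\phi})|^2$ with its value at $\phi=0$, using evenness in $\phi$ to extract a factor $\eta^2$ from both $1-|G|^2$ and the remainder, and checking positivity of the quotient. Likewise, the endpoints $\alpha\to0$, where $\sin\pi\alpha$ vanishes, and $\alpha=1$, where $\Gamma(n-\alpha)$ changes behaviour, need the normalization by $w_n(\alpha,\beta,-1)$ so that they cancel, leaving functions that extend continuously to the closed square. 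This allows the numerical checks to be carried out on a genuinely compact set.
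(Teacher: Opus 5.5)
Your steps (ii)--(iii) assume a decomposition that does not exist uniformly on $[0,1]^2\times[0,9]$. The ratio $w_n(\alpha,\beta,e^{i\phi})/w_n(\alpha,\beta,-1)$ at bounded $\eta$ depends on $n$ through parameter-dependent powers, not through $1/n$. Already at $\eta=0$, Chu--Vandermonde gives $w_n(\alpha,\beta,1)=\pi(\alpha-\beta)\Gamma(n-\alpha+\beta)/\{n!\,\Gamma(1-\alpha+\beta)\}$. By \cref{eq15}, $w_n(\alpha,\beta,-1)\sim\sin(\pi\beta)2^\alpha\Gamma(1-\beta)n^{\beta-1}+\sin(\pi\alpha)2^{-\beta}\Gamma(1+\alpha)n^{-1-\alpha}$. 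So the ratio is, to leading order,
\begin{equation*}
\frac{\pi(\alpha-\beta)/\Gamma(1-\alpha+\beta)}{\sin(\pi\beta)2^\alpha\Gamma(1-\beta)\,n^{\alpha}+\sin(\pi\alpha)2^{-\beta}\Gamma(1+\alpha)\,n^{-\beta}} .
\end{equation*}
Its limit as $n\to\infty$ is $0$ for $\alpha,\beta>0$ but has modulus $1$ on the edges $\alpha=0$, $\beta=0$. The convergence is only like $n^{-\alpha}$ and is non-uniform near both edges. Hence no $n$-independent $G(\alpha,\beta,\eta)$ with a remainder $C(\alpha,\beta,\eta)/n$, $C$ bounded, exists, and "one compact-set check at $n_0$ plus finitely many $n<n_0$" does not close.

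The same mechanism persists for $\eta>0$. Replacing $2^\alpha$ by $|e^s-e^{i\phi}|^{\alpha}\approx(s^2+\rho^2)^{\alpha/2}$ costs a factor $n^{-\alpha}$ in the first Laplace integral, and $|e^s-e^{-i\phi}|^{-\beta}$ gains $n^{\beta}$ in the second. The missing ingredient is a way to handle these non-perturbative powers. The paper first applies the triangle inequality \cref{eq19}, so only real integrals need approximating. It then keeps the powers explicitly in $H_n(\alpha,\beta;\zeta)$ of \cref{lem:Meijer-bound} and checks positivity at $n=5$ (\cref{prop:scriptH5}). Finally, it shows $n^{2-\beta}\{H_n-\cdots\}$ is increasing in $n$ via the derivative bound of \cref{lem:n-derivative} and \cref{prop:Q5}.

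Second, you have misplaced the degeneracy. The display shows $|w_n(1)|/w_n(-1)$ is small, of order $n^{-\alpha}$, for generic parameters. So $1-|G|^2$ does not vanish at $\eta=0$, and extracting a factor $\eta^2$ gains nothing. The genuine degeneracy is on the edges $\alpha=0$ and $\beta=0$. There the generating function reduces to $(1-z)^{-\beta}$ or $(1+\omega z)^{\alpha}$, so $|A_n|$ is independent of $\omega$ and $w_n(-1)-|w_n(e^{i\phi})|\equiv0$ for every $\phi$. Normalizing by $w_n(-1)$ does not remove this zero.

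You must divide both the main term and the remainder bound by a factor vanishing on these edges at the right rate, and show the quotients extend continuously and positively to the closed square. The paper does this with the division by $\alpha\beta(\alpha+\beta)$ in \cref{eq63,eq72}, and with the factor $\sigma(\alpha,\beta)$ of \cref{eq59} in the remainder bound \cref{eq58}. The verification of the latter relies on the edge limits \cref{eq96}--\cref{eq104}. Without such a normalization, your grid-plus-Lipschitz check for $n<n_0$ also fails, because the quantity to be certified positive tends to zero at the edges.
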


The complementary range for unbounded $\eta$ is treated in \cref{sec:case-II}, using a modification of the elementary Watson approximation developed in \cite{Dunster:2026:GBC}, rather than the Meijer $G$-function approximations. We prove the following result.

\begin{theorem}
\label{thm:main-non-Meijer}
The inequality \cref{eq06} holds for $\alpha,\beta \in (0,1]$, $\phi \in (0,\phi_0]$, $n$ odd, and $\eta \in (9,\infty)$.
\end{theorem}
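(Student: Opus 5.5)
The plan is to adapt the modified Watson approximation of \cite{Dunster:2026:GBC}. Since $\eta=n\phi>9$ and $\phi\le\phi_0$, we have $n>9/\phi_0\approx 148$. So only large $n$ occurs, with $\phi$ small but $n\phi$ bounded below. I would start from a compound Laplace integral representation of ${}_2F_1(\beta,-n;\alpha-n+1;x)$, obtained via the Euler-type integral after the usual reflection to handle the negative parameters. Writing $x=e^{i\phi}$, the integrand then has an endpoint contribution governed by $e^{-n t}$ together with a factor of the form $(1-xe^{-t})^{-\beta}$ or similar, whose singularity lies at distance $O(\phi)$ from the endpoint.

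Since $\eta$ is large, the singularity is relatively far from the endpoint on the natural scale $t\sim 1/n$. Expanding in $t$ is therefore legitimate, provided it is done relative to $e^{i\phi}-1$ rather than to $1$. This gives a leading approximation
\[
\bigl|w_n(\alpha,\beta,e^{i\phi})\bigr|\approx C_{n}(\alpha)\,|1-e^{i\phi}|^{-\beta}\,(1+\text{correction}),
\]
where $C_n(\alpha)$ is the same gamma-ratio prefactor as in $w_n(\alpha,\beta,-1)$. The comparison quantity $w_n(\alpha,\beta,-1)$ behaves like $C_n(\alpha)2^{-\beta}$, up to a different, explicitly boundable, algebraic $n$-dependence coming from the $(1-z)^{-\beta}$ singularity. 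That $n$-dependence is $n^{\beta-1}$ versus $n^{\alpha}$-type growth; equivalently, in the original variables, $A_n(\alpha,\beta,1)\sim n^{\beta-1}2^\alpha/\Gamma(\beta)$, while near $\omega=-1$ the coefficient has the competing term $\sim n^{-\alpha-1}$. So the task is to bound the ratio $|w_n(e^{i\phi})|/w_n(-1)$ above by an explicit function of $(\alpha,\beta,\eta,1/n)$ and show that it is $<1$.

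Concretely, I would proceed in three steps.
\begin{enumerate}
\item Write $w_n(\alpha,\beta,e^{i\phi})$ as a main term plus a remainder, with the remainder bounded via the Watson-lemma error bounds of \cite{Dunster:2026:GBC}, truncating after one or two terms. The error bounds must be made uniform in $\phi\in(0,\phi_0]$ by replacing $\phi$ with $\eta/n$. The remainder is then $O(1/\eta)$ relative to the main term, with explicit constants, using $|1-e^{i\phi}|\ge (2/\pi)\phi$ and $\sin$-bounds valid on $[0,\phi_0]$.
\item Bound $w_n(\alpha,\beta,-1)$ below by the known positive leading term minus an explicit error. This is the same as in \cref{thm:Watson}, and I assume it is available from \cite{Dunster:2026:GBC}.
\item Form the difference $w_n(-1)-|w_n(e^{i\phi})|$, divide by the positive prefactor, and obtain a lower bound $F(\alpha,\beta,\eta,\epsilon)$ with $\epsilon=1/n\in(0,\phi_0/9]$. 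Monotonicity in $\eta$ lets me send $\eta\to\infty$ on one side and restrict to a compact interval, say $\eta\in[9,\eta_1]$, on the other. Positivity is then checked numerically on the compact set $[0,1]^2\times[9,\eta_1]\times[0,\phi_0/9]$, with the degenerate edges $\alpha\to0$ or $\beta\to0$ handled by taking limits after factoring out $\sin(\pi\alpha)$ and the $\beta$-dependent factor.
\end{enumerate}

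I expect the main obstacle to be uniformity near the parameter boundaries. As $\alpha\to0$ or $\beta\to0$ the main terms of the two sides nearly coincide, so the margin is governed by subleading terms. As $\beta\to1$ the singular factor $|1-e^{i\phi}|^{-\beta}\sim(\eta/n)^{-\beta}$ competes directly with $n^{\beta-1}$, and the ratio depends only on $\eta$ at leading order. For this reason the threshold $\eta=9$ must be large enough that the leading-order ratio is comfortably below $1$, leaving room for the explicit error bounds. If the one-term bound is not sharp enough near $\eta=9$, I would keep a second term of the expansion, expressed through an incomplete gamma function in $\eta$, before bounding the remainder.
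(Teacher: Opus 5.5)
\textbf{Verdict: genuine gap.} Your overall architecture matches the paper's: a Watson-type expansion with explicit remainder from \cite{Dunster:2026:GBC}, then $n=\eta/\phi$, then numerical positivity on a compact set. However, two parts of the proposal would not work as stated.

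\textbf{The leading-order picture is wrong.} $|w_n(\alpha,\beta,e^{i\phi})|$ is not approximately $C_n(\alpha)|1-e^{i\phi}|^{-\beta}$. From \cref{eq08}, with $\rho=|1-e^{i\phi}|$, it is bounded by $\sin(\pi\beta)\Gamma(1-\beta)\rho^{\alpha}n^{\beta-1}+\sin(\pi\alpha)\Gamma(1+\alpha)\rho^{-\beta}n^{-1-\alpha}$ plus remainders. The first of these terms comes from the singularity at $z=1$, and you drop it. It has the same order $n^{\beta-1}$ as the term $\sin(\pi\beta)\Gamma(1-\beta)2^{\alpha}n^{\beta-1}$ in $w_n(\alpha,\beta,-1)$, and it dominates unless $\beta$ is small.

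The inequality is won by $2^\alpha>\rho^\alpha$ in these $n^{\beta-1}$ terms. The unfavourable comparison $\rho^{-\beta}>2^{-\beta}$, which your displayed approximation focuses on, occurs only in the $n^{-1-\alpha}$ terms. There it is neutralised because $n^{-\alpha-\beta}<(\phi/9)^{\alpha+\beta}$ absorbs $\rho^{-\beta}\sim\phi^{-\beta}$; this is the $\widetilde A_2(\alpha,\beta,\phi)(\phi/9)^{\nu}$ term in $\widetilde P$. Taken literally, comparing $C_n(\alpha)\rho^{-\beta}$ with $C_n(\alpha)2^{-\beta}$ points the wrong way.

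\textbf{Separate remainder bounds fail near the edges.} This is the more serious problem. Your steps 1--3 bound the remainders of $|w_n(\alpha,\beta,e^{i\phi})|$ and of $w_n(\alpha,\beta,-1)$ separately and then subtract. That cannot give positivity near $\alpha=0$ or $\beta=0$. On these edges (by continuous extension) $|A_n(\alpha,\beta,\omega)|$ is independent of $\omega$, so the two sides of \cref{eq06} coincide. For fixed $n$ and $\phi$, the difference of main terms vanishes like $\alpha\beta(\alpha+\beta)$ as $(\alpha,\beta)\to(0,0)$. Separately bounded errors do not vanish on these edges, so your $F$ would be negative there. ``Factoring out $\sin(\pi\alpha)$ and the $\beta$-dependent factor'' is only possible if the remainder of the \emph{difference} is written as a single kernel.

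That single kernel is what \cref{thm:Watson-lower-bound} supplies. $K_1$ and $K_2$ are built from $(e^s+1)^\alpha-R(s,\phi)^\alpha$ and $R(s,\phi)^{-\beta}-(e^s+1)^{-\beta}$, normalised by $2^\alpha-\rho^\alpha$ and $\rho^{-\beta}-2^{-\beta}$. After removing $\nu\sin(\pi\alpha)\sin(\pi\beta)\Gamma(1-\beta)\Gamma(1+\alpha)/(1-\alpha)$, everything stays bounded at the edges.

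\textbf{Uniformity as $\phi\to0$ is not automatic.} You assert a uniform $\mathcal O(1/\eta)$ relative remainder, but this needs an argument. $\widetilde{\mathsf L}=\mathcal O(\phi^{-\beta})$ for fixed $s$, and at $\alpha=\beta=0$ it has a nonzero limit along $s=c\phi$; the infimum $-0.0377\cdots$ is approached there. The paper handles this in \cref{lem:Lraw-bound} with two different normalisations of the kernel, one for $\beta<\frac14$ and one for $\beta\ge\frac14$. In the first case it obtains only an $\eta^{-1/2}$-type bound.

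Once these pieces are in place, the $\eta$-dependence is monotone with worst case $\eta=9$. The check then reduces to $\widetilde P(\alpha,\beta;\phi)$ on $[0,1]^2\times[0,\phi_0]$. No cutoff $\eta_1$, no separate $\eta\to\infty$ analysis, and no extra variable $1/n$ is needed.
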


Combining \cref{thm:Watson,thm:main-Meijer,thm:main-non-Meijer}, and using the equivalence \cref{eq03,eq05,eq06}, together with Brannan's proof of the case $n=3$ in \cite{Brannan:1974:OCP}, completes the verification of \cref{thm:Brannan}.

\begin{remark}
The cutoff value $9$ used to separate the two ranges of $\eta$ is a convenient integer choice. The estimates have some tolerance around this value, but larger or smaller integer values are not useful here: numerically we find that larger integer values cause the positivity verification in \cref{sec:case-I} to fail, and the bounds obtained in \cref{sec:case-II} break down when $\eta$ is too small.
\end{remark}

\begin{remark}
Analytical proofs for several bounds used in \cref{sec:case-I,sec:case-II} are given in \cref{sec:lemma-proofs}. Several steps in the proofs of \cref{thm:main-Meijer,thm:main-non-Meijer} reduce the problem to showing positivity of explicitly defined functions on compact parameter domains; the numerical procedures used to obtain these lower bounds are described in \cref{sec:numerics}. The section also explains how the Meijer functions employed in the bounded $\eta$ approximations were evaluated using Maple's built-in routines\footnote{Maple 2025.2. Maplesoft, a division of Waterloo Maple Inc., Waterloo, Ontario.}, and how these computations were checked independently by direct quadrature using \textup{MATLAB}\footnote{MATLAB R2025b. The MathWorks Inc., Natick, MA, USA.}.
\end{remark}

\section{Meijer \texorpdfstring{$G$}{} function approximations}
\label{sec:Meijer-approximations}
We derive uniform two-term approximations for the integral representations used to estimate $w_n(\alpha,\beta,e^{i\phi})$, valid for $\phi$ close to zero and also at $\phi=0$. In this section no restriction is imposed on the size of $\eta$; the specialisation to the range $\eta \in [0,9]$ will be made in \cref{sec:case-I}.

We shall use the following compound Laplace integral representation from \cite[Lemma 2.1]{Dunster:2026:GBC}, valid for general $x\in\mathbb{C}\setminus[0,\infty)$.
\begin{lemma}
\label{lem:wnLaplace}
Assume $\alpha,\beta \in [0,1]$, $1 \leq n \in \mathbb{N}$, and $x\in\mathbb{C}\setminus[0,\infty)$. Then
%%%%%%%%%%%%%%%%%
\begin{multline}
\label{eq08}
w_{n}(\alpha,\beta,x)=(-1)^{n+1}\sin(\pi\beta)
(-x)^{\,n-\alpha}\int_{0}^{\infty} 
(e^{s}-1)^{-\beta}\,(e^{s}-x)^{\alpha}e^{-ns}\,ds
\\
+ \sin(\pi\alpha)(-x)^{-\beta}
\int_{0}^{\infty} (e^{s}-1)^{\alpha}
\left(e^{s}-1/x\right)^{-\beta}e^{-ns} ds.
\end{multline}
%%%%%%%%%%%%%%%%%
When $\beta=1$ it is understood that the limit of the first term on the RHS applies.
\end{lemma}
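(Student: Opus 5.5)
The plan is to derive \cref{eq08} directly from the Cauchy coefficient formula for \cref{eq01}, using the relation \cref{eq03} in the form
\[
w_n(\alpha,\beta,x)=(-1)^{n+1}\pi\,\omega^{-n}A_n(\alpha,\beta,\omega)=(-1)^{n+1}\pi(-x)^{n}A_n(\alpha,\beta,-1/x).
\]
Set $\omega=-1/x$. For the moment assume $\alpha,\beta\in(0,1)$. Write
\[
A_n=\frac{1}{2\pi i}\oint_{|z|=r}\Bigl(1-\frac{z}{x}\Bigr)^{\alpha}(1-z)^{-\beta}z^{-n-1}\,dz ,\qquad 0<r<1 .
\]
The integrand has branch points at $z=1$ and $z=x$, where $|x|=1$ in the application. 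I take the principal branches, with cuts along the rays $z=e^{s}$ and $z=xe^{s}$ for $s\in[0,\infty)$. Because $x\notin[0,\infty)$, these two rays are disjoint. The integrand is $O(|z|^{\alpha-\beta-n-1})$ at infinity. Since $n\ge1$ and $\alpha-\beta\le1$, the exponent satisfies $\alpha-\beta-n-1<-1$, so the circle can be expanded to infinity. The contour then collapses onto the two keyhole contours around the cuts. Near the endpoints the integrand behaves like $(z-1)^{-\beta}$ and $(z-x)^{\alpha}$, and both are integrable because $\beta<1$ and $\alpha\ge0$. Hence the small circles around the endpoints contribute nothing in the limit.

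\emph{Cut through $z=1$.} Put $z=e^{s}$, so that $z^{-n-1}dz=e^{-ns}ds$. Since $(1-z/x)^{\alpha}$ is continuous across this cut, I factor it as $(-x)^{-\alpha}(e^{s}-x)^{\alpha}$. The factor $(1-z)^{-\beta}$ takes the values $(e^{s}-1)^{-\beta}e^{\pm i\pi\beta}$ on the two sides. The difference of the two sides, divided by $2\pi i$, therefore gives $\pm\pi^{-1}\sin(\pi\beta)$ times the corresponding Laplace integral. Multiplying by $(-1)^{n+1}\pi(-x)^n$ produces the first term of \cref{eq08}.

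\emph{Cut through $z=x$.} Put $z=xe^{s}$, so that $z^{-n-1}dz=x^{-n}e^{-ns}ds$. Now $(1-z)^{-\beta}=(-x)^{-\beta}(e^{s}-1/x)^{-\beta}$ is continuous across the cut, while $(1-e^{s})^{\alpha}$ jumps between the values $(e^{s}-1)^{\alpha}e^{\pm i\pi\alpha}$. This contributes $\pm\pi^{-1}\sin(\pi\alpha)$ times the second integral. After multiplying by $(-1)^{n+1}\pi(-x)^{n}$, the factor $x^{-n}(-x)^{n}=(-1)^n$ combines with $(-1)^{n+1}$. Together with the orientation of this keyhole, this should give exactly the sign-free second term of \cref{eq08}.

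\emph{Main obstacle.} The delicate step is the bookkeeping of signs and branches. Three things must be tracked consistently: the orientation of each keyhole relative to the original positively oriented circle, which side of each ray carries $e^{+i\pi\beta}$ or $e^{+i\pi\alpha}$, and the identities $(1-z/x)^{\alpha}=(-x)^{-\alpha}(z-x)^{\alpha}$ and $(1-z)^{-\beta}=(-x)^{-\beta}(1/x-z)^{-\beta}$ with principal branches. These identities hold because $\arg(-x)\in(-\pi,\pi)$ and the relevant arguments stay within a half-turn for $s\ge0$. I would fix the conventions by checking the sign against a simple case, for example $x=-1$ and $n=1$, where both sides can be computed in closed form.

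\emph{Boundary values of $\alpha,\beta$.} When $\alpha=0$ or $\beta=0$ the function is single-valued across the corresponding cut. That term of \cref{eq08} vanishes, consistent with its $\sin$ prefactor. The case $\beta=1$ is obtained by letting $\beta\to1^-$: both sides of \cref{eq08} are continuous in $\beta$. On the left, \cref{eq04} is a polynomial in $x$ with coefficients continuous in $\beta$. On the right, $\sin(\pi\beta)\int_0^1 s^{-\beta}\,ds$ has a finite limit, which matches the stated convention.
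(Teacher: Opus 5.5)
The paper gives no proof of this lemma; it quotes it from \cite[Lemma~2.1]{Dunster:2026:GBC}. Judged as a self-contained derivation, your contour-collapse argument is correct. The signs you left open do close up, so you should state them rather than fix them by a test case.

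\emph{The two cut contributions.} Keeping the domain on the left, the collapsed contour runs outward along the upper edge of $[1,\infty)$. There $\arg(1-z)=-\pi$, so $(1-z)^{-\beta}=(e^{s}-1)^{-\beta}e^{i\pi\beta}$. This cut therefore contributes $+\pi^{-1}\sin(\pi\beta)(-x)^{-\alpha}\int_0^\infty(e^{s}-1)^{-\beta}(e^{s}-x)^{\alpha}e^{-ns}\,ds$ to $A_n$. On the ray $xe^{s}$, the edge traversed outward is the one on the counterclockwise side, where $(1-z/x)^{\alpha}=(e^{s}-1)^{\alpha}e^{-i\pi\alpha}$. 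That cut contributes $-\pi^{-1}\sin(\pi\alpha)\,x^{-n}(-x)^{-\beta}\int_0^\infty(e^{s}-1)^{\alpha}(e^{s}-1/x)^{-\beta}e^{-ns}\,ds$, and the minus sign cancels against $(-1)^{n+1}(-x)^{n}x^{-n}=-1$.

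\emph{Branches and a check.} Your branch identities hold for every $x\in\mathbb{C}\setminus[0,\infty)$. For $\Im x\ge0$, the arguments of $-x$ and $e^{s}-x$ lie in $(-\pi,0]$ and that of $e^{s}-1/x$ lies in $[0,\pi)$; the case $\Im x<0$ is symmetric. For an independent sign check, the cases $\alpha=0$ and $\beta=0$ isolate the two cuts. Via Beta integrals they reproduce $A_n=(\beta)_n/n!$ and $A_n=\binom{\alpha}{n}(-1/x)^{n}$. This is cleaner than $x=-1$, $n=1$, where the individual integrals are generally not elementary.

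\emph{Small repairs.}
\begin{itemize}
\item The lemma is not restricted to $|x|=1$, so take $0<r<\min\{1,|x|\}$. \Cref{eq03} is available for all $\omega\ne0$, since both sides are polynomials in $\omega$.
\item In your obstacle paragraph, $(-x)^{-\beta}(1/x-z)^{-\beta}$ should read $(-x)^{-\beta}(z/x-1/x)^{-\beta}$, as in your cut computation.
\item The edge $\alpha=1$ is not mentioned, but it goes like $\alpha=0$. Here $1-z/x$ is entire, $\sin(\pi\alpha)$ kills the second term, and the decay exponent is $-\beta-n<-1$ for $\beta>0$.
\item The inference ``$\alpha-\beta\le1$ gives exponent $<-1$'' needs $\alpha-\beta<1$. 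At the single corner $(\alpha,\beta,n)=(1,0,1)$ the integrand is $(1-z/x)z^{-2}$. Its expansion at infinity contains $-z^{-1}/x$, so the large circle does not vanish. Both integrals in \cref{eq08} diverge there, and the right side is $0\cdot\infty$ while $w_1(1,0,x)=\pi$. This is a defect of the statement rather than of your method, and it is harmless here because the paper uses the lemma only with $\beta>0$. Still, your boundary discussion should exclude that corner.
\item For $\beta=1$, it is cleaner to argue that the left side and the second term of \cref{eq08} are continuous at $\beta=1$. The second term is continuous by dominated convergence, since $|e^{s}-1/x|$ is bounded away from zero for $s\ge0$. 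The first term therefore has a limit.
\end{itemize}
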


We first apply \cref{eq08} to the term $w_n(\alpha,\beta,-1)$ appearing on the LHS of \cref{eq06}, and then use the method of Watson's lemma (\cite[Chap.~3]{Olver:1997:ASF}, \cite[Chap.~2]{Temme:2015:AMF}, \cite[Chap.~5]{Wong:1989:AAI}) to obtain the leading large $n$ approximations with explicit integral remainder terms. For this purpose, define the kernels
%%%%%%%%%%%%%%%%%
\begin{equation}
\label{eq09}
\mathsf{K}_1(\alpha,\beta;s)
=\frac{1}{s^{2}}\left[
\left(\frac{s}{e^{s}-1}\right)^{\beta}
\left(\frac{e^{s}+1}{2}\right)^{\alpha}
-1-\frac{1}{2}(\alpha-\beta)s
\right],
\end{equation}
%%%%%%%%%%%%%%%%%
%%%%%%%%%%%%%%%%%
\begin{equation}
\label{eq10}
\mathsf{K}_2(\alpha,\beta;s)
=\frac{1}{s^{2}}\left[
\left(\frac{e^{s}-1}{s}\right)^{\alpha}
\left(\frac{2}{e^{s}+1}\right)^{\beta}
-1-\frac{1}{2}(\alpha-\beta)s
\right].
\end{equation}
%%%%%%%%%%%%%%%%%
Both $\mathsf K_1(\alpha,\beta;s)$ and $\mathsf K_2(\alpha,\beta;s)$ are $\mathcal O(1)$ as $s\to0$. Indeed, by Taylor expansions the first bracketed quantities in \cref{eq09,eq10} both have the leading Maclaurin expansion $1+\tfrac12(\alpha-\beta)s+\mathcal O(s^2)$, and hence subtracting these first two terms leaves an $\mathcal O(s^2)$ remainder.

Next, on setting $x=-1$ in \cref{eq08}, and using $n$ is odd, gives
%%%%%%%%%%%%%%%%%
\begin{multline}
\label{eq11}
w_n(\alpha,\beta,-1)
=
\sin(\pi\beta)\int_0^\infty
(e^s-1)^{-\beta}(e^s+1)^\alpha e^{-ns}\,d s
\\
+
\sin(\pi\alpha)\int_0^\infty
(e^s-1)^\alpha(e^s+1)^{-\beta}e^{-ns}\,d s .
\end{multline}
%%%%%%%%%%%%%%%%%
By the definitions \cref{eq09,eq10}, the two integrands may be written as
%%%%%%%%%%%%%%%%%
\begin{equation}
\label{eq12}
(e^s-1)^{-\beta}(e^s+1)^\alpha
=
2^\alpha s^{-\beta}
\left\{
1+\tfrac12(\alpha-\beta)s+s^2\mathsf K_1(\alpha,\beta;s)
\right\},
\end{equation}
%%%%%%%%%%%%%%%%%
and
%%%%%%%%%%%%%%%%%
\begin{equation}
\label{eq13}
(e^s-1)^\alpha(e^s+1)^{-\beta}
=
2^{-\beta}s^\alpha
\left\{
1+\tfrac12(\alpha-\beta)s+s^2\mathsf K_2(\alpha,\beta;s)
\right\}.
\end{equation}
%%%%%%%%%%%%%%%%%
Substituting \cref{eq12,eq13} into \cref{eq11} and using the elementary gamma-integral identity
%%%%%%%%%%%%%%%%%
\begin{equation}
\label{eq14}
\int_0^\infty t^{\mu}e^{-\lambda t} dt
=
\lambda^{-1-\mu}\Gamma(1+\mu)
\quad (\Re(\mu)>-1,\, \Re(\lambda)>0),
\end{equation}
%%%%%%%%%%%%%%%%%
gives, for odd positive integers $n$,
%%%%%%%%%%%%%%%%%
\begin{multline}
\label{eq15}
w_n(\alpha,\beta,-1)
=\sin(\pi\beta)\,2^\alpha\Gamma(1-\beta)\,n^{-1+\beta}
\left\{1+\frac{(\alpha-\beta)(1-\beta)}{2n}\right\}
\\
+\sin(\pi\alpha)\,2^{-\beta}\Gamma(1+\alpha)\,n^{-1-\alpha}
\left\{1+\frac{(\alpha-\beta)(1+\alpha)}{2n}\right\}
\\
+\sin(\pi\beta)\,2^\alpha\int_0^\infty s^{2-\beta}\mathsf{K}_1(\alpha,\beta;s)e^{-ns}\,d s
\\
+\sin(\pi\alpha)\,2^{-\beta}\int_0^\infty s^{2+\alpha}\mathsf{K}_2(\alpha,\beta;s)e^{-ns}\,d s.
\end{multline}
%%%%%%%%%%%%%%%%%
This is an asymptotic approximation in the sense that the first two terms dominate when $n \to \infty$.

Our main task is to obtain uniform asymptotic approximations for $w_{n}(\alpha,\beta,e^{i\phi})$ in a similar way, but which are valid for $\phi$ close to zero, including $\phi=0$. Although \cref{eq15} provides a Watson-type expansion for $w_n(\alpha,\beta,-1)$, the corresponding elementary Watson approximation for $w_n(\alpha,\beta,e^{i\phi})$ is not uniform as $\phi\to0$; see \cite{Dunster:2026:GBC}.

To this end, define
%%%%%%%%%%%%%%%%%
\begin{equation}
\label{eq16}
J_1(\alpha,\beta,\phi;n)=\int_0^\infty (e^s-1)^{-\beta}
R(s,\phi)^{\alpha}\,e^{-ns}\,d s,
\end{equation}
%%%%%%%%%%%%%%%%%
and
%%%%%%%%%%%%%%%%%
\begin{equation}
\label{eq17}
J_2(\alpha,\beta,\phi;n)=\int_0^\infty (e^s-1)^{\alpha}
R(s,\phi)^{-\beta}\,e^{-ns}\,d s,
\end{equation}
%%%%%%%%%%%%%%%%%
where
%%%%%%%%%%%%%%%%%
\begin{equation}
\label{eq18}
R(s,\phi)=\left|e^{s}-e^{\pm i \phi}\right|
=\sqrt{e^{2s}+1-2e^{s}\cos(\phi)}.
\end{equation}
%%%%%%%%%%%%%%%%%
Then, on taking absolute values in \cref{eq08} with $x=e^{i\phi}$, the following inequality given in \cite{Dunster:2026:GBC} is obtained, which is the foundation for the estimates in the rest of the paper: 
%%%%%%%%%%%%%%%%%
\begin{equation}
\label{eq19}
\bigl|w_n(\alpha,\beta,e^{i \phi})\bigr|
\le 
\sin(\pi\beta)J_1(\alpha,\beta,\phi;n)
+\sin(\pi\alpha)J_2(\alpha,\beta,\phi;n),
\end{equation}
%%%%%%%%%%%%%%%%%
valid for $\alpha,\beta\in(0,1]$, positive integers $n$, and $\phi\in[0,\pi]$. 

\begin{remark}
When $\beta=1$, the product $\sin(\pi\beta)J_1(\alpha,\beta,\phi;n)$ in \cref{eq19} is understood by continuous extension from $0<\beta<1$. More generally, throughout the paper, products containing removable endpoint singularities are interpreted by continuous extension from the interior parameter range. In addition, although $\phi=0$ (i.e. $x=1$) is excluded from \cref{lem:wnLaplace}, the inequality \cref{eq19} is understood as the limit as $\phi\to 0^+$. This limit is legitimate since $w_n(\alpha,\beta,x)$ is continuous at $x=1$, and, for $\alpha,\beta \in (0,1)$, the integrands of \cref{eq16,eq17} converge locally to integrable functions near $s=0$, with behaviour $\mathcal O(s^{\alpha-\beta})$. The endpoint cases are then obtained by continuous extension.
\end{remark}

We now approximate the integrands of \cref{eq16,eq17} for small $s$, uniformly for $\phi \in [0,\phi_{0}]$ ($\phi_{0}=0.061$). With this in mind, define
%%%%%%%%%%%%%%%%%
\begin{equation}
\label{eq20}
\Upsilon(s,\rho)
=R(s,\phi)^2
=(e^s-1)^2+\rho^2\,e^s,
\end{equation}
%%%%%%%%%%%%%%%%%
where for convenience we use the real variable
%%%%%%%%%%%%%%%%%
\begin{equation}
\label{eq21}
\rho:=R(0,\phi)=2\sin\left(\tfrac{1}{2}\phi\right)=\phi
+\mathcal{O}(\phi^{3})
\quad (\phi \to 0).
\end{equation}
%%%%%%%%%%%%%%%%%

Consider the integrand of \cref{eq16} first. We seek an approximation for small $s$ that is uniform for $\phi \ge 0$ ($\rho \ge 0$). Based on the approximation $\Upsilon(s,\rho)=s^{2}+\rho^{2}+O(s^{3})+O(\rho^{2}s)$ for small $s$ and $\rho$, we divide $\Upsilon(s,\rho)$ by the leading term $s^2+\rho^2$, and for fixed $\rho>0$ it can readily be shown that
%%%%%%%%%%%%%%%%%
\begin{equation*}
\frac{\Upsilon(s,\rho)}
{s^2+\rho^2}
=
1+s+\frac{1}{2}s^2+\frac{1}{6}s^3
+\frac{2+\rho^2}{24\rho^2}s^4
+\mathcal{O}\left(s^{5}\right)
\quad (s \to 0).
\end{equation*}
%%%%%%%%%%%%%%%%%
We shall use the first two terms in this and claim that these provide a uniform approximation for bounded $\rho \ge 0$; it can be seen that this is certainly not true if we take more than three terms. 

To verify this, define $e(s,\rho)$ by
%%%%%%%%%%%%%%%%%
\begin{equation}
\label{eq23}
\frac{\Upsilon(s,\rho)}
{s^2+\rho^2}
=
1+s+e(s,\rho) s^2.
\end{equation}
%%%%%%%%%%%%%%%%%
Now using $(e^s-1)^2-s^2-s^3=\frac{7}{12}s^{4}+\mathcal{O}(s^{5})$ and $e^s-1-s=\frac{1}{2}s^{2}+\mathcal{O}(s^{3})$ as $s \to 0$, we observe from \cref{eq20,eq23} that
%%%%%%%%%%%%%%%%%
\begin{multline}
\label{eq24}
e(s,\rho)
=\frac{(e^s-1)^2-s^2-s^3}{s^{2}(s^{2}+\rho^2)}
+\rho^2\frac{e^s-1-s}{s^{2}(s^{2}+\rho^2)}
\\
=\left\{\frac{7s^{2}}{12(s^{2}+\rho^2)}
+\frac{\rho^2}{2(s^{2}+\rho^2)}\right\}
\left\{1+\mathcal{O}(s)\right\}
=\mathcal{O}(1),
\end{multline}
%%%%%%%%%%%%%%%%%
as $s \to 0$ uniformly for bounded $\rho \ge 0$, and in particular for $\phi\in[0,\phi_0]$ ($\rho\in[0,2\sin(\tfrac{1}{2}\phi_0)]$). Hence from \cref{eq23,eq24} we obtain our desired approximation
%%%%%%%%%%%%%%%%%
\begin{equation*}
\left(\frac{\Upsilon(s,\rho)}
{s^2+\rho^2}\right)^{\alpha/2}
=
1+\frac{1}{2}\alpha s+\mathcal{O}(s^2)
\quad (s\to 0,\, \phi\in[0,\phi_0]).
\end{equation*}
%%%%%%%%%%%%%%%%%

Next
%%%%%%%%%%%%%%%%%
\begin{equation*}
\left(\frac{s}{e^s-1}\right)^\beta
=1-\frac{1}{2}\beta s+\mathcal{O}(s^2)
\quad (s\to0),
\end{equation*}
%%%%%%%%%%%%%%%%%
and therefore
%%%%%%%%%%%%%%%%%
\begin{equation}
\label{eq27}
\left(\frac{s}{e^{s}-1}\right)^{\beta}
\left(\frac{\Upsilon(s,\rho)}{s^2+\rho^2}\right)^{\alpha/2}
=
p(\alpha,\beta;s)
+s^2\widetilde{\mathsf{K}}_1(\alpha,\beta,\rho;s),
\end{equation}
%%%%%%%%%%%%%%%%%
where
%%%%%%%%%%%%%%%%%
\begin{equation}
\label{eq28}
p(\alpha,\beta;s)
=1+\tfrac{1}{2}(\alpha-\beta)s,
\end{equation}
%%%%%%%%%%%%%%%%%
and 
%%%%%%%%%%%%%%%%%
\begin{equation}
\label{eq29}
\widetilde{\mathsf{K}}_1(\alpha,\beta,\rho;s)
=
\frac{1}{s^{2}}\left[
\left(\frac{s}{e^{s}-1}\right)^{\beta}
\left(\frac{\Upsilon(s,\rho)}{s^2+\rho^2}\right)^{\alpha/2}
-p(\alpha,\beta;s)
\right]
=\mathcal{O}(1),
\end{equation}
%%%%%%%%%%%%%%%%%
as $s \to 0$ uniformly for $\phi\in[0,\phi_0]$. Then, for \cref{eq16}, we have from \cref{eq20,eq28,eq27}
%%%%%%%%%%%%%%%%%
\begin{multline}
\label{eq30}
(e^s-1)^{-\beta}R(s,\phi)^\alpha
=(e^s-1)^{-\beta}\Upsilon(s,\rho)^{\alpha/2}
=s^{-\beta}(s^2+\rho^2)^{\alpha/2}
\\
+\tfrac{1}{2}(\alpha-\beta)s^{1-\beta}(s^2+\rho^2)^{\alpha/2}
+s^{2-\beta}(s^2+\rho^2)^{\alpha/2}
\widetilde{\mathsf{K}}_1(\alpha,\beta,\rho;s).
\end{multline}
%%%%%%%%%%%%%%%%%

Similarly, with \cref{eq17} in mind, it is straightforward to show that
%%%%%%%%%%%%%%%%%
\begin{equation}
\label{eq31}
\left(\frac{e^{s}-1}{s}\right)^{\alpha}
\left(\frac{s^2+\rho^2}{\Upsilon(s,\rho)}\right)^{\beta/2}
=
p(\alpha,\beta;s)
+s^2\widetilde{\mathsf{K}}_2(\alpha,\beta,\rho;s),
\end{equation}
%%%%%%%%%%%%%%%%%
where
%%%%%%%%%%%%%%%%%
\begin{equation}
\label{eq32}
\widetilde{\mathsf{K}}_2(\alpha,\beta,\rho;s)
=
\frac{1}{s^{2}}\left[
\left(\frac{e^{s}-1}{s}\right)^{\alpha}
\left(\frac{s^2+\rho^2}{\Upsilon(s,\rho)}\right)^{\beta/2}
-p(\alpha,\beta;s)
\right]
=\mathcal{O}(1),
\end{equation}
%%%%%%%%%%%%%%%%%
as $s \to 0$, uniformly for $\phi\in[0,\phi_0]$. Therefore from \cref{eq20,eq28,eq31}
%%%%%%%%%%%%%%%%%
\begin{multline}
\label{eq33}
(e^s-1)^{\alpha}R(s,\phi)^{-\beta}
=(e^s-1)^{\alpha}\Upsilon(s,\rho)^{-\beta/2}
=s^\alpha(s^2+\rho^2)^{-\beta/2}
\\
+\tfrac{1}{2}(\alpha-\beta)s^{1+\alpha}
(s^2+\rho^2)^{-\beta/2}
+s^{2+\alpha}(s^2+\rho^2)^{-\beta/2}
\widetilde{\mathsf{K}}_2(\alpha,\beta,\rho;s).
\end{multline}
%%%%%%%%%%%%%%%%%
Thus from \cref{eq16,eq17,eq30,eq33} we arrive at the uniform approximations
%%%%%%%%%%%%%%%%%
\begin{multline}
\label{eq34}
J_1(\alpha,\beta,\phi;n)
=
\hat M_{1,0}(\alpha,\beta,\rho;n)
+\tfrac{1}{2}(\alpha-\beta)
\hat M_{1,1}(\alpha,\beta,\rho;n)
\\
+\int_0^\infty s^{2-\beta}(s^2+\rho^2)^{\alpha/2}
\widetilde{\mathsf{K}}_1(\alpha,\beta,\rho;s)
e^{-ns}\,d s,
\end{multline}
%%%%%%%%%%%%%%%%%
and
%%%%%%%%%%%%%%%%%
\begin{multline}
\label{eq35}
J_2(\alpha,\beta,\phi;n)
=
\hat M_{2,0}(\alpha,\beta,\rho;n)
+\tfrac{1}{2}(\alpha-\beta)
\hat M_{2,1}(\alpha,\beta,\rho;n)
\\
+\int_0^\infty s^{2+\alpha}(s^2+\rho^2)^{-\beta/2}
\widetilde{\mathsf{K}}_2(\alpha,\beta,\rho;s)e^{-ns}\,d s,
\end{multline}
%%%%%%%%%%%%%%%%%
where
%%%%%%%%%%%%%%%%%
\begin{equation*}
\hat M_{1,k}(\alpha,\beta,\rho;n)
=
\int_0^\infty s^{k-\beta}
(s^2+\rho^2)^{\alpha/2}e^{-ns}\,d s
\quad (k=0,1),
\end{equation*}
%%%%%%%%%%%%%%%%%
and
%%%%%%%%%%%%%%%%%
\begin{equation*}
\hat M_{2,k}(\alpha,\beta,\rho;n)
=
\int_0^\infty s^{k+\alpha}
(s^2+\rho^2)^{-\beta/2}e^{-ns}\,d s
\quad (k=0,1).
\end{equation*}
%%%%%%%%%%%%%%%%%

We find it convenient to re-express these integrals. Thus, let
%%%%%%%%%%%%%%%%%
\begin{equation}
\label{eq38}
\zeta:=n^2\rho^2
=4 n^2 \sin^2\left(\tfrac{1}{2}\phi\right),
\end{equation}
%%%%%%%%%%%%%%%%%
and then, after the change of variable $t=ns$, they become
%%%%%%%%%%%%%%%%%
\begin{equation}
\label{eq39}
\hat M_{j,k}(\alpha,\beta,\rho;n)
=
n^{-1-k-\alpha+\beta}M_{j,k}(\alpha,\beta;\zeta)
\quad (j=1,2,\,k=0,1),
\end{equation}
%%%%%%%%%%%%%%%%%
where
%%%%%%%%%%%%%%%%%
\begin{equation}
\label{eq40}
M_{1,0}(\alpha,\beta;\zeta)
=
\int_0^\infty t^{-\beta}(t^2+\zeta)^{\alpha/2}e^{-t}\,d t,
\end{equation}
%%%%%%%%%%%%%%%%%
%%%%%%%%%%%%%%%%%
\begin{equation}
\label{eq41}
M_{1,1}(\alpha,\beta;\zeta)
=
\int_0^\infty t^{1-\beta}(t^2+\zeta)^{\alpha/2}e^{-t}\,d t,
\end{equation}
%%%%%%%%%%%%%%%%%
%%%%%%%%%%%%%%%%%
\begin{equation}
\label{eq42}
M_{2,0}(\alpha,\beta;\zeta)
=
\int_0^\infty t^{\alpha}(t^2+\zeta)^{-\beta/2}e^{-t}\,d t,
\end{equation}
%%%%%%%%%%%%%%%%%
and
%%%%%%%%%%%%%%%%%
\begin{equation}
\label{eq43}
M_{2,1}(\alpha,\beta;\zeta)
=
\int_0^\infty t^{1+\alpha}(t^2+\zeta)^{-\beta/2}e^{-t}\,d t.
\end{equation}
%%%%%%%%%%%%%%%%%
From \cref{eq34,eq35,eq39} we arrive at our desired representations.
\begin{lemma}
\label{lem:J-Meijer}
Let $\alpha,\beta\in[0,1]$, $n\ge1$, and $\phi\in[0,\pi]$. Then
%%%%%%%%%%%%%%%%%
\begin{multline}
\label{eq44}
J_1(\alpha,\beta,\phi;n)
=
n^{-1-\alpha+\beta}M_{1,0}(\alpha,\beta;\zeta)
+\tfrac{1}{2}(\alpha-\beta)n^{-2-\alpha+\beta}M_{1,1}(\alpha,\beta;\zeta)
\\
+\int_0^\infty s^{2-\beta}(s^2+\rho^2)^{\alpha/2}
\widetilde{\mathsf{K}}_1(\alpha,\beta,\rho;s)
e^{-ns}\,d s,
\end{multline}
%%%%%%%%%%%%%%%%%
and
%%%%%%%%%%%%%%%%%
\begin{multline}
\label{eq45}
J_2(\alpha,\beta,\phi;n)
=
n^{-1-\alpha+\beta}M_{2,0}(\alpha,\beta;\zeta)
+\tfrac{1}{2}(\alpha-\beta)n^{-2-\alpha+\beta}M_{2,1}(\alpha,\beta;\zeta)
\\
+\int_0^\infty s^{2+\alpha}(s^2+\rho^2)^{-\beta/2}
\widetilde{\mathsf{K}}_2(\alpha,\beta,\rho;s)
e^{-ns}\,d s.
\end{multline}
%%%%%%%%%%%%%%%%%
\end{lemma}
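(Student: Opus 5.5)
The proof is an exact algebraic identity. It combines the decompositions \cref{eq30,eq33} with the scaling \cref{eq39}, and then checks that every integral involved converges.

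\emph{Integrand identities.} By \cref{eq18,eq20}, $R(s,\phi)^2=\Upsilon(s,\rho)$ holds exactly for all $s\ge0$, $\phi\in[0,\pi]$, with $\rho=2\sin(\tfrac12\phi)$. We use \cref{eq29} as the \emph{definition} of $\widetilde{\mathsf K}_1$ for $s>0$; it is well defined because $s^2+\rho^2>0$ and $\Upsilon>0$ there. Multiplying \cref{eq27} through by $s^{-\beta}(s^2+\rho^2)^{\alpha/2}$ then gives \cref{eq30} identically in $s>0$. This is simply a rearrangement, and it requires no smallness of $\phi$. In the same way, \cref{eq32} defines $\widetilde{\mathsf K}_2$, and multiplying \cref{eq31} by $s^{\alpha}(s^2+\rho^2)^{-\beta/2}$ yields \cref{eq33}. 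When $\rho=0$ (that is, $\phi=0$), the factor $(\Upsilon/(s^2+\rho^2))$ reduces to $((e^s-1)/s)^2$ for $s>0$, and the same identities hold.

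\emph{Integration.} Multiply \cref{eq30} by $e^{-ns}$ and integrate over $(0,\infty)$. Splitting the integral into three pieces is legitimate once each piece is shown to converge absolutely.
\begin{itemize}
\item Near $s=0$, the first two pieces behave like $s^{-\beta}\rho^{\alpha}$ or $s^{\alpha-\beta}$, with $\beta<1$ (the case $\beta=1$ is taken by continuous extension, as in the Remark). At infinity they have at most polynomial growth, which $e^{-ns}$ controls.
\item The third piece is the difference of the full integrand, which is integrable as noted in the Remark after \cref{eq19}, and the first two pieces. It is therefore integrable as well.
\end{itemize}
This gives \cref{eq34}, and \cref{eq35} follows in the same way. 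For the $J_2$ case, note that $R^{-\beta}$ is integrable near $0$ when $\rho=0$, since $(e^s-1)^{\alpha}R^{-\beta}\sim s^{\alpha-\beta}$.

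\emph{Scaling.} In each $\hat M_{j,k}$ substitute $s=t/n$. This gives
\[
s^{k-\beta}(s^2+\rho^2)^{\alpha/2}\,ds=n^{-1-k+\beta-\alpha}\,t^{k-\beta}(t^2+n^2\rho^2)^{\alpha/2}\,dt,
\]
and the analogous relation for $j=2$. Setting $\zeta=n^2\rho^2$ as in \cref{eq38} yields \cref{eq39}, with $M_{j,k}$ given by \cref{eq40,eq41,eq42,eq43}. Substituting these into \cref{eq34,eq35} produces \cref{eq44,eq45}.

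\emph{Main point requiring care.} The substantive issue is that the lemma claims validity for all $\phi\in[0,\pi]$, whereas the derivation above was motivated only for small $\phi$. The resolution is that the $\mathcal O(1)$ bounds of \cref{eq29,eq32} are never used here. Only the definitions and absolute convergence matter, and these hold for every $\phi$. What remains is handling the endpoint parameters $\alpha,\beta\in\{0,1\}$ and $\phi=0$, which is done by the continuous-extension convention together with dominated convergence.
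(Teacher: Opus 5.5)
Your proposal is correct and follows the paper's own route. \cref{eq30,eq33} are exact rearrangements of the definitions \cref{eq29,eq32}, valid for every $\phi\in[0,\pi]$ as you note; integrating against $e^{-ns}$ gives \cref{eq34,eq35}, and the substitution $t=ns$ gives \cref{eq39}, which the paper compresses into the single line ``From \cref{eq34,eq35,eq39} we arrive at our desired representations.'' One small correction to your convergence check: the Remark after \cref{eq19} only covers $s\to0$, while at $s=\infty$ the full integrands behave like $e^{-(n-\alpha+\beta)s}$. That is integrable except in the degenerate case $n=1$, $\alpha=1$, $\beta=0$, where both sides of \cref{eq44,eq45} are $+\infty$ and the identity holds only in the extended sense.
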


From \cref{eq14,eq40,eq41,eq42,eq43} it is straightforward to show that
%%%%%%%%%%%%%%%%%
\begin{equation}
\label{eq46}
M_{j,k}(\alpha,\beta;0)
=\Gamma(1+k+\alpha-\beta)
\quad
(j=1,2,\, k=0,1).
\end{equation}
%%%%%%%%%%%%%%%%%
In our application $\zeta$ will be bounded above by 81, but the following slowly-varying behaviour as $\zeta\to\infty$ is also worth noting: 
%%%%%%%%%%%%%%%%%
\begin{equation}
\label{eq47}
M_{1,k}(\alpha,\beta;\zeta)
=
\Gamma(1+k-\beta)\,\zeta^{\alpha/2}
+\mathcal{O}(\zeta^{\alpha/2-1}),
\end{equation}
%%%%%%%%%%%%%%%%%
and
%%%%%%%%%%%%%%%%%
\begin{equation}
\label{eq48}
M_{2,k}(\alpha,\beta;\zeta)
=
\Gamma(1+k+\alpha)\,\zeta^{-\beta/2}
+\mathcal{O}(\zeta^{-\beta/2-1}),
\end{equation}
%%%%%%%%%%%%%%%%%
for $k=0,1$. Using \cref{eq14}, we also observe the following special cases:
%%%%%%%%%%%%%%%%%
\begin{equation}
M_{1,k}(0,\beta;\zeta)
=\Gamma(1+k-\beta),
\end{equation}
%%%%%%%%%%%%%%%%%
and
%%%%%%%%%%%%%%%%%
\begin{equation}
\label{eq50}
M_{2,k}(\alpha,0;\zeta)
=\Gamma(1+k+\alpha),
\end{equation}
%%%%%%%%%%%%%%%%%
for $k=0,1$.

The integrals \cref{eq40,eq41,eq42,eq43} may be expressed in terms of known special functions. In particular, \cite[p.~369, formula~2.7]{Erdelyi:1954:TI2} gives
%%%%%%%%%%%%%%%%%
\begin{equation}
\label{eq51}
M_{1,k}(\alpha,\beta;\zeta)
=
\frac{\zeta^{(1+k+\alpha-\beta)/2}}
{2\sqrt{\pi}\,\Gamma\left(-\tfrac12 \alpha\right)}
\,G^{3,1}_{1,3}\left(
\frac{\zeta}{4}\,\middle|\,
\begin{matrix}
\frac{1}{2}(1-k+\beta)\\[1mm]
-\frac{1}{2}(1+k+\alpha-\beta),\,0,\,\frac12
\end{matrix}
\right)
\quad (k=0,1),
\end{equation}
%%%%%%%%%%%%%%%%%
and
%%%%%%%%%%%%%%%%%
\begin{equation}
\label{eq52}
M_{2,k}(\alpha,\beta;\zeta)
=
\frac{\zeta^{(1+k+\alpha-\beta)/2}}
{2\sqrt{\pi}\,\Gamma\left(\frac12\beta\right)}
\,G^{3,1}_{1,3}\left(
\frac{\zeta}{4}\,\middle|\,
\begin{matrix}
\frac{1}{2}(1-k-\alpha)\\[1mm]
-\frac{1}{2}(1+k+\alpha-\beta),\,0,\,\frac12
\end{matrix}
\right)
\quad (k=0,1),
\end{equation}
%%%%%%%%%%%%%%%%%
where the special case of Meijer $G$ function is given here by
%%%%%%%%%%%%%%%%%
\begin{equation}
\label{eq53}
G_{1,3}^{\,3,1}\left(z\,\middle|
\,\begin{matrix} a\\ b_1,b_2,b_3 \end{matrix}\right)
=\frac{1}{2\pi i}\int_\mathcal{L}
\Gamma(b_1-s)\Gamma(b_2-s)\Gamma(b_3-s)
\Gamma(1-a+s)\,z^{\,s}\,d s,
\end{equation}
%%%%%%%%%%%%%%%%%
with $\mathcal{L}$ being a contour from $-i\infty$ to $+i\infty$ that separates the poles of $\Gamma(b_1-s)$, $\Gamma(b_2-s)$, and $\Gamma(b_3-s)$ from those of $\Gamma(1-a+s)$. It also satisfies the third order linear differential equation
%%%%%%%%%%%%%%%%%
\begin{equation}
\left\{ \left(z\frac{d}{dz}-b_1\right)
\left(z\frac{d}{dz}-b_2\right)\left(z\frac{d}{dz}
-b_3\right)+z\left(z\frac{d}{dz}
-a+1\right) \right\}y(z)=0.
\end{equation}
%%%%%%%%%%%%%%%%%

In the present application the Meijer $G$ functions are evaluated only at bounded real arguments $\frac{1}{4}\zeta\in[0,\frac{81}{4}]$, and with small real parameters. In this regime they are readily computable, since they are slowly varying, with no oscillatory complex arguments or branch-cut difficulties, and with defining integrals \cref{eq40,eq41,eq42,eq43} having exponentially decaying tails. For further details on their numerical evaluation, see \cref{sec:numerics}.

\section{\texorpdfstring{Proof of \cref{thm:main-Meijer}}{Proof of Case I Theorem}}
\label{sec:case-I}

We first record a general lower bound, obtained from \cref{eq15,eq19,eq44,eq45}.

\begin{lemma}
\label{lem:Meijer-bound}
Let $\alpha,\beta\in(0,1]$, $n=1,3,5,\ldots$, and $\phi \in [0,\pi]$. Then
%%%%%%%%%%%%%%%%%
\begin{equation}
\label{eq55}
w_n(\alpha,\beta,-1)-\bigl|w_n(\alpha,\beta,
e^{i \phi})\bigr|
\ge
H_n(\alpha,\beta;\zeta)
+
\int_0^\infty \mathsf{L}
(\alpha,\beta,\rho;s)e^{-ns}\,d s,
\end{equation}
%%%%%%%%%%%%%%%%%
where
%%%%%%%%%%%%%%%%%
\begin{multline}
\label{eq56}
H_n(\alpha,\beta;\zeta)
=
\sin(\pi\beta)\Biggl[
2^\alpha\Gamma(1-\beta)\,n^{-1+\beta}
\left\{1+\frac{(\alpha-\beta)(1-\beta)}{2n}\right\}
\\
-\,n^{-1-\alpha+\beta}M_{1,0}(\alpha,\beta;\zeta)
-\tfrac{1}{2}(\alpha-\beta)n^{-2-\alpha+\beta}M_{1,1}(\alpha,\beta;\zeta)
\Biggr]
\\
+\sin(\pi\alpha)\Biggl[
2^{-\beta}\Gamma(1+\alpha)\,n^{-1-\alpha}
\left\{1+\frac{(\alpha-\beta)(1+\alpha)}{2n}\right\}
\\
-\,n^{-1-\alpha+\beta}M_{2,0}(\alpha,\beta;\zeta)
-\tfrac{1}{2}(\alpha-\beta)n^{-2-\alpha+\beta}M_{2,1}(\alpha,\beta;\zeta)
\Biggr],
\end{multline}
%%%%%%%%%%%%%%%%%
and
%%%%%%%%%%%%%%%%%
\begin{multline}
\label{eq57}
\mathsf{L}(\alpha,\beta,\rho;s)
=\sin(\pi\beta)\Bigl[
2^\alpha s^{2-\beta}\mathsf{K}_1(\alpha,\beta;s)
-s^{2-\beta}(s^2+\rho^2)^{\alpha/2}\widetilde{\mathsf{K}}_1(\alpha,\beta,\rho;s)
\Bigr]
\\
+\sin(\pi\alpha)\Bigl[
2^{-\beta}s^{2+\alpha}\mathsf{K}_2(\alpha,\beta;s)
-s^{2+\alpha}(s^2+\rho^2)^{-\beta/2}\widetilde{\mathsf{K}}_2(\alpha,\beta,\rho;s)
\Bigr].
\end{multline}
%%%%%%%%%%%%%%%%%
Here $\zeta=n^2\rho^2$, $\rho=2\sin(\frac12 \phi)$, with $M_{1,0}(\alpha,\beta;\zeta)$, $M_{1,1}(\alpha,\beta;\zeta)$, $M_{2,0}(\alpha,\beta;\zeta)$, \\ $M_{2,1}(\alpha,\beta;\zeta)$, $\mathsf{K}_1(\alpha,\beta;s)$, $\mathsf{K}_2(\alpha,\beta;s)$, $\widetilde{\mathsf{K}}_1(\alpha,\beta,\rho;s)$, and $\widetilde{\mathsf{K}}_2(\alpha,\beta,\rho;s)$ given by \cref{eq40,eq41,eq42,eq43,eq09,eq10,eq29,eq32}, respectively.
\end{lemma}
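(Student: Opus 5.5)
The bound \cref{eq55} follows by combining three facts already established: the exact representation \cref{eq15} for the left-hand term, the triangle-inequality bound \cref{eq19} for the modulus term, and the exact decompositions \cref{eq44,eq45} of $J_1$ and $J_2$ from \cref{lem:J-Meijer}. The plan is to write $w_n(\alpha,\beta,-1)$ exactly via \cref{eq15}, bound $|w_n(\alpha,\beta,e^{i\phi})|$ from above via \cref{eq19}, substitute \cref{eq44,eq45} into that upper bound, and then subtract.

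First, since $n$ is odd, \cref{eq15} holds as an identity. It expresses $w_n(\alpha,\beta,-1)$ as the two explicit gamma-function terms plus the two remainder integrals, which involve $\mathsf K_1$ and $\mathsf K_2$. Second, \cref{eq19} gives
\[
-\bigl|w_n(\alpha,\beta,e^{i\phi})\bigr|\ge -\sin(\pi\beta)J_1(\alpha,\beta,\phi;n)-\sin(\pi\alpha)J_2(\alpha,\beta,\phi;n),
\]
which is valid for $\phi\in[0,\pi]$, with $\phi=0$ understood as a limit as in the preceding remark. Third, we insert \cref{eq44,eq45}. These identities come from the exact algebraic splittings \cref{eq30,eq33}, so they hold for every $\rho\ge0$ with no uniformity requirement. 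The $n^{-1-\alpha+\beta}M_{j,0}$ and $\tfrac12(\alpha-\beta)n^{-2-\alpha+\beta}M_{j,1}$ terms then combine with the explicit terms of \cref{eq15} to give exactly $H_n(\alpha,\beta;\zeta)$ in \cref{eq56}, once the $\sin(\pi\beta)$ and $\sin(\pi\alpha)$ groups are collected. The four remainder integrals all carry the common factor $e^{-ns}$. Merging them into a single integral, with the $\mathsf K$ remainders entering positively and the $\widetilde{\mathsf K}$ remainders negatively, yields $\int_0^\infty\mathsf L(\alpha,\beta,\rho;s)e^{-ns}\,ds$ with $\mathsf L$ as in \cref{eq57}.

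The main point requiring care is that all the integrals being combined converge separately, so that linearity of the integral is legitimate. Near $s=0$, the kernels $\mathsf K_j$ and $\widetilde{\mathsf K}_j$ are $\mathcal O(1)$, and the powers $s^{2-\beta}$, $s^{2+\alpha}$ and $(s^2+\rho^2)^{\pm}$ are integrable. As $s\to\infty$, the $\mathsf K_j$ and $\widetilde{\mathsf K}_j$ grow at most like $e^{(\alpha+\beta)s}s^{-2}$ times powers of $s$. Since $\alpha+\beta\le2<n$ whenever $n\ge3$, the factor $e^{-ns}$ dominates. The case $n=1$ needs a separate check: the individual pieces then still converge because the original integrands in \cref{eq11,eq16,eq17} behave like $e^{(\alpha-\beta-1)s}$, and the polynomial pieces $p$ are integrated against $e^{-s}$.

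The endpoint cases $\beta=1$ (where $\sin(\pi\beta)\Gamma(1-\beta)$ must be read as a limit) and $\phi=0$ are handled by continuous extension from the interior, as stipulated in the remark following \cref{eq19}. Since every step above is either an identity or a single inequality, the bound passes to these limits unchanged.
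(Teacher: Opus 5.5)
Your proposal is correct and follows the paper's own route: \cref{eq55} is obtained by subtracting the bound \cref{eq19} from the exact expansion \cref{eq15}, with \cref{eq44,eq45} substituted for $J_1$ and $J_2$, and collecting the terms into $H_n$ and $\mathsf L$. Your extra checks on separate convergence and on the endpoint cases $\beta=1$, $\phi=0$ agree with the paper's continuous-extension convention; incidentally, the remainder kernels grow only like $e^{[\alpha-\beta]_+ s}$ times powers of $s$, with $[\alpha-\beta]_+<1$ since $\beta>0$, so a single estimate also covers $n=1$.
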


\begin{remark}
The convention concerning continuous extension at endpoint parameter values, stated after \cref{eq19}, applies in particular to products such as $\sin(\pi\beta)\Gamma(1-\beta)$ and $\sin(\pi\beta)M_{1,k}(\alpha,\beta;\zeta)$.
\end{remark}

For the remainder of this section we restrict to the theorem range $\phi \in [0,\phi_0]$ and $\eta \in [0,9]$, and record a number of preliminary results to obtain the desired positivity of the RHS of \cref{eq55} under these constraints. The first is a lower bound for the remainder term, with the proof given in \cref{sec:lemma-proofs}.

\begin{lemma}
\label{lem:R-bound}
For $\alpha,\beta \in (0,1]$, $\phi \in [0,\phi_0]$ and $n>1$
%%%%%%%%%%%%%%%%%
\begin{equation}
\label{eq58}
\int_0^\infty \mathsf{L}(\alpha,\beta,\rho;s)e^{-ns}\,d s
>
-0.0254\,\Gamma(2-\beta)
\sigma(\alpha,\beta)(n-1)^{-2+\beta},
\end{equation}
%%%%%%%%%%%%%%%%%
where
%%%%%%%%%%%%%%%%%
\begin{equation}
\label{eq59}
\sigma(\alpha,\beta)
=\frac{\alpha\beta 
(\alpha+\beta)
\left\{\sin(\pi\alpha)+\sin(\pi\beta)\right\}}
{\alpha(1-\alpha)+\beta(1-\beta)}.
\end{equation}
%%%%%%%%%%%%%%%%%
\end{lemma}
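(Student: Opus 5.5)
We need a lower bound for $\int_0^\infty \mathsf L e^{-ns}\,ds$ of the form $-c\,\Gamma(2-\beta)\sigma(\alpha,\beta)(n-1)^{-2+\beta}$. The natural route is a pointwise lower bound of the form
\begin{equation*}
\mathsf{L}(\alpha,\beta,\rho;s)\ge -c\,\sigma(\alpha,\beta)\,s^{1-\beta}e^{s}
\end{equation*}
for all $s>0$ and $\rho\in[0,2\sin(\tfrac12\phi_0)]$. Integrating this against $e^{-ns}$ and using \cref{eq14} with $\mu=1-\beta$ and $\lambda=n-1$ gives exactly $-c\,\Gamma(2-\beta)\sigma(\alpha,\beta)(n-1)^{-2+\beta}$. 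The shifted exponent $n-1$ in the statement strongly suggests that the weight $e^{s}$ is the one to absorb the growth of the kernels at large $s$.

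First I would rewrite $\mathsf L$ without the singular factors $s^{-2}$, using \cref{eq09,eq10,eq29,eq32}. By \cref{eq12,eq30}, the $\sin(\pi\beta)$ bracket equals
\begin{equation*}
(e^s-1)^{-\beta}\bigl\{(e^s+1)^\alpha-R(s,\phi)^\alpha\bigr\}
-s^{-\beta}\bigl\{2^\alpha-(s^2+\rho^2)^{\alpha/2}\bigr\}p(\alpha,\beta;s).
\end{equation*}
The $\sin(\pi\alpha)$ bracket is the analogous difference with the factors $(\cdot)^{-\beta}$. The factor $\sigma$ should come from the vanishing structure. When $\alpha\to0$ or $\beta\to0$, the combination $\mathsf K_1$ versus $\widetilde{\mathsf K}_1$ degenerates: for $\alpha=0$ the first bracket vanishes identically and the second carries $\sin(\pi\alpha)$, and symmetrically for $\beta=0$. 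This yields the factor $\alpha\beta$ together with $\sin(\pi\alpha)+\sin(\pi\beta)$. The factor $(\alpha+\beta)/(\alpha(1-\alpha)+\beta(1-\beta))$ is designed to remain bounded while tracking the degeneration near $\alpha,\beta\to1$, where $\sin(\pi\alpha)\Gamma(\cdot)$ products must be handled by continuous extension.

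Concretely, I would write $\mathsf L=\sin(\pi\beta)\,\mathsf L_1+\sin(\pi\alpha)\,\mathsf L_2$. The target is then to show that
\begin{equation*}
Q(\alpha,\beta,\rho;s)=\frac{\mathsf{L}(\alpha,\beta,\rho;s)\,s^{\beta-1}e^{-s}}{\sigma(\alpha,\beta)}
\end{equation*}
is bounded below by $-0.0254$. The work splits into three regimes.
\begin{itemize}
\item[(i)] For small $s$, use Taylor expansions in $s$ and $\rho$, in the style of \cref{eq24}. These show $Q$ is bounded uniformly, and $s^{\beta-1}$ at least handles the $s^{2-\beta}$ prefactors with room to spare.
\item[(ii)] For large $s$, say $s\ge s_0$, the terms are explicit. $\mathsf K_1$ and $\widetilde{\mathsf K}_1$ grow at most like $e^{(\alpha-\beta)s}s^{\beta-2}$ times bounded factors, and the polynomial parts of $p$ are dominated by $e^{s}$. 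Elementary inequalities then give crude but sufficient bounds, which are where the $(n-1)$ comes from.
\item[(iii)] On the compact remaining set $(\alpha,\beta,\rho,s)\in[0,1]^2\times[0,\rho_0]\times[0,s_0]$, extend $Q$ continuously, including the limits $\alpha,\beta\to0,1$ via l'Hôpital-type expansions. Then bound its minimum numerically on a grid, with a Lipschitz or derivative bound controlling the grid gaps, as in \cref{sec:numerics}.
\end{itemize}

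The main obstacle is step (iii) near the degenerate corners of parameter space, where $\sigma\to0$. Dividing by $\sigma$ requires explicit first-order expansions of $\mathsf L$ in $\alpha$ and $\beta$ to show that $Q$ extends continuously. I would first try expressing $\mathsf L_1$ and $\mathsf L_2$ through integral forms, for example $a^\alpha-b^\alpha=\alpha\int_b^a u^{\alpha-1}du$, to extract the factor $\alpha$ or $\beta$ analytically. After that, the remaining bounded function can be checked by interval-style sampling. The constant $0.0254$ is presumably the numerically found minimum with a safety margin.
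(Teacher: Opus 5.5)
Your proposal matches the paper's proof: the paper defines $\mathsf L_0=\mathsf L/(\sigma s^{1-\beta}e^{s})$, which is your $Q$, and computes numerically $\inf\mathsf L_0=-0.025358\ldots$ over $\alpha,\beta\in[0,1]$, $\rho\in[0,2\sin(\tfrac12\phi_0)]$, $s\ge0$, attained at $(\alpha,\beta,\rho,s)=(0,1,0,0.7149\ldots)$ with existence justified by the endpoint limits \cref{eq95,eq96,eq97,eq98,eq99,eq100,eq101,eq102,eq103,eq104}; it then integrates the pointwise bound using \cref{eq14} with $\lambda=n-1$. One correction to your heuristic: vanishing of $\mathsf L$ on the edges $\alpha=0$ and $\beta=0$ only gives $\mathsf L=\mathcal O(\alpha\beta)$, whereas $\sigma\sim\pi\alpha\beta(\alpha+\beta)$ at the corner $(0,0)$, so first-order expansions are not enough there; boundedness of $Q$ at that corner comes from the exact cancellation of the $\mathcal O(\alpha\beta)$ terms of the $\sin(\pi\beta)$ and $\sin(\pi\alpha)$ brackets, which yields the finite limit \cref{eq96}.
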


\begin{remark}
\label{rem:sigma-tildeL}
$\sigma(\alpha,\beta)$ is bounded for $\alpha, \beta \in [0,1]$, and vanishes as $(\alpha,\beta)$ approaches the edges $\alpha=0$ and $\beta=0$. In particular
%%%%%%%%%%%%%%%%%
\begin{equation}
\label{eq60}
\sigma(\alpha,\beta)
=
\frac{\beta\sin(\pi\beta)}{1-\beta}\,\alpha
+\mathcal{O}(\alpha^2)
\quad (\alpha \to 0, \, \beta \in [0,1)),
\end{equation}
%%%%%%%%%%%%%%%%%
and
%%%%%%%%%%%%%%%%%
\begin{equation}
\label{eq61}
\sigma(\alpha,\beta)
=
\frac{\alpha\sin(\pi\alpha)}{1-\alpha}\,\beta
+\mathcal{O}(\beta^2)
\quad (\beta \to 0, \, \alpha \in [0,1)),
\end{equation}
%%%%%%%%%%%%%%%%%
with the limits of \cref{eq60,eq61} applying when $\beta \to 1$ and $\alpha \to 1$, respectively. In addition
%%%%%%%%%%%%%%%%%
\begin{equation}
\sigma(\alpha,\beta)\sim \pi\alpha\beta(\alpha+\beta)
\quad ((\alpha,\beta) \to (0,0)),
\end{equation}
%%%%%%%%%%%%%%%%%
and $\sigma(1,1)=2\pi$.
\end{remark}

Next, in conjunction with \cref{eq58}, is a numerical verification when $\alpha,\beta\in (0,1]$ and $\zeta\in[0,81]$ that the RHS of \cref{eq55} is positive for $n=5$ (the smallest value under consideration).

\begin{proposition}
\label{prop:scriptH5}
Define
%%%%%%%%%%%%%%%%%
\begin{equation}
\label{eq63}
\mathcal H_5(\alpha,\beta,\zeta)
=\frac{
H_5(\alpha,\beta;\zeta)
-0.0254\,\Gamma(2-\beta)
\sigma(\alpha,\beta)4^{-2+\beta}}
{\alpha\beta(\alpha+\beta)},
\end{equation}
%%%%%%%%%%%%%%%%%
where $H_n(\alpha,\beta;\zeta)$ and $\sigma(\alpha,\beta)$ are given by \cref{eq56,eq60}. Then, for $\alpha,\beta\in[0,1]$ and $\zeta\in[0,81]$,
%%%%%%%%%%%%%%%%%
\begin{multline}
\label{eq64}
\mathcal H_5(\alpha,\beta,\zeta)
\ge
\mathcal H_5(1,0,81)
\\
=\pi\left\{
\frac{11}{25}
-\frac{1}{25}M_{1,0}(1,0;81)
% \right.
% \\
% \left.
-\frac{1}{250}M_{1,1}(1,0;81)
-\frac{0.0254}{16}
\right\}
=0.1157505358\cdots.
\end{multline}
%%%%%%%%%%%%%%%%%
\end{proposition}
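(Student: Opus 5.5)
This is a numerical positivity statement on the compact set $[0,1]^2\times[0,81]$. The plan is to first make $\mathcal H_5$ a continuous function on the closed box, including the edges where the factor $\alpha\beta(\alpha+\beta)$ in the denominator vanishes. After that, we bound its minimum rigorously by a grid evaluation combined with a Lipschitz or derivative bound, and identify the minimiser as the corner $(\alpha,\beta,\zeta)=(1,0,81)$.

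\textbf{Step 1 (removable singularities).} Check that the numerator of \cref{eq63} vanishes to the order of $\alpha\beta(\alpha+\beta)$ along the edges $\alpha=0$ and $\beta=0$.
\begin{itemize}
\item At $\alpha=0$, \cref{eq50}'s companion $M_{1,k}(0,\beta;\zeta)=\Gamma(1+k-\beta)$ makes the $\sin(\pi\beta)$ bracket in \cref{eq56} vanish identically, and the $\sin(\pi\alpha)$ bracket carries a factor $\alpha$. The same holds at $\beta=0$ using \cref{eq50}.
\item Expand $H_5$ to first order in $\alpha$, with $\beta$ fixed, and to first order in $\beta$, with $\alpha$ fixed. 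This requires the derivatives $\partial_\alpha M_{1,k}$ and $\partial_\beta M_{2,k}$ at the edges, which are explicit integrals involving $\log(t^2+\zeta)$.
\item Compare with the $\sigma$ term via \cref{eq60,eq61}. This gives explicit limiting functions on the edges, and a separate treatment near $(0,0)$ using $\sigma\sim\pi\alpha\beta(\alpha+\beta)$.
\end{itemize}

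\textbf{Step 2 (the claimed minimum value).} Evaluate the corner directly: at $\alpha=1$, $\beta\to0$, divide by $\alpha\beta(\alpha+\beta)\sim\beta$.
\begin{itemize}
\item The $\sin(\pi\alpha)$ bracket vanishes.
\item $\sin(\pi\beta)/\beta\to\pi$.
\item $2\Gamma(1)\,5^{-1}\{1+\tfrac1{10}\}=\tfrac{11}{25}$.
\item $n^{-2}M_{1,0}=\tfrac1{25}M_{1,0}(1,0;81)$ and $\tfrac12\cdot5^{-3}M_{1,1}=\tfrac1{250}M_{1,1}$.
\item By \cref{eq61}, $\sigma/\beta\to\pi\cdot\lim_{\alpha\to1}\alpha\sin(\pi\alpha)/(1-\alpha)=\pi$, giving the $0.0254/16$ term.
\end{itemize}
This reproduces \cref{eq64}. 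The elementary integrals $M_{1,0}(1,0;81)=\int_0^\infty(t^2+81)^{1/2}e^{-t}dt$ and $M_{1,1}$ are evaluated by quadrature and cross-checked with the Meijer form \cref{eq51}.

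\textbf{Step 3 (global minimum).} Monotonicity in $\zeta$ should be provable analytically.
\begin{itemize}
\item $M_{1,k}$ is increasing in $\zeta$ and enters $H_5$ with a negative sign. The coefficient $1+\tfrac12(\alpha-\beta)/5\cdot(\cdots)$ is positive, since $n^{-1}M_{1,0}+\tfrac12(\alpha-\beta)n^{-2}M_{1,1}$ is an integral of $t^{-\beta}(t^2+\zeta)^{\alpha/2}(1+\tfrac{\alpha-\beta}{2n}t)e^{-t}$ with positive weight for $n=5$.
\item $M_{2,k}$ is decreasing in $\zeta$ but enters with a negative sign, so it pushes the other way. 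Its contribution is weighted by $\sin(\pi\alpha)$ and is small.
\end{itemize}
Monotonicity in $\zeta$ therefore cannot be asserted outright. I would instead subdivide $[0,1]^2\times[0,81]$ into small boxes and evaluate $\mathcal H_5$ at the box vertices. I would bound the partial derivatives of $\mathcal H_5$ on each box, by differentiating under the integral signs in \cref{eq40,eq41,eq42,eq43}, whose integrands have explicit, easily bounded derivatives. Each box then yields a certified lower bound, and refinement near the corner $(1,0,81)$ shows the minimum is attained there.

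\textbf{Main obstacle.} The hardest part is the uniform derivative control near the degenerate edges, especially near $(0,0)$, where the quotient defining $\mathcal H_5$ is $0/0$ of third order. There I would first derive explicit Taylor remainders of the numerator in $(\alpha,\beta)$, using convexity-type bounds on $t^{-\beta}(t^2+\zeta)^{\alpha/2}$. This replaces $\mathcal H_5$ near the edges by a regular function whose derivatives can be bounded, and it also keeps the factors $\sin(\pi\beta)\Gamma(1-\beta)$ at $\beta=1$ under control.
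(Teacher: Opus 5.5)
Your corner computation in Step 2 is right, apart from a stray factor: the limit $\lim_{\alpha\to1}\alpha\sin(\pi\alpha)/(1-\alpha)$ is itself $\pi$, so it should not be multiplied by $\pi$. A certified box subdivision is a genuinely different and more ambitious route than the paper's. The paper runs a high-precision Maple search on $(\alpha,\beta)$-grids biased towards the edges, and for each fixed $(\alpha,\beta)$ reduces the $\zeta$-minimisation to finitely many points using \cref{lem:alpha<beta,lem:alpha>beta}.

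\textbf{Step 3 fails near $\zeta=0$.} The $\zeta$-derivatives are not easily bounded there. Substitute $t=\sqrt{\zeta}\,x$ in $\partial_\zeta M_{1,0}=\frac12\alpha\int_0^\infty t^{-\beta}(t^2+\zeta)^{\alpha/2-1}e^{-t}\,dt$. This shows that $\partial_\zeta M_{1,0}$ behaves like a positive multiple of $\alpha\,\zeta^{(\alpha-\beta-1)/2}$ as $\zeta\to0^+$, and $M_{2,0}$ behaves similarly (cf.\ \cref{eq150}). Since $\alpha-\beta-1<0$ except at $(1,0)$, no finite Lipschitz constant in $\zeta$ exists on boxes touching $\zeta=0$.

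\textbf{Step 1 cannot hold everywhere.} $\mathcal H_5$ has no limit at $(\alpha,\beta,\zeta)=(0,1,0)$. Let $\gamma$ denote Euler's constant.
Using $M_{j,k}(\alpha,\beta;0)=\Gamma(1+k+\alpha-\beta)$, one finds $\mathcal H_5(\alpha,\beta,0)\to\pi\{\ln10+\gamma+\frac{19}{100}-\frac{0.0254}{4}\}$ as $(\alpha,\beta)\to(0,1)$.
On the other hand, on $\beta=1$ with $\zeta>0$ fixed, as $\alpha\to0$,
$\mathcal H_5\to\pi\{\ln(10/\sqrt{\zeta})-M_{2,0}(0,1;\zeta)+\frac{9}{100}+\frac{1}{10}M_{2,1}(0,1;\zeta)-\frac{0.0254}{4}\}$.
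As $\zeta\to0^+$ this tends to the first expression with $\ln10$ replaced by $\ln5$. In between, $\mathcal H_5$ depends on $\rho^\alpha$ with $\rho=\sqrt{\zeta}/5$.

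Both limiting values are about $7$--$10$, so the inequality is not threatened. Still, this corner (small $\alpha$, $\beta$ near $1$, small $\zeta$) needs its own asymptotic argument. Separate monotone enclosures of $M_{1,0}$ and $M_{2,0}$ in $\zeta$ do not help uniformly there, because they destroy the cancellation between $\sin(\pi\beta)M_{1,0}$ and $\sin(\pi\alpha)M_{2,0}$ that keeps $\mathcal H_5$ bounded.

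\textbf{The corner $(0,0)$ you single out is benign.} Write $H_5=\sin(\pi\beta)A+\sin(\pi\alpha)B$, where $A=\int_0^\infty s^{-\beta}\{1+\frac12(\alpha-\beta)s\}\{2^\alpha-(s^2+\rho^2)^{\alpha/2}\}e^{-5s}\,ds$. Here $B$ is the analogous integral with $s^{\alpha}$ and $2^{-\beta}-(s^2+\rho^2)^{-\beta/2}$. Then the $\alpha\beta$ coefficient of $H_5$ cancels. The coefficients of $\alpha^2\beta$ and $\alpha\beta^2$ both equal $\pi\int_0^\infty(\ln2-L)\{\frac12(\ln2+L)-\ln s\}e^{-5s}\,ds$, with $L=\frac12\ln(s^2+\rho^2)$. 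So the limit exists there.

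\textbf{The missing idea is the paper's control of $\zeta$.} You treat $\sin(\pi\beta)M_{1,k}$ and $\sin(\pi\alpha)M_{2,k}$ separately, and so conclude that monotonicity cannot be asserted outright. Instead, combine their $\zeta$-derivatives under a single integral, as in \cref{eq148}.
\begin{itemize}
\item For $\alpha\ge\beta$, the inequalities $\alpha\sin(\pi\beta)\ge\beta\sin(\pi\alpha)$ and $(t^2+\zeta)^{\nu/2}\ge t^\nu$ make $\Psi_0$ and $\Psi_1$ increasing. Hence \cref{eq159} is decreasing and $\mathcal H_5(\alpha,\beta,\zeta)\ge\mathcal H_5(\alpha,\beta,81)$. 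On the half containing the minimiser $(1,0,81)$, the problem becomes two-dimensional and the $\zeta\to0$ difficulty disappears.
\item For $\alpha<\beta$, the single sign change of $q$ in \cref{eq152}, together with the monotonicity of $T=N/P$, gives at most one critical point for each of $\Psi_0$ and $\Psi_1$.
\end{itemize}

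\textbf{A false claim in Step 3.} Your claim that the weight $1+\frac{1}{10}(\alpha-\beta)t$ is positive fails when $\alpha<\beta$: it is negative for $t>10/(\beta-\alpha)$. The conclusion that this $M_{1,k}$ combination increases in $\zeta$ survives only because the negative contribution is $\mathcal O(e^{-10})$.

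With these ingredients your certified scheme would strengthen the paper's sampled minimisation. Without them, Step 3 does not go through.
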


\begin{remark}
In deriving the exact expression for $\mathcal H_5(1,0,81)$ in \cref{eq64} we used \cref{eq50,eq56,eq61,eq63}.
\end{remark}

The surface shown in \cref{fig:M5} illustrates this numerical lower-bound of $\mathcal H_5(\alpha,\beta,\zeta)$ at the endpoint value $\zeta=81$. In particular, the minimum occurs on the boundary of the parameter square $[0,1]^2$, at the corner $(1,0)$ given in \cref{eq64}.

\begin{figure}[hthp]
 \centering
 \includegraphics[
 width=0.9\textwidth,keepaspectratio]{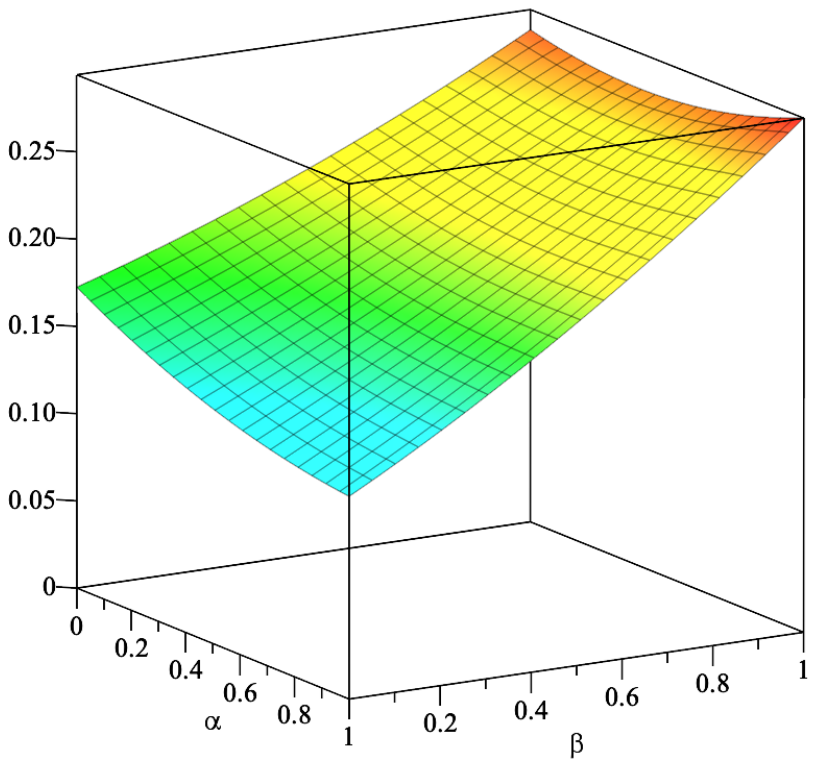}
 \caption{Graph of $\mathcal H_5(\alpha,\beta,81)$ for $\alpha,\beta \in [0,1]$.}
 \label{fig:M5}
\end{figure}

The next lemma, whose proof is also deferred to \cref{sec:lemma-proofs}, will be used to extend the above positivity to $n>5$. In this, in order to obtain sufficiently sharp bounds, we use the elementary inequality
%%%%%%%%%%%%%%%%%
\begin{equation}
\label{eq65}
-an^{-q}\ge -[a]_+m^{-q},
\quad n\ge m >0,\ q>0,
\end{equation}
%%%%%%%%%%%%%%%%%
where
%%%%%%%%%%%%%%%%%
\begin{equation}
\label{eq66}
[a]_+=\max\{a,0\},
\end{equation}
%%%%%%%%%%%%%%%%%
as opposed to the cruder bound $-an^{-q}\ge -|a|m^{-q}$.

\begin{lemma}
\label{lem:n-derivative}
Let
%%%%%%%%%%%%%%%%%
\begin{equation}
\label{eq67}
h_1(\alpha,\beta)=2^\alpha \sin(\pi\beta)\Gamma(1-\beta),
\end{equation}
%%%%%%%%%%%%%%%%%
%%%%%%%%%%%%%%%%%
\begin{equation}
\label{eq68}
h_2(\alpha,\beta)=\sin(\pi\alpha)\,2^{-\beta}\Gamma(1+\alpha)(1-\alpha-\beta),
\end{equation}
%%%%%%%%%%%%%%%%%
%%%%%%%%%%%%%%%%%
\begin{equation}
\label{eq69}
h_3(\alpha,\beta;\zeta)
=-\sin(\pi\beta)(1-\alpha)M_{1,0}(\alpha,\beta;\zeta)
-\sin(\pi\alpha)(1-\alpha)M_{2,0}(\alpha,\beta;\zeta),
\end{equation}
%%%%%%%%%%%%%%%%%
and
%%%%%%%%%%%%%%%%%
\begin{multline}
\label{eq70}
h_4(\alpha,\beta;\zeta)
=
\alpha(\beta-\alpha)\sin(\pi\beta)M_{1,1}(\alpha,\beta;\zeta)
\\
+\sin(\pi\alpha)
\left\{
\alpha(\beta-\alpha)M_{2,1}(\alpha,\beta;\zeta)
+\Gamma(2+\alpha)[\alpha^2-\beta^2]_+10^{-\beta}
\right\}.
\end{multline}
%%%%%%%%%%%%%%%%%
Then for $\alpha,\beta \in (0,1]$, $\zeta \ge 0$, and $n \ge 5$
%%%%%%%%%%%%%%%%%
\begin{multline}
\label{eq71}
n^{\alpha}\frac{\partial}
{\partial n}\left[n^{2-\beta}
\left\{H_n(\alpha,\beta;\zeta)
-0.0254\,\sigma(\alpha,\beta)
\Gamma(2-\beta)\,(n-1)^{-2+\beta} \right\}\right]
\\
\ge h_1(\alpha,\beta)5^\alpha+h_2(\alpha,\beta)5^{-\beta}
+h_3(\alpha,\beta;\zeta)
-\tfrac{1}{10}[h_4(\alpha,\beta;\zeta)]_+,
\end{multline}
%%%%%%%%%%%%%%%%%
where in the $n$ differentiation $\zeta$ is treated as an independent parameter.
\end{lemma}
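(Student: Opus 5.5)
The plan is to compute the $n$-derivative in \cref{eq71} term by term, with $\zeta$ held fixed, and then bound each resulting contribution from below for $n\ge5$ using \cref{eq65} or a monotonicity argument. Write $c=0.0254\,\sigma(\alpha,\beta)\Gamma(2-\beta)\ge0$. The quantity to differentiate is $F(n)=n^{2-\beta}H_n(\alpha,\beta;\zeta)-c\,\{n/(n-1)\}^{2-\beta}$. Since $n/(n-1)$ decreases in $n$ and $2-\beta>0$, the second term is increasing in $n$, so its derivative is nonnegative and can be dropped. The $\mathsf L$-remainder does not appear here, because \cref{lem:R-bound} has already been absorbed into the $c$-term.

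For $n^{2-\beta}H_n$, use \cref{eq56} and \cref{eq67}:
\begin{multline*}
n^{2-\beta}H_n = h_1(\alpha,\beta)\bigl\{n+\tfrac12(\alpha-\beta)(1-\beta)\bigr\}
-\sin(\pi\beta)\bigl\{n^{1-\alpha}M_{1,0}+\tfrac12(\alpha-\beta)n^{-\alpha}M_{1,1}\bigr\}
\\
+\sin(\pi\alpha)2^{-\beta}\Gamma(1+\alpha)\bigl\{n^{1-\alpha-\beta}+\tfrac12(\alpha-\beta)(1+\alpha)n^{-\alpha-\beta}\bigr\}
-\sin(\pi\alpha)\bigl\{n^{1-\alpha}M_{2,0}+\tfrac12(\alpha-\beta)n^{-\alpha}M_{2,1}\bigr\}.
\end{multline*}
Differentiating and multiplying by $n^\alpha$ gives four kinds of contribution.
\begin{itemize}
\item The linear term gives $h_1 n^\alpha$.
\item The $M_{j,0}$ terms give exactly $h_3(\alpha,\beta;\zeta)$, with no $n$-dependence left.
\item The $M_{j,1}$ terms give $\frac{1}{2n}\alpha(\alpha-\beta)\{\sin(\pi\beta)M_{1,1}+\sin(\pi\alpha)M_{2,1}\}$.
\item The $\Gamma(1+\alpha)$ terms give $h_2 n^{-\beta}-\tfrac12\sin(\pi\alpha)\Gamma(2+\alpha)2^{-\beta}(\alpha^2-\beta^2)n^{-1-\beta}$, using $(1+\alpha)\Gamma(1+\alpha)=\Gamma(2+\alpha)$.
\end{itemize}
The last term is bounded via \cref{eq65} with $m=5$, giving $\ge-\frac1{10}\sin(\pi\alpha)\Gamma(2+\alpha)[\alpha^2-\beta^2]_+10^{-\beta}$. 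The $M_{j,1}$ term equals $-\frac{1}{2n}$ times the $M_{j,1}$ part of $h_4$, so it is $\ge -\frac1{10}$ times the positive part of that quantity. Combining the two, and noting that the sum of the positive parts of two quantities dominates the positive part of their sum, yields the term $-\frac1{10}[h_4]_+$.

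The remaining issue, which I expect to be the main obstacle, is $g(n)=h_1n^\alpha+h_2n^{-\beta}$. We have $h_1\ge0$, but $h_2$ has the sign of $1-\alpha-\beta$. So we cannot simply use $n^{-\beta}\ge5^{-\beta}$. I would instead show $g$ is nondecreasing for $n\ge5$, which gives $g(n)\ge h_15^\alpha+h_25^{-\beta}$. Now $g'(n)=n^{-1-\beta}\{\alpha h_1n^{\alpha+\beta}-\beta h_2\}$, so it suffices that $\alpha h_1 5^{\alpha+\beta}\ge\beta h_2$.

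To check this, use $\sin(\pi\beta)\Gamma(1-\beta)=\pi/\Gamma(\beta)=\pi\beta/\Gamma(1+\beta)\ge\pi\beta$, valid since $\Gamma(1+\beta)\le1$ on $[0,1]$. This gives $\alpha h_15^{\alpha+\beta}\ge\pi\alpha\beta$. On the other side, $\beta h_2\le\beta\sin(\pi\alpha)\Gamma(1+\alpha)\le\pi\alpha\beta$, since $0\le1-\alpha-\beta\le1$ whenever $h_2>0$; when $h_2\le0$ there is nothing to prove. Summing all the bounds gives \cref{eq71}.
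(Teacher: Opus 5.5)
Your differentiation is correct: it reproduces the paper's decomposition $\hat H_n=h_1n^\alpha+h_2n^{-\beta}+h_3+n^{-1}g_n$. Dropping the $\sigma$-term is also fine. Your monotonicity argument for $h_1n^\alpha+h_2n^{-\beta}$, via $\alpha h_1 5^{\alpha+\beta}\ge\pi\alpha\beta\ge\beta h_2$, is a valid and slightly cruder substitute for the paper's critical-point computation.

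The gap is in the final step that combines the two $n^{-1}$ corrections. Write $X=\alpha(\beta-\alpha)\{\sin(\pi\beta)M_{1,1}+\sin(\pi\alpha)M_{2,1}\}$ and $Y=\sin(\pi\alpha)\Gamma(2+\alpha)[\alpha^2-\beta^2]_+10^{-\beta}\ge0$, so that $h_4=X+Y$. Bounding the two pieces separately gives $-\tfrac1{10}([X]_++Y)$. To reach \cref{eq71} you would need $[X]_++Y\le[X+Y]_+$. Subadditivity of the positive part gives exactly the reverse, $[X+Y]_+\le[X]_++[Y]_+$, so what you have proved is weaker than the lemma.

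This matters in practice. For $\beta<\alpha<1$ one has $X<0<Y$, and $X+Y$ can be negative. For example, $\alpha=0.9$, $\beta=0.1$, $\zeta=0$ gives $X\approx-0.75$ and $Y\approx0.36$. There the lemma asserts no penalty at all, while your argument still leaves $-Y/10\approx-0.036$.

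The repair is to use the common factor $n^{-1}$ before applying \cref{eq65}. Inside $g_n$, bound only the residual factor: $-(\alpha^2-\beta^2)(2n)^{-\beta}\ge-[\alpha^2-\beta^2]_+10^{-\beta}$ for $n\ge5$. Since $\sin(\pi\alpha)\Gamma(2+\alpha)\ge0$, this yields the $n$-independent bound $g_n\ge-\tfrac12 X-\tfrac12 Y=-\tfrac12 h_4$. A single application of \cref{eq65} with $q=1$, $m=5$ then gives $n^{-1}g_n\ge-\tfrac{1}{2n}h_4\ge-\tfrac1{10}[h_4]_+$. This is what the paper does through the auxiliary quantity $r_n(\alpha,\beta)$ and \cref{eq123,eq124}. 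It lets the negative $M_{j,1}$ contribution cancel against the positive $\Gamma(2+\alpha)$ contribution when $\alpha>\beta$, instead of discarding it.
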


\begin{remark}
Here and elsewhere, treating $\zeta$ as independent of $n$ when differentiating with respect to $n$ is legitimate because the derivative estimate is proved for each fixed value of $\zeta$, uniformly in $n$. In the subsequent proof this fixed $\zeta$ estimate is used uniformly for $\zeta\in[0,81]$.
\end{remark}

The non-negativity of the RHS of \cref{eq71} is numerically verified as follows.

\begin{proposition}
\label{prop:Q5}
Define
%%%%%%%%%%%%%%%%%
\begin{equation}
\label{eq72}
\mathcal Q_5(\alpha,\beta,\zeta)
=
\frac{
h_1(\alpha,\beta)5^\alpha+h_2(\alpha,\beta)5^{-\beta}
+h_3(\alpha,\beta;\zeta)
-\tfrac{1}{10}[h_4(\alpha,\beta;\zeta)]_+
}
{\alpha\beta(\alpha+\beta)}.
\end{equation}
%%%%%%%%%%%%%%%%%
Then, for $\alpha,\beta\in[0,1]$ and $\zeta\in[0,81]$,
%%%%%%%%%%%%%%%%%
\begin{equation}
\label{eq73}
\mathcal Q_5(\alpha,\beta,\zeta)
\ge
\mathcal Q_5(0,1,81)
=
\pi\left\{
1+\ln(\tfrac{10}{9})
-M_{2,0}(0,1;81)
\right\}
=3.12745\cdots.
\end{equation}
%%%%%%%%%%%%%%%%%
\end{proposition}

The corresponding plot in \cref{fig:Mp5} shows that the derivative lower bound is comfortably positive on the parameter square when $\zeta=81$, with the smallest value occurring at the boundary point $(0,1)$ specified in \cref{eq73}.

\begin{figure}[htbp]
 \centering
 \includegraphics[
 width=0.9\textwidth,keepaspectratio]{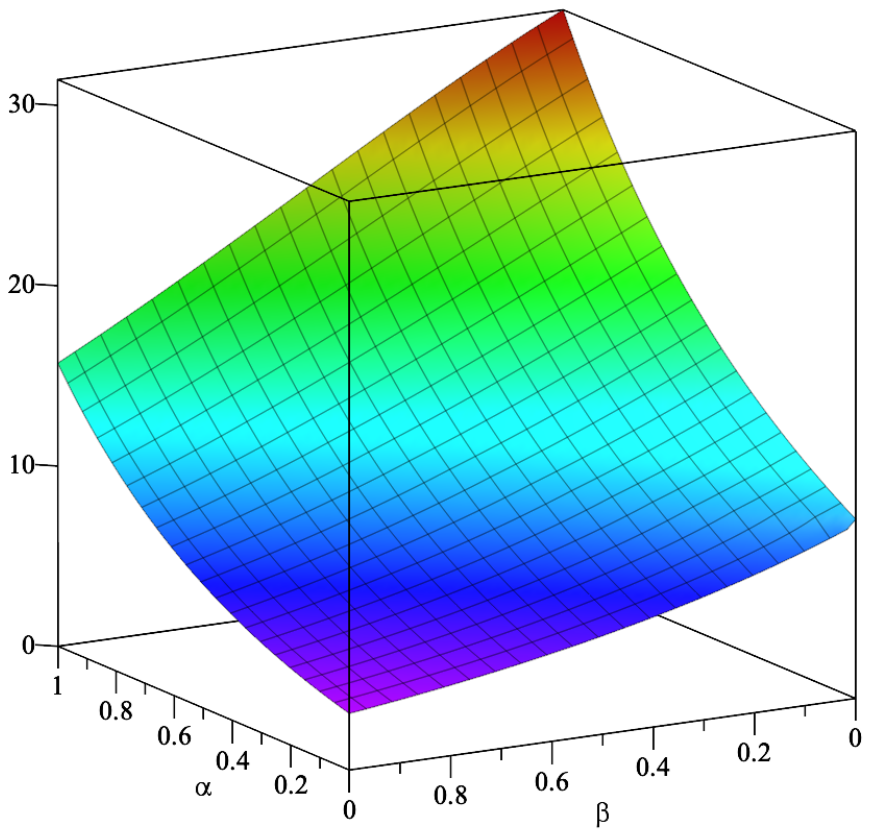}
 \caption{Graph of $\mathcal Q_5(\alpha,\beta,81)$ for $\alpha,\beta \in [0,1]$.}
 \label{fig:Mp5}
\end{figure}

With these preliminary results in place, we now verify the main result of this section.

\begin{proof}[Proof of \cref{thm:main-Meijer}]
Set
%%%%%%%%%%%%%%%%%
\begin{equation}
\label{eq74}
\mathcal F_n(\alpha,\beta,\zeta)
=
n^{2-\beta}\left\{
H_n(\alpha,\beta;\zeta)
-0.0254\,\sigma(\alpha,\beta)
\Gamma(2-\beta)(n-1)^{-2+\beta}
\right\},
\end{equation}
%%%%%%%%%%%%%%%%%
where $H_n(\alpha,\beta;\zeta)$ is defined by \cref{eq56}. Since $\zeta=4n^2\sin^2(\frac{1}{2}\phi)\le \eta^2$, the hypothesis $\eta\le9$ implies $\zeta\le81$, and hence \cref{eq64,eq73} apply. Thus, under the hypotheses of the theorem, by \cref{eq71,eq72,eq73}
%%%%%%%%%%%%%%%%%
\begin{equation*}
n^\alpha\frac{\partial}{\partial n}\mathcal 
F_n(\alpha,\beta,\zeta)
\ge
\alpha\beta(\alpha+\beta)\mathcal 
Q_5(\alpha,\beta,\zeta)>0.
\end{equation*}
%%%%%%%%%%%%%%%%%
Thus, for fixed $\alpha,\beta,\zeta$, the function $\mathcal F_n(\alpha,\beta,\zeta)$ is strictly increasing as a function of $n$.

It remains to check the initial value $n=5$. For the parameter values satisfying the hypotheses of the theorem, by \cref{eq63,eq64}, we obtain
%%%%%%%%%%%%%%%%%
\begin{multline*}
\mathcal F_5(\alpha,\beta,\zeta)
=5^{2-\beta}\left\{
H_5(\alpha,\beta;\zeta)
-0.0254\,\sigma(\alpha,\beta)\Gamma(2-\beta)4^{-2+\beta}
\right\}
\\
\ge
5^{2-\beta}\alpha\beta(\alpha+\beta)\mathcal H_5(1,0,81)>0.
\end{multline*}
%%%%%%%%%%%%%%%%%
Therefore $\mathcal F_n(\alpha,\beta,\zeta)>0$, and hence, by \cref{eq55,eq58,eq74},
%%%%%%%%%%%%%%%%%
\begin{equation}
\label{eq78}
w_n(\alpha,\beta,-1)-\bigl|
w_n(\alpha,\beta,e^{i\phi})\bigr|
\ge n^{-2+\beta}
\mathcal F_n(\alpha,\beta,\zeta)>0,
\end{equation}
%%%%%%%%%%%%%%%%%
which proves the theorem.
\end{proof}

\section{\texorpdfstring{Proof of \cref{thm:main-non-Meijer}}{Proof of Case II Theorem}}
\label{sec:case-II}

In this case $\phi>0$ since $\eta=n\phi>9$, and $\phi$ is small only when $n$ is large, since $n=\eta/\phi>9/\phi$. Moreover, because $\phi\le \phi_0=0.061$ throughout this section, the condition $\eta>9$ forces $n>9/\phi_0=9/0.061>147.5$, so this regime involves only large values of $n$. For this reason we are able to modify the Watson approach of \cite{Dunster:2026:GBC}.

For convenience, we introduce the parameter
%%%%%%%%%%%%%%%%%
\begin{equation}
\label{eq75}
\nu=\alpha+\beta.
\end{equation}
%%%%%%%%%%%%%%%%%
Then define 
%%%%%%%%%%%%%%%%%
\begin{equation}
\label{eq79}
\widetilde{\mathsf{L}}(\alpha,\beta,\phi;s)
=
\frac{s^2}{\nu}
\left\{
\frac{\widetilde A_1(\alpha,\phi)\,
K_1(\alpha,\beta,\phi;s)}
{\Gamma(1-\beta)s^\beta}
-
\frac{s^\alpha \widetilde A_2(\alpha,\beta,\phi)\,
K_2(\alpha,\beta,\phi;s)}
{\Gamma(1+\alpha)}
\right\},
\end{equation}
%%%%%%%%%%%%%%%%%
where
%%%%%%%%%%%%%%%%%
\begin{equation}
\label{eq80}
\widetilde A_1(\alpha,\phi)
=
\frac{(1-\alpha)\bigl(2^\alpha-\{2\sin\left(\tfrac{1}{2}\phi\right)\}^\alpha\bigr)}
{\sin(\pi\alpha)\Gamma(1+\alpha)},
\end{equation}
%%%%%%%%%%%%%%%%%
%%%%%%%%%%%%%%%%%
\begin{equation}
\label{eq81}
\widetilde A_2(\alpha,\beta,\phi)
=
\frac{(1-\alpha)\Gamma(\beta)\bigl(\{2\sin\left(\tfrac{1}{2}\phi\right)\}^{-\beta}-2^{-\beta}\bigr)}{\pi},
\end{equation}
%%%%%%%%%%%%%%%%%
%%%%%%%%%%%%%%%%%
\begin{equation}
\label{eq82}
K_1(\alpha,\beta,\phi;s)
= \frac{1}{s^{2}}\left[\left(\frac{s}{e^s-1}\right)^{\beta}
\frac{(e^s+1)^\alpha - R(s,\phi)^\alpha}
{2^\alpha - \left\{2\sin(\tfrac12 \phi)\right\}^\alpha}- 1
- \frac{1}{2}(\alpha-\beta)\,s
\right],
\end{equation}
%%%%%%%%%%%%%%%%%
and
%%%%%%%%%%%%%%%%%
\begin{equation}
\label{eq83}
K_2(\alpha,\beta,\phi;s)
= \frac{1}{s^{2}}\left[\left(\frac{e^s-1}{s}\right)^{\alpha}
\frac{R(s,\phi)^{-\beta}-(e^s+1)^{-\beta}}
{\left\{2\sin(\tfrac12 \phi)\right\}^{-\beta}
-2^{-\beta} }- 1
- \frac{1}{2}(\alpha-\beta)\,s
\right].
\end{equation}
%%%%%%%%%%%%%%%%%

We shall work with the following Watson-type lower bound, derived in \cite{Dunster:2026:GBC} using \cref{eq08}, which does not require Meijer $G$ functions.

\begin{theorem}
\label{thm:Watson-lower-bound}

Let $\alpha,\beta \in (0,1)$ and $\phi \in (0,\pi)$. If $n\ge1$ is odd, then
%%%%%%%%%%%%%%%%%
\begin{multline}
\label{eq84}
w_n(\alpha,\beta,-1)-\bigl|w_n(\alpha,\beta,e^{i \phi})\bigr|
\ge
\frac{\nu\sin(\pi\alpha)\sin(\pi\beta)
\Gamma(1-\beta)\Gamma(1+\alpha)}
{(1-\alpha)\,n^{1-\beta}}
\\
\times
\left\{\frac{1}{\nu}
\widetilde A_1(\alpha,\phi)
\left(1+\frac{c_1(\alpha,\beta)}{n}\right)
-\frac{1}{\nu}\widetilde A_2(\alpha,\beta,\phi)n^{-\alpha-\beta}
\left(1+\frac{c_2(\alpha,\beta)}{n}\right)
\right.
\\
\left.
+n^{1-\beta}
\int_0^\infty
\widetilde{\mathsf{L}}(\alpha,\beta,\phi;s)e^{-ns}\,d s
\right\},
\end{multline}
%%%%%%%%%%%%%%%%%
where
%%%%%%%%%%%%%%%%%
\begin{equation}
\label{eq85}
c_1(\alpha,\beta)=\tfrac{1}{2}(\alpha-\beta)(1-\beta),
\quad
c_2(\alpha,\beta)=\tfrac{1}{2}(\alpha-\beta)(1+\alpha).
\end{equation}
%%%%%%%%%%%%%%%%%

\end{theorem}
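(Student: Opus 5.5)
The plan is to obtain \cref{eq84} essentially as an identity: subtract the upper bound \cref{eq19} from the exact representation \cref{eq11}, then re-expand the integrands about $s=0$ in the form of \cref{eq12,eq13}. The only inequality used is \cref{eq19} itself.

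\emph{Step 1: combine the two integral representations.} Since $n$ is odd, \cref{eq11} holds. Subtracting \cref{eq19}, with $J_1,J_2$ from \cref{eq16,eq17}, gives
\begin{multline*}
w_n(\alpha,\beta,-1)-\bigl|w_n(\alpha,\beta,e^{i\phi})\bigr|
\ge \sin(\pi\beta)\int_0^\infty (e^s-1)^{-\beta}\bigl[(e^s+1)^\alpha-R(s,\phi)^\alpha\bigr]e^{-ns}\,ds
\\
-\sin(\pi\alpha)\int_0^\infty (e^s-1)^{\alpha}\bigl[R(s,\phi)^{-\beta}-(e^s+1)^{-\beta}\bigr]e^{-ns}\,ds .
\end{multline*}
Both brackets are positive, because $R(s,\phi)<e^s+1$ for $\phi\in(0,\pi)$. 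Since $\alpha,\beta\in(0,1)$ and $\phi>0$, both integrands behave like a constant times $s^{-\beta}$ or $s^{\alpha}$ at the origin and decay exponentially, so both integrals converge.

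\emph{Step 2: expand about $s=0$.} Write $\rho=2\sin(\frac12\phi)$. By the definitions \cref{eq82,eq83},
\begin{gather*}
(e^s-1)^{-\beta}\bigl[(e^s+1)^\alpha-R^\alpha\bigr]=(2^\alpha-\rho^\alpha)s^{-\beta}\bigl\{1+\tfrac12(\alpha-\beta)s+s^2K_1(\alpha,\beta,\phi;s)\bigr\},\\
(e^s-1)^{\alpha}\bigl[R^{-\beta}-(e^s+1)^{-\beta}\bigr]=(\rho^{-\beta}-2^{-\beta})s^{\alpha}\bigl\{1+\tfrac12(\alpha-\beta)s+s^2K_2(\alpha,\beta,\phi;s)\bigr\}.
\end{gather*}
We must check that $K_1$ and $K_2$ are $\mathcal O(1)$ as $s\to0$. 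From $R^2=(e^s-1)^2+\rho^2e^s$ we get $\tfrac{d}{ds}R^\alpha|_{s=0}=\tfrac12\alpha\rho^\alpha$, and $\tfrac{d}{ds}(e^s+1)^\alpha|_{s=0}=\tfrac12\alpha 2^\alpha$. Hence
\[
\frac{(e^s+1)^\alpha-R^\alpha}{2^\alpha-\rho^\alpha}=1+\tfrac12\alpha s+\mathcal O(s^2).
\]
Multiplying by $(s/(e^s-1))^\beta=1-\tfrac12\beta s+\mathcal O(s^2)$ gives $1+\tfrac12(\alpha-\beta)s+\mathcal O(s^2)$. The analogous computation with exponent $-\beta$, multiplied by $((e^s-1)/s)^\alpha=1+\tfrac12\alpha s+\mathcal O(s^2)$, gives the same expansion. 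So both kernels are bounded near $s=0$.

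\emph{Step 3: integrate the leading terms.} Apply \cref{eq14} to the first two terms of each expansion. This produces
\[
\sin(\pi\beta)(2^\alpha-\rho^\alpha)\Gamma(1-\beta)n^{\beta-1}\bigl(1+c_1/n\bigr)
\quad\text{and}\quad
-\sin(\pi\alpha)(\rho^{-\beta}-2^{-\beta})\Gamma(1+\alpha)n^{-1-\alpha}\bigl(1+c_2/n\bigr),
\]
with $c_1,c_2$ as in \cref{eq85}. What remains are the integrals of the $s^{2-\beta}K_1$ and $s^{2+\alpha}K_2$ terms against $e^{-ns}$.

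\emph{Step 4: match the prefactor.} Factor out $P=\nu\sin(\pi\alpha)\sin(\pi\beta)\Gamma(1-\beta)\Gamma(1+\alpha)/\{(1-\alpha)n^{1-\beta}\}$.
\begin{itemize}
\item From \cref{eq80}, $P\,\widetilde A_1/\nu=\sin(\pi\beta)\Gamma(1-\beta)(2^\alpha-\rho^\alpha)n^{\beta-1}$, which is the first leading term.
\item From \cref{eq81} and the reflection formula $\Gamma(\beta)\Gamma(1-\beta)=\pi/\sin(\pi\beta)$, $P\,\widetilde A_2 n^{-\alpha-\beta}/\nu=\sin(\pi\alpha)\Gamma(1+\alpha)(\rho^{-\beta}-2^{-\beta})n^{-1-\alpha}$, which is the second.
\item Comparing with \cref{eq79}, $P\,n^{1-\beta}\widetilde{\mathsf L}(\alpha,\beta,\phi;s)$ equals
\[
\sin(\pi\beta)(2^\alpha-\rho^\alpha)s^{2-\beta}K_1-\sin(\pi\alpha)(\rho^{-\beta}-2^{-\beta})s^{2+\alpha}K_2,
\]
which is exactly the remainder integrand.
\end{itemize}
Substituting these three identities into Step 1 gives \cref{eq84}.

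The algebra is routine. The main care point is Step 2: confirming the uniform local behaviour of $K_1$ and $K_2$, and justifying the splitting of the convergent integrals into the Gamma-integral terms plus the remainder. The restriction to open intervals $\alpha,\beta\in(0,1)$ and $\phi\in(0,\pi)$ is what keeps all the factors $\Gamma(1-\beta)$, $\Gamma(\beta)$, $1/\sin(\pi\alpha)$ and $\rho^{-\beta}$ finite.
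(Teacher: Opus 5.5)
Your argument is correct, but it reaches the bound by a different (self-contained) route from the paper's. I checked each step: subtracting \cref{eq19} from \cref{eq11}, the integrand identities in your Step 2 follow exactly from \cref{eq82,eq83}, and \cref{eq14} yields the factors $1+c_j(\alpha,\beta)/n$. Your three prefactor identities also check out, the one for $\widetilde A_2$ via $\Gamma(\beta)\Gamma(1-\beta)=\pi/\sin(\pi\beta)$.

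The paper does not carry out this computation. It imports \cite[Thm.~3.2]{Dunster:2026:GBC} and rescales the coefficients and kernel of that theorem by $(\pi-\phi)^2$, removing a $(\pi-\phi)^{-2}$ normalisation that is redundant here because $\phi$ stays away from $\pi$. It then merges that theorem's two error integrals $\mathcal E_0$ and $\mathcal E_\infty$ into a single integral over $(0,\infty)$.

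Your version shows directly that \cref{eq84} is an exact identity apart from the one triangle-inequality step \cref{eq19}, and the single remainder integral appears without any merging. The paper's version is a one-line reduction that keeps the statement tied to the notation and error analysis of the earlier work.

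A minor correction to your closing remark: $\phi\in(0,\pi)$ is fixed in this statement, so no uniformity of $K_1,K_2$ in $\phi$ is needed, and their boundedness at $s=0$ is not essential either. The splitting is justified because each leading term $s^{-\beta}$, $s^{1-\beta}$, $s^{\alpha}$, $s^{1+\alpha}$ is integrable against $e^{-ns}$. The remainder integral therefore converges as a difference of convergent integrals.
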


\begin{proof}
The result follows from \cite[Thm.~3.2]{Dunster:2026:GBC} by substituting $\widetilde A_1(\alpha,\phi)=(\pi-\phi)^2 A_1(\alpha,\phi)$, $\widetilde A_2(\alpha,\beta,\phi)=(\pi-\phi)^2 A_2(\alpha,\beta,\phi)$, and $\widetilde{\mathsf L}(\alpha,\beta,\phi;s)=(\pi-\phi)^2 L(\alpha,\beta,\phi;s)$, and combining the two error integrals $\mathcal E_0(\alpha,\beta,\phi;n)$ and $\mathcal E_\infty(\alpha,\beta,\phi;n)$ of that theorem into a single integral over $(0,\infty)$.
\end{proof}

\begin{remark}
In the present setting the factor $(\pi-\phi)^{-2}$ used in \cite[Thm.~3.2]{Dunster:2026:GBC} is redundant, since here $\phi$ is bounded away from $\pi$, and has therefore been suppressed. Note also that $\widetilde A_2(\alpha,\beta,\phi)\to\infty$ as $\phi\to0$, but, as we shall show, this is compensated by the fact that $n\to\infty$ in the present case, since $n>9/\phi$.
\end{remark}

Our task is to show that the RHS of \cref{eq84} is positive according to the hypotheses of this section. The following is the first step, which shows that the remainder term in \cref{eq84} is not ``too negative''.

\begin{lemma}
\label{lem:Lraw-bound}
Let $\widetilde{\mathsf{L}}(\alpha,\beta,\phi;s)$ be defined by \cref{eq79}, and
%%%%%%%%%%%%%%%%%
\begin{equation}
\label{eq86}
\varpi(\alpha,\beta,\phi)
=
\begin{cases}
\tfrac{1}{3}\Gamma\left(\tfrac32-\beta\right)
\left(1-\tfrac{1}{9}\phi\right)^{-3/2+\beta}
& \quad \left((\alpha,\beta)\in S_0\right),\\[2mm]
\tfrac{1}{9}\Gamma(2-\beta)
\left(1-\tfrac{1}{9}\phi\right)^{-2+\beta}
& \quad \left((\alpha,\beta)\in S\right),
\end{cases}
\end{equation}
%%%%%%%%%%%%%%%%%
where
%%%%%%%%%%%%%%%%%
\begin{equation}
\label{eq87}
S_0=[0,1]\times\left[0,\tfrac14\right),
\quad
S=[0,1]\times\left[\tfrac14,1\right].
\end{equation}
%%%%%%%%%%%%%%%%%
Then, for $\alpha,\beta\in[0,1]$, $\phi \in (0,\phi_0]$, and $\eta=n\phi>9$,
%%%%%%%%%%%%%%%%%
\begin{equation}
\label{eq88}
n^{1-\beta}\int_0^\infty
\widetilde{\mathsf{L}}
(\alpha,\beta,\phi;s)e^{-ns}\,d s
>-0.038 \, \varpi(\alpha,\beta,\phi).
\end{equation}
%%%%%%%%%%%%%%%%%
\end{lemma}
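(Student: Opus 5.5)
The plan is to bound the kernel $\widetilde{\mathsf{L}}$ from below pointwise in $s$ by an explicit multiple of a power of $s$, and then integrate term by term using \cref{eq14}. We split according to $S_0$ and $S$ as in \cref{eq87} because the available power differs in the two regions. For $\beta\ge\frac14$ we will aim to prove
\[
\widetilde{\mathsf{L}}(\alpha,\beta,\phi;s)\ge -0.038\cdot\tfrac19\, s^{1-\beta}e^{\phi s/9}\quad(s>0).
\]
For $\beta<\frac14$ the corresponding target is
\[
\widetilde{\mathsf{L}}(\alpha,\beta,\phi;s)\ge -0.038\cdot\tfrac13\, s^{1/2-\beta}e^{\phi s/9}.
\]
The factor $e^{\phi s/9}$ is included to absorb the growth of $K_1,K_2$ for large $s$; see below. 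Granting these bounds, \cref{eq14} with $\lambda=n-\phi/9$ gives the first bound as $-\frac{0.038}{9}\Gamma(2-\beta)(n-\phi/9)^{-2+\beta}$. Multiplying by $n^{1-\beta}$ yields $n^{-1}(1-\phi/(9n))^{-2+\beta}\Gamma(2-\beta)$. To reach exactly the form of $\varpi$ in \cref{eq86}, we use the hypothesis $n\phi>9$, i.e. $1/n<\phi/9$, which gives a prefactor $n^{-1}<\phi/9\le1$. It is cleaner, though, to rescale the trial weight so that the exponent of $n$ cancels. We therefore take weights $s^{1-\beta}$ and $s^{1/2-\beta}$ respectively. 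Then $n^{1-\beta}\int s^{1-\beta}e^{-(n-\phi/9)s}ds = \Gamma(2-\beta)\,n^{-1}(1-\phi/(9n))^{-2+\beta}$, and we bound $n^{-1}\le 1$. We also bound $1-\phi/(9n)\ge 1-\phi/9$, which is harmless since $n\ge1$ and the exponent is negative. The $S_0$ case is identical, giving $\Gamma(\frac32-\beta)$.

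The pointwise bounds come from analysing the two pieces of \cref{eq79}. The coefficients $\widetilde A_1/\nu$ and $\widetilde A_2/\nu$ are bounded uniformly for $\phi\le\phi_0$ except that $\widetilde A_2\sim(1-\alpha)\Gamma(\beta)\phi^{-\beta}/\pi$ blows up. This is the key difficulty. The term $s^{2+\alpha}\widetilde A_2K_2$ carries $\phi^{-\beta}$, and this must be traded against the factor $s^{\alpha+\beta}$. We do so using $s\phi\lesssim$ (scale where $R(s,\phi)\approx\sqrt{s^2+\phi^2}$ changes behaviour), together with the fact that $\{2\sin\frac12\phi\}^{-\beta}-2^{-\beta}$ in the denominator of $K_2$ normalises the difference $R^{-\beta}-(e^s+1)^{-\beta}$. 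This normalisation makes $\widetilde A_2K_2$ essentially $\phi$-independent once the $\phi^{-\beta}$ is cancelled. Concretely, for $s\ge\phi$ we use $R(s,\phi)\ge e^s-1\ge s$, and for $s\le\phi$ we use $R\ge\rho$. In each regime this bounds $\widetilde A_2 s^{2+\alpha}|K_2|/\Gamma(1+\alpha)$ by a constant times $s^{1-\beta}$ (for $\beta\ge\frac14$) or $s^{1/2-\beta}$ (for small $\beta$). For small $\beta$ the factor $\Gamma(\beta)\sim1/\beta$ is compensated by $\beta$ coming from expanding $x^{-\beta}$ differences, and the weaker power $s^{1/2}$ gives room near $\beta=0$.

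For the $K_1$ term we use that it is uniformly bounded for bounded $s$, as in the Taylor argument around \cref{eq24}. For large $s$ it grows at most like $e^{\alpha s}s^{-2}\cdot$const. This is dominated by $e^{\phi s/9}$ only after the factor $e^{-ns}$ is taken into account, so in practice we split the integral at $s=1$. On $[0,1]$ we use the pointwise bounds; on $[1,\infty)$ we bound the integrand crudely by its absolute value times $e^{-(n-1)s}$, which is $\mathcal O(e^{-n})$ and negligible since $n>147$. The explicit constant $0.038$ is then obtained by numerically minimising the normalised quantities $\widetilde{\mathsf{L}}\,s^{\beta-1}$ (resp.\ $s^{\beta-1/2}$) over the compact set $(\alpha,\beta)\in[0,1]^2$, $s\in[0,1]$, $\phi\in[0,\phi_0]$, with $s/\phi$ handled via the variable $s/(s+\phi)\in[0,1]$ to compactify. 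This follows the procedures of \cref{sec:numerics}.

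The main obstacle is this uniform-in-$\phi$ control of the $\widetilde A_2K_2$ term near $s\sim\phi\to0$, together with the degenerate corners $\alpha\to1$ and $\beta\to0$. We will first attempt it using the compactified variable and continuous extension at the edges.
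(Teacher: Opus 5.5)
There is a genuine gap. The $\phi$-independent pointwise envelopes you aim for are false, and the step $n^{-1}\le 1$ discards exactly the factor that makes the lemma true.

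On $S$, take $\alpha=1$, so that $\widetilde A_2=0$ and only the $K_1$ term survives, with $\phi\ll s\ll 1$. Expanding $R(s,\phi)\approx e^s-1+\rho^2e^s/\{2(e^s-1)\}$ in \cref{eq82} gives $s^2\widetilde A_1K_1\approx -s/\pi$, hence
\[
\widetilde{\mathsf L}(1,\beta,\phi;s)\approx-\frac{s^{1-\beta}}{(1+\beta)\pi\,\Gamma(1-\beta)}.
\]
At $\beta=\frac14$ this is about $-0.21\,s^{3/4}$; for instance $\phi=10^{-3}$, $s=0.05$ gives $\widetilde{\mathsf L}\approx-0.0215$, nearly fifty times the size of your target $-\frac{0.038}{9}s^{3/4}e^{\phi s/9}$. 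More generally, take $s=c\phi$ with $c>0$ small but fixed and let $\phi\to0$. Both pieces of \cref{eq79} are then of order $\phi^{\alpha-\beta}$ and their sum is negative, so $s^{\beta-1}\widetilde{\mathsf L}\asymp-\phi^{\alpha-1}$ is unbounded below for every $\alpha<1$.

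On $S_0$, \cref{eq132} shows that at $\alpha=\beta=0$, $s=c\phi$, the kernel tends to the negative constant $\frac{1}{8\pi}\ln(1+c^2)\ln\{c^4/(1+c^2)\}$, whereas $-\frac{0.038}{3}s^{1/2}\to0$. So $s^{\beta-1/2}\widetilde{\mathsf L}$ is also unbounded below on the set you propose to minimise over. Compactifying with $s/(s+\phi)$ cannot cure a genuine divergence.

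Relatedly, your claim that $K_1$ is uniformly bounded for bounded $s$ is incorrect. By \cref{eq126}, $K_1(\alpha,\beta,\phi;0)$ contains $-\alpha\rho^{\alpha-2}/\{2(2^\alpha-\rho^\alpha)\}$. The Taylor argument around \cref{eq24} concerns $\widetilde{\mathsf K}_1$, in which $(s^2+\rho^2)^{\alpha/2}$ has been factored out, not $K_1$.

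The missing idea is to let the envelope carry a negative power of $\phi$ and pair it with the matching power of $n$. The paper bounds $\widetilde{\mathsf L}_0$ of \cref{eq135}, namely $\phi\,s^{\beta-1}e^{-s}\widetilde{\mathsf L}$ on $S$ and $\phi^{1/2}s^{\beta-1/2}e^{-s}\widetilde{\mathsf L}$ on $S_0$. These \emph{are} bounded below, with infimum $\widetilde m_0\approx-0.03771$ approached at $\alpha=\beta=0$, $s=c_0\phi$, $\phi\to0$. Then \cref{eq14} gives $n^{1-\beta}\phi^{-1}\Gamma(2-\beta)(n-1)^{-2+\beta}=\Gamma(2-\beta)\eta^{-1}(1-\phi/\eta)^{-2+\beta}$ on $S$, and $\Gamma(\frac32-\beta)\eta^{-1/2}(1-\phi/\eta)^{-3/2+\beta}$ on $S_0$. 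Both are decreasing in $\eta$, so $\eta>9$ produces exactly the factors $\frac19$ and $\frac13$ in $\varpi$.

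In your plan $\eta>9$ enters only through $n>147$, which is the symptom of the problem. You noticed $n^{-1}<\phi/9$ but then discarded it; it must instead absorb the $\phi^{-1}$ (resp.\ $\phi^{-1/2}$) in the envelope. Finally, using the weight $e^{s}$ in place of your $e^{\phi s/9}$ lets a single pointwise bound cover all $s>0$. The cost is $n-1$ in place of $n$, which is where the factor $(1-\phi/9)^{-2+\beta}$ comes from.
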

The proof is given in \cref{sec:lemma-proofs}.

Next, we have the following fundamental lower bound.

\begin{lemma}
\label{lem:non-Meijer}
For $\phi \in (0,\phi_0]$, $\alpha,\beta\in (0,1)$, and odd $n$ such that $\eta>9$,
%%%%%%%%%%%%%%%%%
\begin{equation}
\label{eq89}
w_n(\alpha,\beta,-1)
-\bigl|w_n(\alpha,\beta,e^{i \phi})\bigr|
>
\frac{\nu\sin(\pi\alpha)\sin(\pi\beta)
\Gamma(1-\beta)\Gamma(1+\alpha)}
{(1-\alpha)\,n^{1-\beta}}
\widetilde P(\alpha,\beta;\phi),
\end{equation}
%%%%%%%%%%%%%%%%%
where
%%%%%%%%%%%%%%%%%
\begin{multline}
\label{eq90}
\widetilde P(\alpha,\beta;\phi)
=\frac{1}{\nu}\widetilde A_1(\alpha,\phi)
\left(1-\frac{1-\beta}{18}[\beta-\alpha]_+\phi\right)
\\
-
\frac{1}{\nu}\widetilde A_2(\alpha,\beta,\phi)
\left(\frac{\phi}{9}\right)^{\nu}
\left(1+\frac{1+\alpha}{18}[\alpha-\beta]_+\phi\right)
-0.038 \, \varpi(\alpha,\beta,\phi).
\end{multline}
%%%%%%%%%%%%%%%%%

\end{lemma}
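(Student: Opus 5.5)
We start from \cref{thm:Watson-lower-bound}. Its prefactor $\nu\sin(\pi\alpha)\sin(\pi\beta)\Gamma(1-\beta)\Gamma(1+\alpha)/\{(1-\alpha)n^{1-\beta}\}$ is strictly positive for $\alpha,\beta\in(0,1)$. So it suffices to show that the braced quantity in \cref{eq84} is bounded below by $\widetilde P(\alpha,\beta;\phi)$, with strict inequality somewhere. We treat the three terms in the braces separately, using the constraint $\eta=n\phi>9$, i.e. $n>9/\phi$, equivalently $1/n<\phi/9$.

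\emph{First term.} We have $\widetilde A_1(\alpha,\phi)\ge0$, since $2\sin(\tfrac12\phi)<2$. The factor $1+c_1/n$ with $c_1=\tfrac12(\alpha-\beta)(1-\beta)$ equals $1-\tfrac12(\beta-\alpha)(1-\beta)/n$. By the elementary inequality \cref{eq65} (with $q=1$),
\[
-\tfrac12(\beta-\alpha)(1-\beta)n^{-1}\ge -\tfrac12(1-\beta)[\beta-\alpha]_+n^{-1} > -\tfrac{1-\beta}{18}[\beta-\alpha]_+\phi .
\]
Hence $\frac1\nu\widetilde A_1(1+c_1/n)\ge\frac1\nu\widetilde A_1\bigl(1-\tfrac{1-\beta}{18}[\beta-\alpha]_+\phi\bigr)$.

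\emph{Second term.} Here $\widetilde A_2\ge0$ and it enters with a minus sign, so we need an upper bound on $n^{-\nu}(1+c_2/n)$. Since $\nu>0$, we have $n^{-\nu}<(\phi/9)^\nu$. Also $1+c_2/n\le 1+\tfrac12(1+\alpha)[\alpha-\beta]_+n^{-1}<1+\tfrac{1+\alpha}{18}[\alpha-\beta]_+\phi$. This factor is positive, so the two bounds multiply, giving
\[
-\tfrac1\nu\widetilde A_2\,n^{-\nu}(1+c_2/n)\ge -\tfrac1\nu\widetilde A_2(\phi/9)^\nu\bigl(1+\tfrac{1+\alpha}{18}[\alpha-\beta]_+\phi\bigr).
\]
The only care needed is the case $c_2<0$. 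There $1+c_2/n\le1$ directly; it is positive because $|c_2|\le1$ and $n>147$, so the product bound still holds.

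\emph{Remainder.} The third term is handled entirely by \cref{lem:Lraw-bound}, which gives $n^{1-\beta}\int_0^\infty\widetilde{\mathsf L}e^{-ns}\,ds>-0.038\,\varpi(\alpha,\beta,\phi)$. This inequality is strict, so the overall inequality \cref{eq89} is strict.

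Summing the three bounds gives \cref{eq89}. The argument is essentially bookkeeping once \cref{lem:Lraw-bound} is available, and that lemma contains all the genuine analytic difficulty. Within the present step, the main thing to check is the sign handling: $\widetilde A_1,\widetilde A_2\ge0$ for $\phi\in(0,\phi_0]$, and the factors $1+c_j/n$ are positive. This is what justifies replacing $n^{-1}$ and $n^{-\nu}$ by their upper bounds in terms of $\phi/9$.
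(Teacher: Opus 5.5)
Your proposal is correct and follows the paper's own proof. The remainder in \cref{eq84} is handled by \cref{lem:Lraw-bound}, and the two main terms by $1/n<\phi/9$ together with the positive parts of $c_1,c_2$, exactly as in \cref{eq92,eq93}; your explicit checks that $\widetilde A_1,\widetilde A_2>0$ and $1+c_2/n>0$ supply details the paper leaves implicit. One minor point: your intermediate ``$>$'' signs become equalities when the relevant $[\cdot]_+$ vanishes, but this is harmless because the overall strictness comes from \cref{eq88}.
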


\begin{proof}
By \cref{lem:Lraw-bound}, the remainder term in \cref{eq84} is bounded below by the RHS of \cref{eq88}. It remains then only to bound the two explicit main terms in \cref{eq84}. For this use $\eta > 9$, together with
%%%%%%%%%%%%%%%%%
\begin{equation*}
[-c_1(\alpha,\beta)]_+=\tfrac{1}{2}(1-\beta)[\beta-\alpha]_+,
\quad
[c_2(\alpha,\beta)]_+=\tfrac{1}{2}(1+\alpha)[\alpha-\beta]_+,
\end{equation*}
%%%%%%%%%%%%%%%%%
to obtain
%%%%%%%%%%%%%%%%%
\begin{equation}
\label{eq92}
1+\frac{c_1(\alpha,\beta)\phi}{\eta}
>
1-\frac{1}{18}(1-\beta)[\beta-\alpha]_+\phi,
\end{equation}
%%%%%%%%%%%%%%%%%
and
%%%%%%%%%%%%%%%%%
\begin{equation}
\label{eq93}
\left(\frac{\phi}{\eta}\right)^{\nu}
\left(1+\frac{c_2(\alpha,\beta)\phi}{\eta}\right)
<
\left(\frac{\phi}{9}\right)^{\nu}
\left(1+\frac{1+\alpha}{18}[\alpha-\beta]_+\phi\right).
\end{equation}
%%%%%%%%%%%%%%%%%
Therefore, by \cref{eq84,eq92,eq93,eq88}, we get \cref{eq89}.
\end{proof}

The final piece used for our main theorem is the numerical result establishing positivity:

\begin{proposition}
For $\alpha,\beta\in[0,1]$ and $\phi \in [0,\phi_0]$ ($\phi_0=0.061$),
%%%%%%%%%%%%%%%%%
\begin{equation}
\label{eq94}
\widetilde P(\alpha,\beta;\phi)
\ge
\widetilde P(1,1;\phi_0)
=
\frac{1-\sin\left(\tfrac12 \phi_0\right)}{\pi}
-\frac{0.038}{9-\phi_0}
=0.3043519050\cdots.
\end{equation}
%%%%%%%%%%%%%%%%%
\end{proposition}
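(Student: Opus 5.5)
The plan is to split $\widetilde P$ into its three pieces and bound each one monotonically in $\phi$ and in $(\alpha,\beta)$, so that the minimum can be placed at the corner $(1,1,\phi_0)$. A certified numerical scan of the remaining compact parameter box, as described in \cref{sec:numerics}, will then confirm that no interior value falls below that corner value.

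\textbf{Step 1: the corner value.} I first evaluate at $\alpha=\beta=1$ by continuous extension.

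For the $\widetilde A_1$ term, note that $(1-\alpha)/\sin(\pi\alpha)\to1/\pi$ as $\alpha\to1$ and $\Gamma(2)=1$. Hence $\widetilde A_1(1,\phi)=(2-2\sin(\tfrac12\phi))/\pi$, and $\frac1\nu\widetilde A_1=(1-\sin\tfrac12\phi)/\pi$. The factor $(1-\frac{1-\beta}{18}[\beta-\alpha]_+\phi)$ equals $1$ there.

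The term $\widetilde A_2$ carries the factor $(1-\alpha)$ and so vanishes at $\alpha=1$.

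Finally, $\varpi(1,1,\phi)=\frac19(1-\frac19\phi)^{-1}=1/(9-\phi)$. This reproduces the stated closed form, which is decreasing in $\phi$. So along the edge $\alpha=\beta=1$ the minimum sits at $\phi_0$.

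\textbf{Step 2: monotonicity in $\phi$.} I would show that each piece of $\widetilde P$ is nonincreasing in $\phi\in[0,\phi_0]$.

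For the first piece, $2^\alpha-(2\sin\frac12\phi)^\alpha$ decreases in $\phi$, and so does the bracket $1-c\phi$.

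For the second piece, the product $(\phi/9)^\nu\{(2\sin\frac12\phi)^{-\beta}-2^{-\beta}\}$ behaves like $9^{-\nu}\phi^{\alpha}$ times a bounded factor. It increases in $\phi$, so the subtracted term grows.

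For the third piece, $\varpi$ increases in $\phi$ in both cases $S_0$ and $S$.

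Together these show that $\widetilde P(\alpha,\beta;\phi)\ge\widetilde P(\alpha,\beta;\phi_0)$. At $\phi=0$ the $\widetilde A_2$ term is interpreted by continuity: it tends to $0$ when $\alpha>0$, and for $\alpha=0$ one can check it separately, since there $(\phi/9)^\beta\phi^{-\beta}$ is bounded.

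\textbf{Step 3: the two-variable problem.} It then remains to show that $\widetilde P(\alpha,\beta;\phi_0)\ge\widetilde P(1,1;\phi_0)$ on $[0,1]^2$. I expect this to be the main obstacle. The function is not obviously monotone in $\alpha$ or $\beta$, $\varpi$ jumps across $\beta=\frac14$, and there are removable singularities to handle: the factor $(1-\alpha)/\sin(\pi\alpha)$ at $\alpha=1$, and the factor $1/\nu$ near $(0,0)$.

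I would try an analytic reduction first. The main term $\frac1\nu\widetilde A_1$ is bounded below by its value on the edge $\alpha=1$, using that $(1-\alpha)/(\sin(\pi\alpha)\Gamma(1+\alpha))\ge1/\pi$ and $2^\alpha\ge1$. The $\widetilde A_2$ term is tiny: it is at most roughly $\Gamma(\beta)\phi_0^{\alpha}9^{-\nu}/(\pi\nu)$, which is far below the margin of about $0.3$ except near $\nu\to0$. Near $\nu\to0$ the $\widetilde A_1/\nu$ term itself blows up favourably, since $\widetilde A_1(0,\phi)=(1-1)\cdots$ requires care but $(2^\alpha-\rho^\alpha)/\alpha\to\ln(2/\rho)$, which is large.

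For the residual region where this crude separation is not conclusive, I would fall back on the interval or mesh-with-Lipschitz-bound verification of \cref{sec:numerics}. This mirrors \cref{prop:scriptH5,prop:Q5}, and handles the two sets $S_0$ and $S$ separately.
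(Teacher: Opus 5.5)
Your Step 1 is correct. Reducing to $\phi=\phi_0$ before scanning the square would also simplify the paper's procedure. The paper assumes no monotonicity in $\phi$; for each grid pair $(\alpha,\beta)$ it checks $\phi=0$, $\phi=\phi_0$, and any interior stationary points located by \texttt{fsolve}. However, Step 2 fails as stated. The $\widetilde A_1$ and $\varpi$ pieces are monotone as you claim, but the subtracted $\widetilde A_2$ term is not nondecreasing in $\phi$ when $\alpha$ is small. At $\alpha=0$ the factor $1+\frac{1+\alpha}{18}[\alpha-\beta]_+\phi$ equals $1$, and the term is
\[
\frac{\Gamma(\beta)}{\pi\beta\,9^{\beta}}\left\{\left(\frac{\phi}{\rho}\right)^{\beta}-\left(\frac{\phi}{2}\right)^{\beta}\right\}.
\]
This strictly decreases on $(0,\phi_0]$, starting from its value $\Gamma(\beta)/(\pi\beta\,9^{\beta})$ at $\phi=0^+$. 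Your ``bounded factor'' $(\phi/\rho)^\beta-(\phi/2)^\beta$ is itself decreasing, and $\phi^\alpha$ cannot compensate when $\alpha$ is small; by continuity the same happens for small $\alpha>0$. So this piece of $\widetilde P$ increases with $\phi$ there, and the termwise argument fails. The reduction to $\phi_0$ is still plausible, but it must be proved for the sum of the first two pieces, by showing that the decay of the $\widetilde A_1$ term dominates. At $\alpha=0$, for example, the combined $\phi$-derivative is to leading order $-\{1-\Gamma(1+\beta)(\phi/18)^{\beta}\}/(\pi\beta\phi)$. As $\beta\to0$ this already involves the cancellation described next.

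The heuristics in Step 3 near $\beta=0$ are also wrong. As $\beta\to0$, $\Gamma(\beta)(\rho^{-\beta}-2^{-\beta})\to\ln(2/\rho)$. Your bound $\Gamma(\beta)\phi_0^{\alpha}9^{-\nu}/(\pi\nu)$ discards the $\mathcal O(\beta)$ factor $1-(\rho/2)^{\beta}$, so it blows up along the entire edge $\beta=0$, not only near $\nu=0$. The true $\widetilde A_2$ term there is $(1-\alpha)\ln(2/\rho)(\phi/9)^{\alpha}/(\pi\alpha)$, which is not small: about $0.58$ at $\alpha=0.3$, $\phi=\phi_0$. Near $(0,0)$ both $\frac1\nu\widetilde A_1$ and $\frac1\nu\widetilde A_2(\phi/9)^{\nu}$ equal $\ln(2/\rho)/(\pi\nu)+\mathcal O(1)$, so their singular parts cancel. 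The main term therefore does not ``blow up favourably''. A second-order expansion gives a bounded limit $\frac1\pi\ln(2/\rho)\{\gamma+\frac12\ln(2\rho)+\ln(9/\phi)\}$, with $\gamma$ Euler's constant, up to small direction-dependent corrections; this is about $5$ at $\phi_0$. The inequality therefore survives, but you must extract this cancellation analytically before any mesh or Lipschitz check can be run across the corner. Separately, $(1-\alpha)/\{\sin(\pi\alpha)\Gamma(1+\alpha)\}\ge1/\pi$ and $2^{\alpha}\ge1$ do not give $\widetilde A_1(\alpha,\phi)\ge\widetilde A_1(1,\phi)$, because $2^{\alpha}-\rho^{\alpha}<2-\rho$ for $\alpha<1$. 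Indeed $\widetilde A_1(0.9,\phi_0)\approx0.601<0.617\approx\widetilde A_1(1,\phi_0)$, and only the factor $1/\nu$ rescues the comparison with the edge $\alpha=1$.
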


As an illustration, \cref{fig:tildeP} displays $\widetilde P(\alpha,\beta;\phi_0)$ on the full square $[0,1]^2$. The graph is consistent with the lower bound in \cref{eq94}, the smallest value being attained at the corner $(1,1)$.

\begin{figure}[hthp]
 \centering
 \includegraphics[
 width=0.9\textwidth,keepaspectratio]{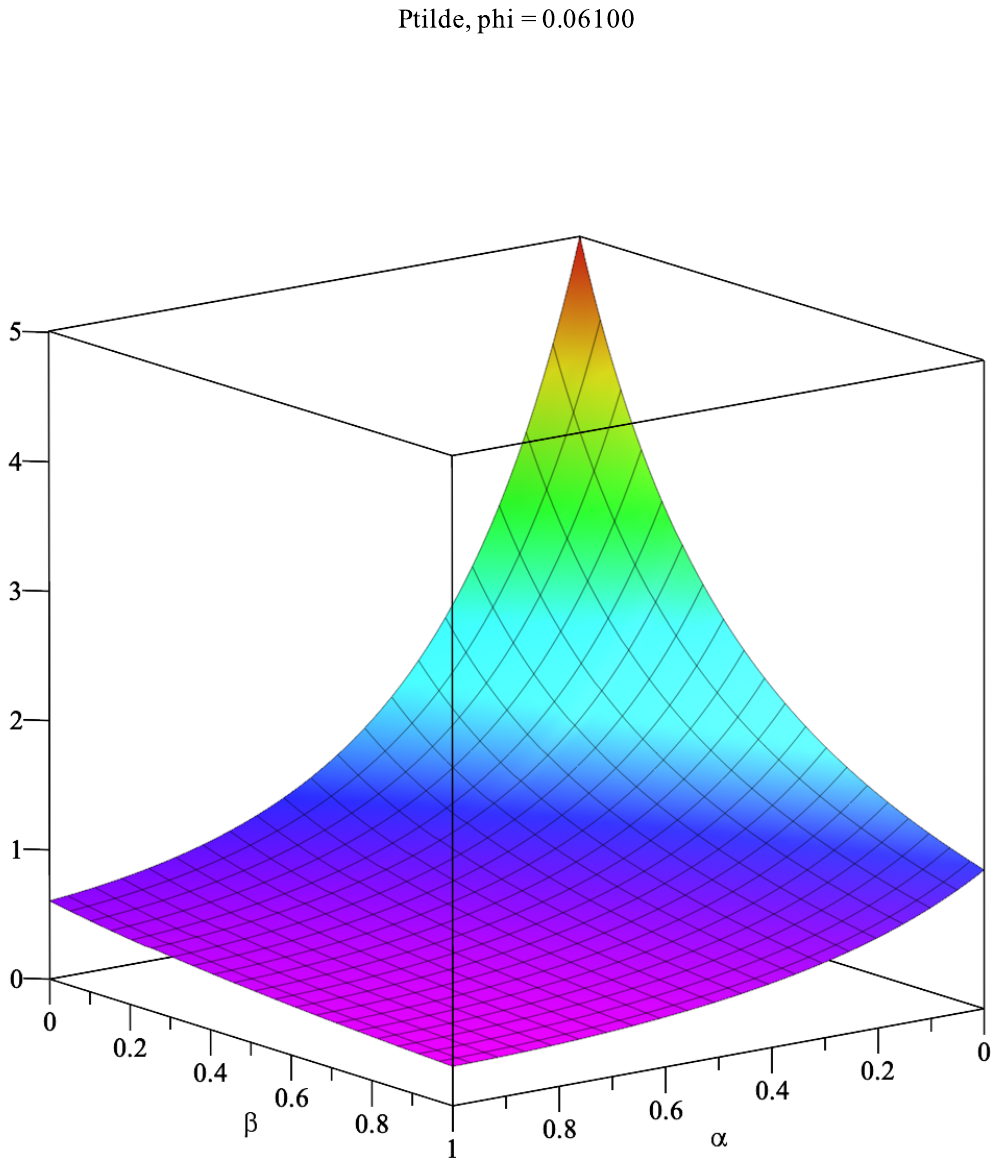}
 \caption{Graph of $\widetilde P(\alpha,\beta;\phi_0)$ for $\alpha,\beta \in [0,1]$.}
 \label{fig:tildeP}
\end{figure}

We now combine the preceding estimates to obtain the main result of this section, covering the non-Meijer range.

\begin{proof}[Proof of \cref{thm:main-non-Meijer}]
By \cref{lem:non-Meijer,eq94}, the lower bound in \cref{eq89}
is strictly positive for $\alpha,\beta\in(0,1)$, $\phi\in(0,\phi_0]$,
and $\eta>9$. The endpoint cases $\alpha=1$ and/or $\beta=1$ follow by continuous extension, using $\sin(\pi\alpha)/(1-\alpha)\to\pi$ as $\alpha\to1$ and $\sin(\pi\beta)\Gamma(1-\beta)\to\pi$ as $\beta\to1$.
\end{proof}

\section{\texorpdfstring{Proofs of \cref{lem:R-bound,lem:n-derivative,lem:Lraw-bound}}{Proofs of lemmas}} 
\label{sec:lemma-proofs}

Before proving \cref{lem:R-bound}, we record the endpoint behaviour of $\mathsf{L} (\alpha,\beta,\rho;s)$ that is used to justify the numerical lower bound \cref{eq106} below. Firstly, from \cref{eq57,eq09,eq10,eq29,eq32}, expanding at $s=0$ gives
%%%%%%%%%%%%%%%%%
\begin{multline}
\label{eq95}
\mathsf{L}(\alpha,\beta,\rho;s)
=
\tfrac{1}{24} \sin(\pi\beta) \left[
2^\alpha\left\{3(\alpha-\beta)^2+3\alpha-\beta\right\}
\right.
\\
\left.
-\rho^{\alpha}\left\{3(\alpha-\beta)^2-\beta\right\}
\right]s^{2-\beta}
+\mathcal{O}(s^{2+\alpha-\beta}),
\end{multline}
%%%%%%%%%%%%%%%%%
as $s \to 0$ for $\alpha,\beta \in [0,1]$ with $\nu=\alpha+\beta \ne 0$, and uniformly for $\rho \in [0,2\sin(\frac12 \phi_0)]$. 

The remaining limits follow from \cref{eq57,eq09,eq10,eq29,eq32,eq59} and by taking the corresponding boundary values.
%%%%%%%%%%%%%%%%%
\begin{multline}
\label{eq96}
\lim_{(\alpha,\beta)\to(0,0)}\frac{\mathsf{L}(\alpha,\beta,\rho;s)}{\sigma(\alpha,\beta)}
=
\frac1{8}\ln^2\left(\frac{s^2+\rho^2}{s^2}\right)
-\frac1{8}\ln^2\left(\frac{(e^s-1)^2+\rho^2\,e^s}{(e^s-1)^2}\right)
\\
+\frac12\ln^2\left(\frac{e^s+1}{e^s-1}\right)
-\frac12\ln^2\left(\frac{s}{2}\right)
=\mathcal{O}\left\{s^2\ln(s)\right\}
\quad (s \to 0),
\end{multline}
%%%%%%%%%%%%%%%%%
%%%%%%%%%%%%%%%%%
\begin{equation}
\label{eq97}
\lim_{\alpha\to0}\frac{\mathsf{L}(\alpha,\beta,\rho;s)}{\sigma(\alpha,\beta)}
=
\frac{1-\beta}{\beta\sin(\pi\beta)}
\left\{
\sin(\pi\beta)\,\partial_\alpha\mathcal A(0,\beta,\rho;s)
+\pi\,\mathcal B(0,\beta,\rho;s)
\right\},
\end{equation}
%%%%%%%%%%%%%%%%%
%%%%%%%%%%%%%%%%%
\begin{equation}
\label{eq98}
\lim_{\beta\to0}\frac{\mathsf{L}(\alpha,\beta,\rho;s)}{\sigma(\alpha,\beta)}
=
\frac{1-\alpha}{\alpha\sin(\pi\alpha)}
\left\{
\pi\,\mathcal A(\alpha,0,\rho;s)
+\sin(\pi\alpha)\,\partial_\beta\mathcal B(\alpha,0,\rho;s)
\right\},
\end{equation}
%%%%%%%%%%%%%%%%%%%%%%%%%%%%%%%%%%
where
%%%%%%%%%%%%%%%%%
\begin{equation}
\label{eq99}
\mathcal A(\alpha,\beta,\rho;s)
=
2^\alpha s^{2-\beta}\mathsf{K}_1(\alpha,\beta;s)
-
s^{2-\beta}(s^2+\rho^2)^{\alpha/2}\widetilde{\mathsf{K}}_1(\alpha,\beta,\rho;s),
\end{equation}
%%%%%%%%%%%%%%%%%
%%%%%%%%%%%%%%%%%
\begin{equation}
\label{eq100}
\mathcal B(\alpha,\beta,\rho;s)
=
2^{-\beta}s^{2+\alpha}\mathsf{K}_2(\alpha,\beta;s)
-
s^{2+\alpha}(s^2+\rho^2)^{-\beta/2}\widetilde{\mathsf{K}}_2(\alpha,\beta,\rho;s),
\end{equation}
%%%%%%%%%%%%%%%%%
%%%%%%%%%%%%%%%%%
\begin{equation}
\label{eq101}
\lim_{(\alpha,\beta)\to(0,1)}\frac{\mathsf{L}(\alpha,\beta,\rho;s)}{\sigma(\alpha,\beta)}
=
\frac{s}{4}-\frac{e^s-1}{2(e^s+1)}
-\frac{s-2}{2\sqrt{s^2+\rho^2}}
-\frac{1}{\sqrt{(e^s-1)^2+\rho^2\,e^s}},
\end{equation}
%%%%%%%%%%%%%%%%%
%%%%%%%%%%%%%%%%%
\begin{equation}
\label{eq102}
\lim_{(\alpha,\beta)\to(1,0)}\frac{\mathsf{L}(\alpha,\beta,\rho;s)}{\sigma(\alpha,\beta)}
=e^s-s-1
+\frac{1}{2}(s+2)\sqrt{s^2+\rho^2}
-\sqrt{(e^s-1)^2+\rho^2\,e^s},
\end{equation}
%%%%%%%%%%%%%%%%%
%%%%%%%%%%%%%%%%%
\begin{multline}
\label{eq103}
\lim_{\alpha\to1}
\frac{\mathsf{L}(\alpha,\beta,\rho;s)}{\sigma(\alpha,\beta)}
=
\frac{1-\beta}{2(1+\beta)}
\biggl[
2(e^s-1)^{-\beta}
\left(e^s+1-\sqrt{(e^s-1)^2+\rho^2\,e^s}\right)
\\
+s^{-\beta}\left(\sqrt{s^2+\rho^2}
-2\right)\left\{2+(1-\beta)s\right\}
\biggr],
\end{multline}
%%%%%%%%%%%%%%%%%
and
%%%%%%%%%%%%%%%%%
\begin{multline}
\label{eq104}
\lim_{\beta\to1}
\frac{\mathsf{L}(\alpha,\beta,\rho;s)}{\sigma(\alpha,\beta)}
=
\frac{(1-\alpha)s^\alpha}{2(1+\alpha)}
\left\{s-\frac{(1-\alpha)(s^2+2)}
{\sqrt{s^2+\rho^2}}\right\}
\\
+\frac{(1-\alpha)(e^s-1)^\alpha}{1+\alpha}
\left\{\frac{1}{e^s+1}
-\frac{1}{\sqrt{(e^s-1)^2+\rho^2\,e^s}}
\right\}.
\end{multline}
%%%%%%%%%%%%%%%%%

\begin{proof}[Proof of \cref{lem:R-bound}]
Let
%%%%%%%%%%%%%%%%%
\begin{equation}
\label{eq105}
\mathsf{L}_{0}
(\alpha,\beta,\rho;s)
=  
\frac{\mathsf{L}(\alpha,\beta,\rho;s)}
{\sigma(\alpha,\beta)s^{1-\beta} e^{s}}.
\end{equation}
%%%%%%%%%%%%%%%%%
We compute that
%%%%%%%%%%%%%%%%%
\begin{equation}
\label{eq106}
\inf \mathsf{L}_{0}(\alpha,\beta,\rho;s)
= -0.0253581510\cdots,
\end{equation}
%%%%%%%%%%%%%%%%%
where the infimum is taken over $\alpha,\beta \in [0,1]$, $\rho \in [0,2\sin(\tfrac12\phi_0)]$ and $s \in [0,\infty)$. Its existence follows from \cref{eq95,eq96,eq97,eq98,eq99,eq100,eq101,eq102,eq103,eq104}, and we find it is attained at $(\alpha,\beta,\rho,s)=(0,1,0,s_0)$ where $s_0 = 0.7149617952\cdots$.

\begin{figure}[hthp]
 \centering
 \includegraphics[
 width=0.9\textwidth,keepaspectratio]{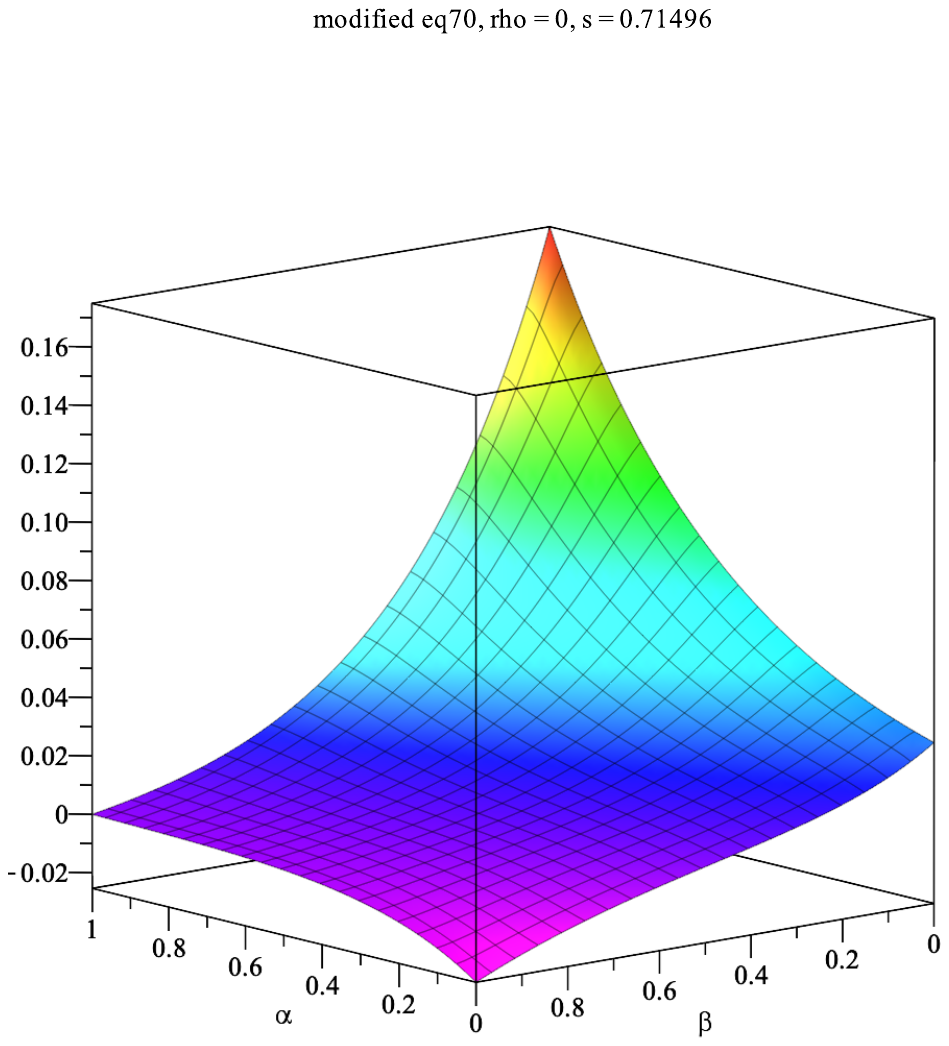}
 \caption{Graph of $\mathsf{L}_{0}(\alpha,\beta,0;s_0)$ for $\alpha,\beta \in [0,1]$.}
 \label{fig:Meijer-m0}
\end{figure}

Hence in this box
%%%%%%%%%%%%%%%%%
\begin{equation}
\label{eq107}
\mathsf{L}(\alpha,\beta,\rho;s) > -0.0254\,\sigma(\alpha,\beta)
s^{1-\beta} e^{s},
\end{equation}
%%%%%%%%%%%%%%%%%
and thus we get
%%%%%%%%%%%%%%%%%
\begin{equation}
\int_0^\infty \mathsf{L}(\alpha,\beta,\rho;s)e^{-ns}\,d s
>
-0.0254\,\sigma(\alpha,\beta)\int_0^\infty 
s^{1-\beta}e^{-(n-1)s} ds,
\end{equation}
%%%%%%%%%%%%%%%%%
with \cref{eq58} then following from \cref{eq14}.
\end{proof}

The plot in \cref{fig:Meijer-m0} gives a two-dimensional slice of the normalised remainder kernel at the numerically extremal values $\rho=0$ and $s=s_0$. It illustrates the boundary minimum at $(0,1)$ as given in \cref{eq106}.

\begin{proof}[Proof of \cref{lem:n-derivative}]

From \cref{eq56,eq74}, for $n>1$, we have by explicit differentiation
%%%%%%%%%%%%%%%%%
\begin{multline}
\label{eq109}
n^{\alpha}\frac{\partial}
{\partial n}\left\{ n^{2-\beta}
\left[ H_n(\alpha,\beta;\zeta)
-0.0254\,\sigma(\alpha,\beta)
\Gamma(2-\beta)\,(n-1)^{-2+\beta} \right]\right\}
\\
= \hat{H}_n(\alpha,\beta;\zeta)
+0.0254\,\sigma(\alpha,\beta)\,\Gamma(3-\beta)
\,n^{1+\alpha-\beta}(n-1)^{-3+\beta}
\ge \hat{H}_n(\alpha,\beta;\zeta),
\end{multline}
%%%%%%%%%%%%%%%%%
where
%%%%%%%%%%%%%%%%%
\begin{equation}
\label{eq110}
\hat{H}_n(\alpha,\beta;\zeta)
=
n^{\alpha}\frac{\partial}{\partial n}
\left\{n^{2-\beta}
H_n(\alpha,\beta;\zeta)\right\}.
\end{equation}
%%%%%%%%%%%%%%%%%
Treating $\zeta$ as independent of $n$, we obtain from \cref{eq56,eq110}
%%%%%%%%%%%%%%%%%
\begin{multline}
\label{eq111}
\hat H_n(\alpha,\beta;\zeta)
=
h_1(\alpha,\beta)n^\alpha
+h_2(\alpha,\beta)n^{-\beta}
+h_3(\alpha,\beta;\zeta)
+n^{-1}g_n(\alpha,\beta;\zeta),
\end{multline}
%%%%%%%%%%%%%%%%%
where $h_1(\alpha,\beta)$, $h_2(\alpha,\beta)$, $h_3(\alpha,\beta;\zeta)$ are given by \cref{eq67,eq68,eq69}, and
%%%%%%%%%%%%%%%%%
\begin{multline}
\label{eq112}
g_n(\alpha,\beta;\zeta)
=
\frac{1}{2}\alpha(\alpha-\beta)\sin(\pi\beta)
M_{1,1}(\alpha,\beta;\zeta)
\\
+\frac{1}{2}\sin(\pi\alpha)\Gamma(2+\alpha)
(\alpha-\beta)\left\{\alpha
\frac{M_{2,1}(\alpha,\beta;\zeta)}
{\Gamma(2+\alpha)}
-\frac{\alpha+\beta}{(2n)^{\beta}}
\right\}.
\end{multline}
%%%%%%%%%%%%%%%%%

Now consider bounding from below the first two terms combined on the RHS of \cref{eq111}, and as such define
%%%%%%%%%%%%%%%%%
\begin{equation}
d_n(\alpha,\beta)=
h_1(\alpha,\beta)n^\alpha+h_2(\alpha,\beta)n^{-\beta}.
\end{equation}
%%%%%%%%%%%%%%%%%
If $h_2(\alpha,\beta)>0$ the simple bound $d_n(\alpha,\beta) \ge h_1(\alpha,\beta)5^\alpha$ for $n \ge 5$ is not sufficiently sharp for our purposes. Instead, consider the derivative
%%%%%%%%%%%%%%%%%
\begin{equation}
\label{eq114}
\frac{\partial}{\partial n}d_n(\alpha,\beta)
=
\alpha h_1(\alpha,\beta)n^{\alpha-1}
-\beta h_2(\alpha,\beta)n^{-\beta-1}.
\end{equation}
%%%%%%%%%%%%%%%%%
From this we find that $d_n(\alpha,\beta)$, regarded as a function of $n$, has the unique critical point
%%%%%%%%%%%%%%%%%
\begin{equation}
\label{eq115}
n=n^*:=
\left(\frac{\beta h_2(\alpha,\beta)}{\alpha h_1(\alpha,\beta)}\right)^{1/(\alpha+\beta)}.
\end{equation}
%%%%%%%%%%%%%%%%%
Moreover, from \cref{eq67,eq68,eq78} we have
%%%%%%%%%%%%%%%%%
\begin{equation}
\label{eq116}
\frac{\beta h_2(\alpha,\beta)}{\alpha 
h_1(\alpha,\beta)}
=
\frac{(1-\nu)\Gamma(1+\beta)}{2^{\nu}
\,\Gamma(1-\alpha)},
\end{equation}
%%%%%%%%%%%%%%%%%
where $\nu=\alpha+\beta \in (0,1)$ (on the assumption $h_2(\alpha,\beta)>0$; cf. \cref{eq68,eq78}). Now, since $2^{\nu}>1$, $0<1-\nu<1$, $\Gamma(1+\beta)\le 1$, and $\Gamma(1-\alpha)\ge 1$ for $\alpha,\beta\in[0,1)$, the RHS of \cref{eq116} lies strictly between $0$ and $1$. Hence from \cref{eq115} $n^* \in (0,1)$, and taking into account $d_n(\alpha,\beta)\to+\infty$ as $n\to\infty$, we conclude that the minimum of $d_n(\alpha,\beta)$ on $n\ge5$ is attained at $n=5$ when $h_2(\alpha,\beta)>0$.

On the other hand, if $h_2(\alpha,\beta)\le 0$, then both terms on the RHS of \cref{eq114} are nonnegative, so $d_n(\alpha,\beta)$ is nondecreasing for $n>0$, and again its minimum on $n\ge 5$ is attained at $n=5$. Thus, in both cases,
%%%%%%%%%%%%%%%%%
\begin{equation}
\label{eq117}
d_n(\alpha,\beta)\ge d_5(\alpha,\beta)
=
h_1(\alpha,\beta)5^\alpha+h_2(\alpha,\beta)5^{-\beta}
\quad (n\ge 5).
\end{equation}
%%%%%%%%%%%%%%%%%

It remains to bound the correction term $g_n(\alpha,\beta;\zeta)$ in \cref{eq111}. Now from \cref{eq50}
%%%%%%%%%%%%%%%%%
\begin{equation*}
\alpha\,\frac{M_{2,1}(\alpha,\beta;\zeta)}
{\Gamma(2+\alpha)}
-\frac{\alpha+\beta}{(2n)^{\beta}}
=\mathcal{O}(\beta)
\quad (\beta \to 0).
\end{equation*}
%%%%%%%%%%%%%%%%%
Also the first term on the RHS of \cref{eq112} is obviously $\mathcal{O}(\beta)$ as $\beta\to0$, and in our bounds we wish to maintain this overall behaviour of the RHS of \cref{eq112}. So with this in mind, add and subtract $\alpha$ inside the bracket, yielding
%%%%%%%%%%%%%%%%%
\begin{multline}
\label{eq119}
g_n(\alpha,\beta;\zeta)
=
\frac{1}{2}\alpha(\alpha-\beta)\sin(\pi\beta)
M_{1,1}(\alpha,\beta;\zeta)
\\
+\frac{1}{2}\sin(\pi\alpha)\Gamma(2+\alpha)
\Biggl\{
\alpha(\alpha-\beta)
\left(\frac{M_{2,1}(\alpha,\beta;\zeta)}
{\Gamma(2+\alpha)}-1\right)
\\
+(\alpha-\beta)
\left(\alpha-
\frac{\alpha+\beta}
{(2n)^{\beta}}\right)
\Biggr\}.
\end{multline}
%%%%%%%%%%%%%%%%%
From \cref{eq50} note that
%%%%%%%%%%%%%%%%%
\begin{equation*}
\frac{M_{2,1}(\alpha,\beta;\zeta)}
{\Gamma(2+\alpha)}-1
=\mathcal{O}(\beta)
\quad (\beta \to 0).
\end{equation*}
%%%%%%%%%%%%%%%%%

For the elementary part, let
%%%%%%%%%%%%%%%%%
\begin{equation}
r_n(\alpha,\beta)=(\alpha-\beta)
\{\alpha-(\alpha+\beta)
(2n)^{-\beta}\},
\end{equation}
%%%%%%%%%%%%%%%%%
which also is $\mathcal{O}(\beta)$ as $\beta \to 0$. Now, for $n\ge 5$ and $\alpha\ge\beta$,
%%%%%%%%%%%%%%%%%
\begin{equation*}
r_n(\alpha,\beta)
\ge
\alpha(\alpha-\beta)-(\alpha^2-\beta^2)10^{-\beta}.
\end{equation*}
%%%%%%%%%%%%%%%%%
If $\alpha\le\beta$, then $[\alpha^2-\beta^2]_+=0$ and $r_n(\alpha,\beta)\ge \alpha(\alpha-\beta)$.
Therefore, in both cases,
%%%%%%%%%%%%%%%%%
\begin{equation}
\label{eq123}
r_n(\alpha,\beta)\ge
\alpha(\alpha-\beta)-[\alpha^2-\beta^2]_+\,10^{-\beta}
\quad (n\ge 5).
\end{equation}
%%%%%%%%%%%%%%%%%
This lower bound is $\mathcal{O}(\beta)$ as $\beta \to 0$, as desired. Consequently, by \cref{eq65,eq119,eq123}, for $n\ge 5$,
%%%%%%%%%%%%%%%%%
\begin{equation}
\label{eq124}
n^{-1}g_n(\alpha,\beta;\zeta)
\ge -\tfrac12 n^{-1}h_4(\alpha,\beta;\zeta)
\ge -\tfrac{1}{10}[h_4(\alpha,\beta;\zeta)]_+,
\end{equation}
%%%%%%%%%%%%%%%%%
where $h_4(\alpha,\beta;\zeta)$ is given by \cref{eq70}.

Combining \cref{eq111,eq117,eq124}, we obtain
%%%%%%%%%%%%%%%%%
\begin{equation}
\label{eq125}
\hat H_n(\alpha,\beta;\zeta)
\ge
h_1(\alpha,\beta)5^\alpha+h_2(\alpha,\beta)5^{-\beta}
+h_3(\alpha,\beta;\zeta)
-\tfrac{1}{10}[h_4(\alpha,\beta;\zeta)]_+,
\end{equation}
%%%%%%%%%%%%%%%%%
valid for $n\ge 5$. The lower bound \cref{eq71} then follows from \cref{eq109,eq125}.
\end{proof}

We finally turn our attention to the proof of \cref{lem:Lraw-bound}. Before doing so, we first record some elementary limits and asymptotic formulae for
$\widetilde{\mathsf{L}}(\alpha,\beta,\phi;s)$, which describe its endpoint behaviour and motivate the normalisation used below. 

In \cref{eq126,eq127}, recall that $\rho=2\sin(\frac{1}{2}\phi)$. Now, as $s\to 0$, with $\nu=\alpha+\beta>0$ and $\rho>0$,
%%%%%%%%%%%%%%%%%
\begin{multline}
\label{eq126}
K_1(\alpha,\beta,\phi;s)
=\frac{1}{24\rho^2\left(2^\alpha-\rho^\alpha\right)}
\left[
\left\{\beta-3(\alpha-\beta)^2\right\}\rho^{2+\alpha}
\right.
\\
\left.
+\left\{3(\alpha-\beta)^2+3\alpha-\beta\right\}2^\alpha\rho^2
-12\alpha\rho^\alpha\right]+\mathcal{O}(s),
\end{multline}
%%%%%%%%%%%%%%%%%
and
%%%%%%%%%%%%%%%%%
\begin{multline}
\label{eq127}
K_2(\alpha,\beta,\phi;s)
=\frac{1}{24\rho^2\left(2^\beta-\rho^\beta\right)}
\left[
\left\{3\beta-\alpha-3(\alpha-\beta)^2\right\}\rho^{2+\beta}
\right.
\\
\left.
+\left\{3(\alpha-\beta)^2+\alpha\right\}2^\beta\rho^2
-12\beta\,2^\beta
\right]+\mathcal{O}(s).
\end{multline}
%%%%%%%%%%%%%%%%%
The limiting cases $\alpha=0$ and $\beta=0$ are understood by continuous extension and are recorded separately below. From \cref{eq126,eq127} it follows that 
%%%%%%%%%%%%%%%%%
\begin{equation}
\label{eq128}
\widetilde{\mathsf{L}}(\alpha,\beta,\phi;s)
=
\mathcal{O}(s^{2-\beta})
\quad (\nu>0,\, \phi>0, \,s \to 0).
\end{equation}
%%%%%%%%%%%%%%%%%
Moreover
%%%%%%%%%%%%%%%%%
\begin{multline}
\label{eq129}
\widetilde{\mathsf{L}}(0,\beta,\phi;s)
=
\frac{\Gamma(\beta)}{\pi\beta}
\biggl[
(e^{s}+1)^{-\beta}-R(s,\phi)^{-\beta}
\biggr.
\\
\biggl.
+\left(\frac{1}{2}\beta s-1\right)
\left(2^{-\beta}
-\left\{2\sin(\tfrac12\phi)\right\}
^{-\beta}\right)
\biggr]
\\
+\frac{1}{\pi\,\Gamma(1-\beta)\,\beta}
\left[
(e^{s}-1)^{-\beta}
\ln\left(\frac{e^{s}+1}{R(s,\phi)}\right)
+\left(\frac{1}{2}\beta s^{1-\beta}
-s^{-\beta}\right)
\ln\left\{\csc(\tfrac12\phi)\right\}
\right],
\end{multline}
%%%%%%%%%%%%%%%%%
and
%%%%%%%%%%%%%%%%%
\begin{multline}
\label{eq130}
\widetilde{\mathsf{L}}(\alpha,0,\phi;s)
=
\frac{1-\alpha}{\alpha\,\Gamma(1+\alpha)}
\\ \times
\left[
\frac{1}{\sin(\pi\alpha)}
\left\{
(e^{s}+1)^{\alpha}-R(s,\phi)^{\alpha}
-\left(1+\frac{1}{2}\alpha s\right)
\left(2^{\alpha}
-\left\{2\sin(\tfrac12\phi)\right\}^{\alpha}\right)
\right\}
\right.
\\
\left.
-\frac{s^{\alpha}}{\pi}
\left\{
\left(\frac{e^{s}-1}{s}\right)^{\alpha}
\ln\left(\frac{e^{s}+1}{R(s,\phi)}\right)
-\left(1+\frac{1}{2}\alpha s\right)
\ln\left\{\csc(\tfrac12\phi)\right\}
\right\}
\right].
\end{multline}
%%%%%%%%%%%%%%%%%
Also for $\phi>0$
%%%%%%%%%%%%%%%%%
\begin{multline}
\label{eq131}
\widetilde{\mathsf{L}}(0,0,\phi;s)
=\frac{1}{2\pi}
\Biggl[2\ln\left\{\frac{R(s,\phi)}{e^{s}+1}\right\}
\ln\left(e^{s}-1\right)
+\ln^{2}\left(e^{s}+1\right)
\biggr. \\ \left.
-\ln^{2}\left\{R(s,\phi)\right\}
+\ln\left\{\sin(\tfrac{1}{2}\phi)\right\}
\ln\left\{\frac{4\sin(\tfrac{1}{2}\phi)}
{s^{2}}\right\}
\right]
\\
=\frac{\left\{1+\cos(\phi)\right\}s^{2}\ln(s)}
{8\pi\left\{1-\cos(\phi)\right\}}
+\mathcal{O}\left(s^{2}\right)
\quad (s \to 0).
\end{multline}
%%%%%%%%%%%%%%%%%
Furthermore $\widetilde{\mathsf{L}}(\alpha,\beta,\phi;s)$ does not vanish at $s=0$ in the following special case
%%%%%%%%%%%%%%%%%
\begin{equation}
\label{eq132}
\widetilde{\mathsf{L}}(0,0,\phi;c \phi)
\to
\frac{1}{8\pi}\ln(1+c^2)\,
\ln\left(\frac{c^4}{1+c^2}\right)
\quad (\phi \to 0, \, c>0).
\end{equation}
%%%%%%%%%%%%%%%%%
We remark that this attains a minimum value $-0.02911\cdots$ at $c=0.66036\cdots$.

As $\phi \to 0$ 
%%%%%%%%%%%%%%%%%
\begin{equation}
\label{eq133}
\widetilde{\mathsf{L}}(\alpha,\beta,\phi;s)
=
\mathcal{O}\left(\phi^{-\beta}\right),
\end{equation}
%%%%%%%%%%%%%%%%%
for fixed $s>0$, $\alpha,\beta \in [0,1]$ and $\nu> 0$. Thus it is unbounded in this limit when $\beta >0$. Finally, as $s \to \infty$,
%%%%%%%%%%%%%%%%%
\begin{equation}
\label{eq134}
\widetilde{\mathsf{L}}(\alpha,\beta,\phi;s)
=\frac{\alpha-\beta}{2(\alpha+\beta)}\left\{
\frac{\widetilde A_2(\alpha,\beta,\phi)}{\Gamma(1+\alpha)}\,s^{1+\alpha}
-\frac{\widetilde A_1(\alpha,\phi)}
{\Gamma(1-\beta)}s^{1-\beta}
\right\}+\mathcal{O}(s^{\alpha}).
\end{equation}

With the above limits in mind, we set
%%%%%%%%%%%%%%%%%
\begin{equation}
\label{eq135}
\widetilde{\mathsf{L}}_{0}(\alpha,\beta,\phi;s)
=
\begin{cases}
s^{-1/2+\beta}\,e^{-s}\phi^{1/2}\,
\widetilde{\mathsf{L}}
\left(\alpha,\beta,\phi;s\right)
& \quad \left((\alpha,\beta)
\in S_0\right),
\\[2mm]
s^{-1+\beta}\,e^{-s}\phi\,
\widetilde{\mathsf{L}}
\left(\alpha,\beta,\phi;s\right)
& \quad \left((\alpha,\beta)
\in S\right),
\end{cases}
\end{equation}
%%%%%%%%%%%%%%%%%
where $S$ and $S_0$ are given by \cref{eq87}. These two different normalisations will be explained after the following proof; see \cref{rem:tildeL0}.

We note that on $S$ the limit of $\widetilde{\mathsf L}_0(\alpha,\beta,\phi;s)$ as $\phi\to0$ is zero when $\frac14 \le \beta<1$. On the edge $\beta=1$, however, the limiting value is generally non-zero, specifically
%%%%%%%%%%%%%%%%%
\begin{equation}
\label{eq136}
\widetilde{\mathsf L}_0(\alpha,1,0;s)
=
\frac{(1-\alpha)e^{-s}s^\alpha}
{\pi\Gamma(2+\alpha)}
\left\{1-\frac{1}{2}(1-\alpha)s\right\}.
\end{equation}
%%%%%%%%%%%%%%%%%
For $\alpha\in[0,1]$ and $s\in[0,\infty)$ this has minimum value $-1/(2\pi e^3)=-0.007924\cdots$, attained at $\alpha=0$, $s=3$. Maple's \texttt{Optimization[Minimize]} routine over $\alpha\in[0,1]$, $s\in[0,10]$ returns this point. For the remaining tail $s\ge10$, we use the cruder lower bound $s(1-\frac12 s)/(2\pi e^s)$, whose minimum on $[10,\infty)$ is attained at $s=10$ and equals $-20/(\pi e^{10})=-0.00028\cdots$.

We also rema that $s^\alpha$ has no unique limit as $(\alpha,s)\to(0^+,0^+)$. However, the expression in \cref{eq136} is harmless at this corner, since $s^\alpha=\exp\{\alpha\ln(s)\}$, its limiting values lie in $[0,1]$, and therefore $\liminf_{\alpha\to0^+,\,s\to0^+}\widetilde{\mathsf L}_0(\alpha,1,0;s)\ge0$. Thus the corner $\alpha=s=0$ does not affect the negative minimum.

\begin{proof}[Proof of \cref{lem:Lraw-bound}]
For $\alpha,\beta \in [0,1]$, $\phi\in[0,\phi_0]$ and $s\in(0,\infty)$, we numerically evaluate
%%%%%%%%%%%%%%%%%
\begin{equation}
\label{eq137}
\widetilde{m}_0
=
\inf\left\{\widetilde{\mathsf{L}}_{0}(\alpha,\beta,\phi;s)\right\}
=-0.0377104213\cdots.
\end{equation}
%%%%%%%%%%%%%%%%%
The existence of this infimum follows from the endpoint behaviour described in \cref{eq126,eq127,eq128,eq129,eq130,eq131,eq132,eq133,eq134}, together with the definition \cref{eq135}. The infimum is approached in the corner $\alpha=\beta=0$, in the limiting case $s=c_0\phi$, $\phi\to0$, with $c_0=0.5281398833\cdots$; see \cref{sec:numerics}, where this minimum is found by a one-variable calculus calculation.

In \cref{fig:tildem0} a graph of $\widetilde{\mathsf{L}}_{0}(\alpha,\beta,\phi;s)$ is given for $\alpha,\beta\in [0,1]$, where $\phi=10^{-50}$ and $s=0.52814 \times 10^{-50}$, illustrating the limiting case $\alpha=\beta=0$, $s=c_0\phi$, and $\phi \to 0$, where the infimum in \cref{eq137} is approached.

\begin{figure}[hthp]
 \centering
 \includegraphics[
 width=0.9\textwidth,keepaspectratio]{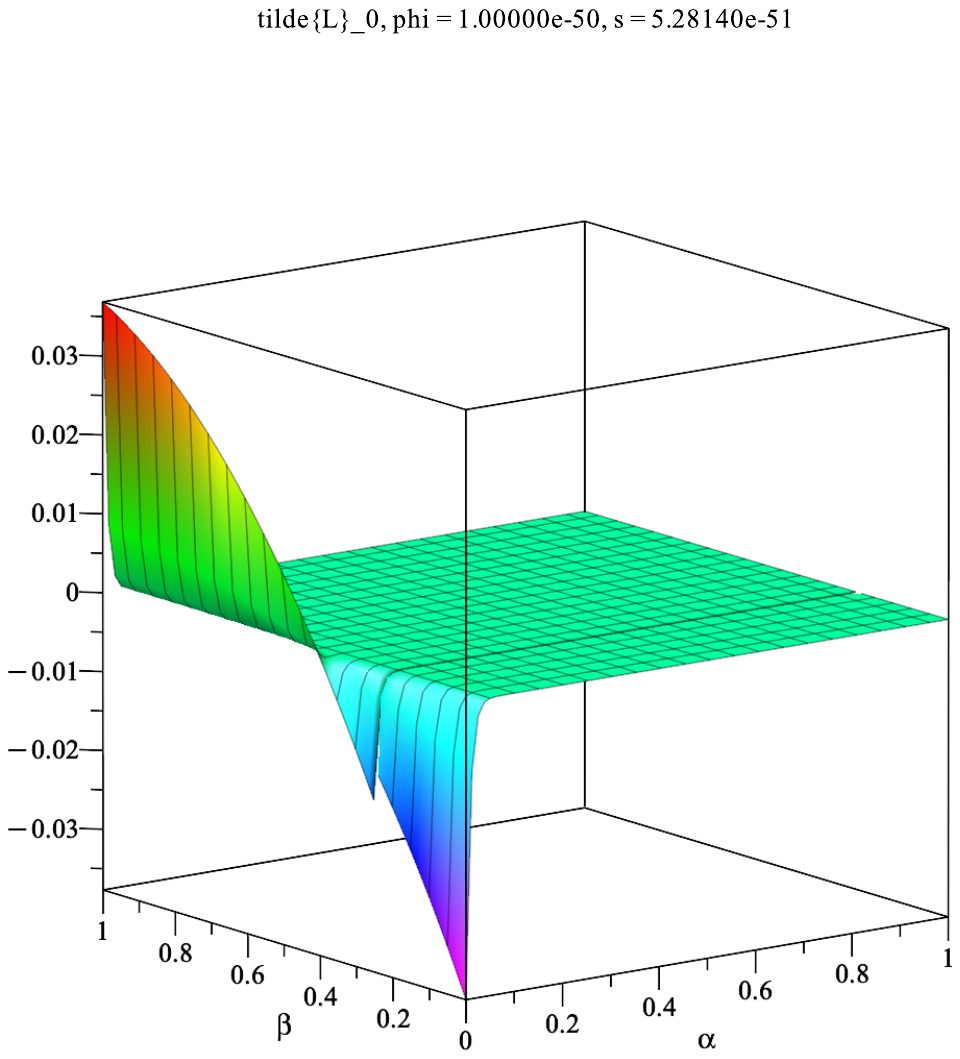}
 \caption{Graph of $\widetilde{\mathsf{L}}_{0}(\alpha,\beta,\phi;s)$ for $\alpha,\beta \in [0,1]$, $\phi=10^{-50}$ and $s=0.52814 \times 10^{-50}$.}
 \label{fig:tildem0}
\end{figure}

Next, suppose $(\alpha,\beta)\in S_0$; then by \cref{eq135,eq137},
%%%%%%%%%%%%%%%%%
\begin{equation*}
\widetilde{\mathsf{L}}(\alpha,\beta,\phi;s)
\ge
\widetilde m_0\,e^{s}\phi^{-1/2}s^{1/2-\beta},
\end{equation*}
%%%%%%%%%%%%%%%%%
and hence, on referring to \cref{eq07,eq14}, we have in the rectangle $S_0$
%%%%%%%%%%%%%%%%%
\begin{multline}
\label{eq139}
n^{1-\beta}\int_0^\infty
\widetilde{\mathsf{L}}(\alpha,\beta,\phi;s)e^{-ns}\,d s
\ge
\widetilde m_0\,n^{1-\beta}\phi^{-1/2}
\int_0^\infty s^{1/2-\beta}e^{-(n-1)s}\,d s
\\
=\widetilde m_0\,\phi^{-1/2}\Gamma
\left(\tfrac32-\beta\right)n^{1-\beta}(n-1)^{-3/2+\beta}
=\widetilde m_0\,\Gamma\left(\tfrac32-\beta\right)
\eta^{-1/2}\left(1-\phi \eta^{-1}\right)^{-3/2+\beta}.
\end{multline}
%%%%%%%%%%%%%%%%%
Since $\eta^{-1/2}\left(1- \eta^{-1}\phi\right)^{-3/2+\beta}$ is decreasing for $\eta>\phi$, and $\widetilde m_0<0$, we conclude from \cref{eq139} and $\eta>9$ that
%%%%%%%%%%%%%%%%%
\begin{equation}
\label{eq140}
n^{1-\beta}\int_0^\infty
\widetilde{\mathsf{L}}(\alpha,\beta,\phi;s)
e^{-ns}\,d s
\ge
\frac{\widetilde m_0}{3}\Gamma
\left(\frac32-\beta\right)
\left(1-\frac{\phi}{9}\right)^{-3/2+\beta},
\end{equation}
%%%%%%%%%%%%%%%%%
for $(\alpha,\beta) \in S_0$.

Now consider $(\alpha,\beta)\in S$. From \cref{eq135,eq137} we then have
%%%%%%%%%%%%%%%%%
\begin{equation*}
\widetilde{\mathsf{L}}
(\alpha,\beta,\phi;s)
\ge
\widetilde m_0\,e^{s}
\phi^{-1}s^{1-\beta},
\end{equation*}
%%%%%%%%%%%%%%%%%
and therefore, again from \cref{eq07,eq14},
%%%%%%%%%%%%%%%%%
\begin{multline}
\label{eq142}
n^{1-\beta}\int_0^\infty
\widetilde{\mathsf{L}}
(\alpha,\beta,\phi;s)e^{-ns}\,d s
\ge
\widetilde m_0\,n^{1-\beta}
\phi^{-1}\int_0^\infty s^{1-\beta}e^{-(n-1)s}\,d s
\\
= \widetilde m_0\,\phi^{-1}
\Gamma(2-\beta)n^{1-\beta}(n-1)^{-2+\beta}
=\widetilde m_0\,\Gamma(2-\beta)
\eta^{-1}
\left(1-\phi \eta^{-1} \right)^{-2+\beta},
\end{multline}
%%%%%%%%%%%%%%%%%
in this rectangle. Since $\eta^{-1}(1- \eta^{-1} \phi)^{-2+\beta}$ is decreasing for $\eta>\phi$, and $\widetilde m_0<0$, it follows from \cref{eq142} and $\eta>9$ that
%%%%%%%%%%%%%%%%%
\begin{equation}
\label{eq143}
n^{1-\beta}\int_0^\infty
\widetilde{\mathsf{L}}
(\alpha,\beta,\phi;s)e^{-ns}\,d s
\ge
\frac{\widetilde m_0}{9}\,\Gamma(2-\beta)
\left(1-\frac{\phi}{9}\right)^{-2+\beta},
\end{equation}
%%%%%%%%%%%%%%%%%
for $(\alpha,\beta)\in S$. Combining \cref{eq140,eq143}, and using from \cref{eq137} that $\widetilde m_0>-0.038$, yields \cref{eq88} where $\varpi(\alpha,\beta,\phi)$ is given by \cref{eq86}.
\end{proof}

\begin{remark}
\label{rem:tildeL0}
The reason why $\widetilde{\mathsf{L}}_{0}(\alpha,\beta,\phi;s)$ is defined in two pieces in \cref{eq135} is to keep both $\widetilde{\mathsf{L}}_{0}(\alpha,\beta,\phi;s)$ and $\varpi(\alpha,\beta,\phi)$ bounded in the relevant endpoint regimes. The rectangle $S_0$ contains the edge $\beta=0$ and the joint corner $\alpha=\beta=0$, where the endpoint $s,\phi\to0$ is most delicate. In this region the factor $\phi^{1/2}$ yields \cref{eq140}, which does not contain a negative power of $\phi$ and therefore remains bounded as $\phi\to0$.

The power of $s$ has a different role. In $S_0$ the worst behaviour occurs when $s$ and $\phi$ tend to zero together. In particular, the limiting case $\alpha=\beta=0$, $s=c\phi$, shows that the more natural normalisation with $s^{-1+\beta}$ is too singular as $\phi\to0$; see \cref{eq132}. Using the weaker factor $s^{-1/2+\beta}$ controls this corner and still leads to the explicit integral estimate \cref{eq139}.

On the complementary rectangle $S$, the parameter $\beta$ is bounded away from zero but may be as large as $1$. For this reason the factor $\phi$ is needed in this region, specifically, for fixed $s>0$, one generally has the singular behaviour \cref{eq133}. Thus $\phi\widetilde{\mathsf L}(\alpha,\beta,\phi;s)$ remains bounded on $S$, whereas $\phi^{1/2}\widetilde{\mathsf L}(\alpha,\beta,\phi;s)$ would not remain bounded as $\phi\to 0$ when $\beta>1/2$. The accompanying power $s^{-1+\beta}$ is chosen to obtain the uniform integral estimate \cref{eq143}. If $s^{-1/2+\beta}$ were used instead on $S$, the corresponding integrated bound \cref{eq143} would contain an extra factor $\phi^{-1/2}$ and therefore again would not be uniform as $\phi\to0$.

The cutoff value $\frac14$ in the definitions \cref{eq87} is not intrinsic; it is simply a convenient fixed value in $(0,\frac12)$ separating $S_0$ which contains small values of $\beta$ from $S$ that is bounded away from the edge $\beta=0$.
\end{remark}

\section{Numerical computations} 
\label{sec:numerics}

For the numerical verifications leading to \cref{eq64,eq73}, we minimised the relevant functions over $\alpha,\beta\in[0,1]$ and $\zeta\in[0,81]$ using Maple with high-precision arithmetic, evaluating the functions $M_{j,k}(\alpha,\beta;\zeta)$ through Maple's built-in Meijer $G$ routines for $\zeta>0$ and using \cref{eq46} at $\zeta=0$. The search sampled $\alpha$ and $\beta$ on grids biased towards the boundary values $0$ and $1$. For each fixed pair $(\alpha,\beta)$, the minimisation over $\zeta$ was guided by \cref{lem:alpha<beta,lem:alpha>beta} below, described as follows: when $\alpha\ge\beta$, the relevant Meijer combinations are monotone in $\zeta$, so only the endpoints $\zeta=0$ and $\zeta=81$ need be checked. When $\alpha<\beta<1$, each relevant Meijer combination has at most one stationary point, so we checked the endpoints and, when present, this unique interior stationary point in $(0,81)$, found by solving the corresponding derivative equation using \texttt{fsolve} with sign-change bracketing. The edge $\beta=1$ was treated as a boundary case in numerical minimisation.

The derivatives of the functions $M_{j,k}(\alpha,\beta;\zeta)$ can be expressed in terms of the same functions with shifted parameters. These formulae follow directly by differentiating the defining integrals \cref{eq40,eq41,eq42,eq43} with respect to $\zeta$. For example, from \cref{eq40},
%%%%%%%%%%%%%%%%%
\begin{equation*}
\frac{\partial}{\partial \zeta}
M_{1,0}(\alpha,\beta;\zeta)
=\frac{1}{2} \alpha \, M_{1,0}(\alpha-2,\beta;\zeta).
\end{equation*}
%%%%%%%%%%%%%%%%%

The numerical evaluation of \cref{eq94} was carried out similarly, but in the smaller interval $\phi\in[0,\phi_0]$. For each fixed pair $(\alpha,\beta)$, we checked the endpoints $\phi=0$ and $\phi=\phi_0$, and used \texttt{fsolve} with sign-change bracketing to locate any interior stationary points in $(0,\phi_0)$ detected by the corresponding derivative equation.

For the computation leading to \cref{eq106}, we minimised $\mathsf L_0(\alpha,\beta,\rho;s)$ over $\alpha,\beta\in[0,1]$, $\rho\in[0,2\sin(\tfrac12\phi_0)]$, and $s\in[0,10]$. For the verification leading to \cref{eq137}, we similarly minimised $\widetilde{\mathsf L}_0(\alpha,\beta,\phi;s)$ over $\alpha,\beta\in[0,1]$, $\phi\in[0,\phi_0]$, and $s\in[0,10]$. The restriction to $s\le10$ is numerically justified by the exponential factor in the normalisations in \cref{eq105,eq135}, together with the large-$s$ behaviour in \cref{eq134}; no smaller values were found beyond this range in the lower-bound search. In both cases the parameter variables $(\alpha,\beta,\rho)$ for \cref{eq106} and $(\alpha,\beta,\phi)$ for \cref{eq137} were sampled on grids biased toward the boundary values. For each fixed triple $(\alpha,\beta,\rho)$ or $(\alpha,\beta,\phi)$, respectively, the minimisation in $s$ was carried out by checking the endpoint $s=10$ together with any stationary points in $(0,10)$ found from the corresponding derivative equation using \texttt{fsolve} with sign-change bracketing.

The endpoint $s=0$ did not need to be sampled separately in the numerical verifications leading to \cref{eq106,eq137}, since it is accounted for by the endpoint asymptotics. Indeed, for $\mathsf L_0(\alpha,\beta,\rho;s)$, and for $\widetilde{\mathsf L}_0(\alpha,\beta,\phi;s)$ with fixed $\phi>0$, the normalised kernels tend to zero as $s\to0$. At the joint corner $\alpha=\beta=0$ with $s=c\phi$, however, \cref{eq132,eq135} give the non-zero limiting value
%%%%%%%%%%%%%%%%%
\begin{equation}
\label{eq145}
\widetilde{\mathsf L}_{0}(0,0,\phi;c\phi)
\to
\frac{1}{8\pi\sqrt{c}}\ln(1+c^2)
\ln\left(\frac{c^4}{1+c^2}\right)
\quad (\phi\to0,\ c>0).
\end{equation}
%%%%%%%%%%%%%%%%%
The minimum of this limiting expression on $c\in(0,\infty)$ is found by elementary calculus to be $-0.0377104213\cdots$, attained at $c=c_0:=0.5281398833\cdots$, in agreement with the value of $\widetilde m_0$ in \cref{eq137}. The minimisation of \cref{eq145} for $c\in(0,\infty)$, together with the vanishing endpoint limits as $c\to0^+$ and $c\to\infty$, accounts for all relative rates at which $s$ and $\phi$ both tend to zero at this corner.

As discussed above, the following two lemmas were used to reduce the minimisation of the relevant Meijer-function combinations to a finite set of endpoint and stationary-point checks.

%%%%%%%%%%%%%%%%%
\begin{lemma}
\label{lem:alpha<beta}
Assume $0<\alpha<\beta < 1$. Then, as functions of $\zeta\in(0,\infty)$, the quantities
%%%%%%%%%%%%%%%%%
\begin{equation}
\label{eq146}
\Psi_{0}(\zeta)
=
\sin(\pi\beta)M_{1,0}(\alpha,\beta;\zeta)
+\sin(\pi\alpha)M_{2,0}(\alpha,\beta;\zeta),
\end{equation}
%%%%%%%%%%%%%%%%%
and
%%%%%%%%%%%%%%%%%
\begin{equation}
\label{eq147}
\Psi_{1}(\zeta)
=
\sin(\pi\beta)M_{1,1}(\alpha,\beta;\zeta)
+\sin(\pi\alpha)M_{2,1}(\alpha,\beta;\zeta),
\end{equation}
%%%%%%%%%%%%%%%%%
which occur in \cref{eq56}, have at most one critical point.
\end{lemma}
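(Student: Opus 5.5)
The plan is to write $\Psi_k'(\zeta)$ as the difference of two positive integrals. A critical point then occurs exactly where the ratio of those integrals equals a fixed constant. The main step is to show that this ratio is strictly monotone in $\zeta$, so that the equation has at most one solution.

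\textbf{Step 1: the derivative.} Differentiating \cref{eq40,eq41,eq42,eq43} under the integral sign, as in \cref{sec:numerics}, gives for $k=0,1$
\begin{equation*}
\Psi_k'(\zeta)=\tfrac12\alpha\sin(\pi\beta)\,A_k(\zeta)-\tfrac12\beta\sin(\pi\alpha)\,B_k(\zeta),
\end{equation*}
where
\begin{equation*}
A_k(\zeta)=\int_0^\infty t^{k-\beta}(t^2+\zeta)^{\alpha/2-1}e^{-t}\,dt,
\qquad
B_k(\zeta)=\int_0^\infty t^{k+\alpha}(t^2+\zeta)^{-\beta/2-1}e^{-t}\,dt .
\end{equation*}
For $\zeta>0$ and $0<\alpha<\beta<1$ both integrals converge and are strictly positive, since $k-\beta>-1$. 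Hence $\Psi_k'(\zeta)=0$ if and only if
\begin{equation*}
Q_k(\zeta):=\frac{B_k(\zeta)}{A_k(\zeta)}=\frac{\alpha\sin(\pi\beta)}{\beta\sin(\pi\alpha)}.
\end{equation*}
Moreover $\Psi_k'$ has the same sign as $\frac{\alpha\sin(\pi\beta)}{\beta\sin(\pi\alpha)}-Q_k(\zeta)$. It therefore suffices to show that $Q_k$ is strictly decreasing on $(0,\infty)$. Then $\Psi_k'$ changes sign at most once and vanishes at most once.

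\textbf{Step 2: rescaling.} Substitute $t=\sqrt{\zeta}\,u$ and write $x=\sqrt{\zeta}$.
\begin{itemize}
\item In $B_k$ the powers of $\zeta$ combine to $\zeta^{(k+\alpha+1)/2-\beta/2-1}$.
\item In $A_k$ they combine to $\zeta^{(k-\beta+1)/2+\alpha/2-1}$.
\end{itemize}
These two powers are equal, so they cancel in the ratio and
\begin{equation*}
Q_k(\zeta)=\frac{\int_0^\infty g(u)\,p_k(u)\,e^{-xu}\,du}{\int_0^\infty p_k(u)\,e^{-xu}\,du}.
\end{equation*}
Here the weight and test function are
\begin{equation*}
p_k(u)=u^{k-\beta}(u^2+1)^{\alpha/2-1}>0,
\qquad
g(u)=\left(\frac{u^2}{u^2+1}\right)^{\nu/2},
\end{equation*}
with $\nu=\alpha+\beta$. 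Thus $Q_k(\zeta)=\mathbb E_x[g]$, the mean of $g$ under the probability density proportional to $p_k(u)e^{-xu}$ on $(0,\infty)$.

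\textbf{Step 3: monotonicity.} Differentiating in $x$ under the integral sign, which is justified by the exponential decay for $x>0$, gives
\begin{equation*}
\frac{d}{dx}\mathbb E_x[g]=-\bigl(\mathbb E_x[ug]-\mathbb E_x[u]\,\mathbb E_x[g]\bigr)=-\operatorname{Cov}_x(u,g).
\end{equation*}
Since $g$ is strictly increasing in $u$, Chebyshev's association inequality gives $\operatorname{Cov}_x(u,g)>0$. Concretely, $\iint (u-v)\{g(u)-g(v)\}\,d\mu(u)\,d\mu(v)>0$ for a non-degenerate probability measure $\mu$. Hence $Q_k$ is strictly decreasing in $x$, and therefore in $\zeta$, which completes the argument.

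The only delicate point is the direct approach, which does not settle the sign. Differentiating the ratio in $\zeta$ without rescaling produces a covariance term whose sign conflicts with the pointwise derivative of $g$. The rescaling $t=\sqrt{\zeta}\,u$ removes this difficulty: it isolates all the $\zeta$-dependence in the factor $e^{-xu}$, because the powers of $\zeta$ cancel exactly. The remaining checks are routine: convergence at $u=0$, where $k-\beta>-1$, and domination for the differentiation under the integral sign.
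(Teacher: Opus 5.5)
Your proof is correct. It shares the paper's key substitution $t=\sqrt{\zeta}\,x$, which moves all the $\zeta$-dependence into the factor $e^{-\sqrt{\zeta}x}$. It also ends, as the paper does, with a double integral whose integrand carries a factor $(x-y)$ of fixed sign.

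The decomposition in between is different. In your notation the paper's integrand \cref{eq152} is $q(x)=p_0(x)\{\kappa-\chi g(x)\}$, with $\kappa=\alpha\sin(\pi\beta)$ and $\chi=\beta\sin(\pi\alpha)$. The paper uses $\kappa<\chi$ and the monotonicity of $g$ to show that $q$ has exactly one sign change, at some $x_0$. It then splits $\mathcal G=P-N$ at $x_0$ and proves that $N/P$ is decreasing; this is the classical variation-diminishing property of the Laplace transform.

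You instead factor out $\chi A_k$ first. Critical points then become solutions of $Q_k=\kappa/\chi$, where $Q_k$ is a weighted mean of $g$ that involves neither $\kappa$ nor $\chi$. You show $Q_k$ is strictly decreasing by the covariance form of Chebyshev's inequality.

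What your route buys is that it never needs $\kappa<\chi$ or the existence of $x_0$. It shows that $\Psi_k'$ vanishes at most once for every $\alpha,\beta\in(0,1)$. The same identity also recovers \cref{lem:alpha>beta} within one framework: when $\beta\le\alpha$ one has $\kappa\ge\chi$ while $Q_k<1$, so $\Psi_k'>0$.

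The paper's version, conversely, uses only the sign pattern of $q$. It would therefore apply unchanged to integrands that do not factor as a positive weight times a constant minus a monotone function.
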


\begin{proof}
By \cref{eq40,eq42},
%%%%%%%%%%%%%%%%%
\begin{multline}
\label{eq148}
\Psi_{0}'(\zeta)
=
\frac12\int_{0}^{\infty}t^{-\beta}(t^{2}
+\zeta)^{-\beta/2-1}
\\ \times
\left[
\alpha\sin(\pi\beta)(t^{2}+\zeta)^{(\alpha+\beta)/2}
-\beta\sin(\pi\alpha)t^{\alpha+\beta}
\right] e^{-t} dt.
\end{multline}
%%%%%%%%%%%%%%%%%
Set
%%%%%%%%%%%%%%%%%
\begin{equation}
\kappa=\alpha\sin(\pi\beta),\quad
\chi=\beta\sin(\pi\alpha).
\end{equation}
%%%%%%%%%%%%%%%%%
Since $0<\alpha<\beta<1$ and $x\mapsto \sin(\pi x)/x$ is strictly decreasing on $(0,1)$, we have $\kappa<\chi$. Now make the change of variable $t=\sqrt{\zeta}\,x$ in \cref{eq148}. Then
%%%%%%%%%%%%%%%%%
\begin{equation}
\label{eq150}
\Psi_{0}'(\zeta)
=
\frac12\,\zeta^{(\alpha-\beta-1)/2}\mathcal G(\sqrt{\zeta}),
\end{equation}
%%%%%%%%%%%%%%%%%
where
%%%%%%%%%%%%%%%%%
\begin{equation}
\label{eq151}
\mathcal G(s)=\int_{0}^{\infty}
q(x) e^{-sx} dx
\quad (s\ge 0),
\end{equation}
%%%%%%%%%%%%%%%%%
with
%%%%%%%%%%%%%%%%%
\begin{equation}
\label{eq152}
q(x)=
x^{-\beta}(1+x^{2})^{-\beta/2-1}
\left\{\kappa\,(1+x^{2})^{\nu/2}
-\chi\, x^{\nu}\right\},
\end{equation}
%%%%%%%%%%%%%%%%%
where as before $\nu=\alpha+\beta \in (0,2]$. The integral in \cref{eq151} is convergent at both end points when $s \ge 0$, since $q(x)=\mathcal O(x^{-\beta})$ as $x\to0$ and $q(x)=\mathcal O(x^{\alpha-\beta-2})$ as $x\to\infty$, with $\beta<1$ and $\alpha-\beta<1$. Now, since $\kappa<\chi$, the bracket in \cref{eq152} is positive at $x=0$ and negative for large $x$. Moreover, it is strictly decreasing after division by $x^{\nu}$, because
%%%%%%%%%%%%%%%%%
\begin{equation*}
x^{-\nu}(1+x^{2})^{\nu/2}
=
\left(1+x^{-2}\right)^{\nu/2}
\end{equation*}
%%%%%%%%%%%%%%%%%
is strictly decreasing for $x>0$. Hence there exists a unique $x_{0}>0$ such that
%%%%%%%%%%%%%%%%%
\begin{equation}
\label{eq154}
q(x)>0\quad (0<x<x_{0}),
\quad
q(x)<0\quad (x>x_{0}).
\end{equation}
%%%%%%%%%%%%%%%%%
Write
%%%%%%%%%%%%%%%%%
\begin{equation}
\label{eq155}
P(s)=\int_{0}^{x_{0}}q(x)e^{-sx} dx,
\quad
N(s)=-\int_{x_{0}}^{\infty}q(x) e^{-sx} dx,
\end{equation}
%%%%%%%%%%%%%%%%%
then $P(s)>0$, $N(s)>0$, and from \cref{eq151,eq155}
%%%%%%%%%%%%%%%%%
\begin{equation}
\label{eq156}
\mathcal G(s)=P(s)-N(s)
=P(s)\{1-T(s)\},
\end{equation}
%%%%%%%%%%%%%%%%%
where
%%%%%%%%%%%%%%%%%
\begin{equation*}
T(s)=N(s)/P(s).
\end{equation*}
%%%%%%%%%%%%%%%%%
Now a direct computation gives
%%%%%%%%%%%%%%%%%
\begin{multline}
\label{eq158}
T'(s)
=\frac{N'(s)P(s)-N(s)P'(s)}{P(s)^{2}}
\\
=
-\frac{1}{P(s)^{2}}
\int_{x_{0}}^{\infty}\int_{0}^{x_{0}}
(x-y)e^{-s(x+y)}\{-q(x)\}q(y)\,d y\,d x.
\end{multline}
%%%%%%%%%%%%%%%%%
Since $x>x_{0}>y$ throughout the integration region, the integrand in \cref{eq158} is strictly positive, and therefore $T'(s)<0$ for $s>0$. Thus $T(s)$ is strictly decreasing and hence, recalling $P(s)>0$, it follows from \cref{eq156} that $\mathcal G(s)$ can vanish for at most one value of $s>0$. From \cref{eq150,eq151}, we therefore deduce that $\Psi_{0}'(\zeta)$ can vanish for at most one value of $\zeta>0$, as asserted.

The proof for $\Psi_{1}(\zeta)$ is almost identical. Indeed, by \cref{eq41,eq43}, the derivative $\Psi_{1}'(\zeta)$ is given by the same argument as above, except that the factor $t^{-\beta}$ in \cref{eq148} is replaced by $t^{1-\beta}$. After the same change of variable $t=\sqrt{\zeta}\,x$, this only multiplies $q(x)$ by the positive factor $x$. Hence the sign pattern \cref{eq154} is unchanged, and the same argument yields that $\Psi_{1}(\zeta)$ also has at most one critical point on $(0,\infty)$.
\end{proof}

%%%%%%%%%%%%%%%%%
\begin{lemma}
\label{lem:alpha>beta}
Assume $0<\beta\le\alpha\le 1$. Then, as functions of $\zeta\in[0,\infty)$, both $\Psi_{0}(\zeta)$ and $\Psi_{1}(\zeta)$ are increasing. Consequently, for each fixed $n\ge1$, the quantity appearing in \cref{eq56}
%%%%%%%%%%%%%%%%%
\begin{multline}
\label{eq159}
-n^{-1-\alpha+\beta}
\left\{
\sin(\pi\beta)M_{1,0}(\alpha,\beta;\zeta)
+\sin(\pi\alpha)M_{2,0}(\alpha,\beta;\zeta)
\right\}
\\
-\tfrac{1}{2}(\alpha-\beta)\,n^{-2-\alpha+\beta}
\left\{
\sin(\pi\beta)M_{1,1}(\alpha,\beta;\zeta)
+\sin(\pi\alpha)M_{2,1}(\alpha,\beta;\zeta)
\right\}
\end{multline}
%%%%%%%%%%%%%%%%%
is decreasing in $\zeta$, and hence its infimum on $\zeta\in[0,81]$ is attained at $\zeta=81$.
\end{lemma}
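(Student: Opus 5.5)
The plan is to reuse the derivative formula \cref{eq148} from the proof of \cref{lem:alpha<beta} and to show that, when $\beta\le\alpha$, its integrand is pointwise nonnegative. No sign-change analysis is then needed.

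First, for $\zeta>0$, I differentiate \cref{eq40,eq42} under the integral sign. For $\zeta\ge\zeta_1>0$ the $\zeta$-derivatives of the integrands are dominated by integrable functions of the form $C t^{-\beta}(1+t)^{\alpha}e^{-t}$ and $C t^{\alpha}e^{-t}$, so this step is legitimate. Combining the two terms gives exactly \cref{eq148}:
\begin{equation*}
\Psi_0'(\zeta)=\frac12\int_0^\infty t^{-\beta}(t^2+\zeta)^{-\beta/2-1}\left[\kappa(t^2+\zeta)^{\nu/2}-\chi t^{\nu}\right]e^{-t}\,dt,
\end{equation*}
where $\kappa=\alpha\sin(\pi\beta)$ and $\chi=\beta\sin(\pi\alpha)$. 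To check the combination, note that $t^{-\beta}(t^2+\zeta)^{-\beta/2-1+\nu/2}=t^{-\beta}(t^2+\zeta)^{\alpha/2-1}$, which is the integrand of $\partial_\zeta M_{1,0}$ up to the factor $\alpha/2$. Similarly, $t^{-\beta+\nu}=t^{\alpha}$ recovers the integrand of $\partial_\zeta M_{2,0}$ up to the factor $-\beta/2$.

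The key step is the sign of the bracket. Since $x\mapsto\sin(\pi x)/x$ is decreasing on $(0,1]$ and $\beta\le\alpha$, we have $\sin(\pi\beta)/\beta\ge\sin(\pi\alpha)/\alpha$, that is, $\kappa\ge\chi\ge0$. This includes $\alpha=1$, where $\chi=0$ and $\kappa=\sin(\pi\beta)\ge0$. Moreover $(t^2+\zeta)^{\nu/2}\ge t^{\nu}$ for $\zeta\ge0$. Hence
\begin{equation*}
\kappa(t^2+\zeta)^{\nu/2}-\chi t^{\nu}\ge\chi\left\{(t^2+\zeta)^{\nu/2}-t^{\nu}\right\}\ge0,
\end{equation*}
and the inequality is strict for $\zeta>0$ when $\chi>0$. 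If $\chi=0$ (that is, $\alpha=1$), the bracket equals $\kappa(t^2+\zeta)^{\nu/2}$, which is strictly positive provided $\beta<1$. Thus $\Psi_0'(\zeta)\ge0$ on $(0,\infty)$.

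Since $M_{j,k}(\alpha,\beta;\zeta)$ is continuous at $\zeta=0$ by dominated convergence (see also \cref{eq46}), $\Psi_0$ is increasing on $[0,\infty)$. For $\Psi_1$, the factor $t^{-\beta}$ becomes $t^{1-\beta}$ and the bracket is unchanged, so the same argument applies.

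For the consequence, the first line of \cref{eq159} equals $-n^{-1-\alpha+\beta}\Psi_0(\zeta)$. This is a negative multiple of an increasing function, so it is decreasing. The second line equals $-\tfrac12(\alpha-\beta)n^{-2-\alpha+\beta}\Psi_1(\zeta)$, and its coefficient is $\le0$ because $\alpha\ge\beta$, so it is nonincreasing as well. The sum is therefore decreasing in $\zeta$, and its infimum over $[0,81]$ is attained at $\zeta=81$.

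The main point needing care is the justification of differentiating under the integral sign together with the endpoint behaviour at $\zeta=0$. For $\zeta=0$ and $\beta$ near $1$, the derivative integrand behaves like $t^{\alpha-\beta-2}$ near $t=0$, which need not be integrable. This is why I establish monotonicity on the open interval $(0,\infty)$ and then extend it to $[0,\infty)$ by continuity, rather than trying to differentiate at $\zeta=0$.
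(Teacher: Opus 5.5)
Your proposal is correct and is essentially the paper's proof: both use \cref{eq148}, the inequality $\alpha\sin(\pi\beta)\ge\beta\sin(\pi\alpha)$ from the monotonicity of $\sin(\pi x)/x$, and $(t^2+\zeta)^{\nu/2}\ge t^{\nu}$ to make the integrand pointwise nonnegative, then pass to $\Psi_1$ via the extra positive factor $t$ and to \cref{eq159} using $\alpha-\beta\ge0$. Proving monotonicity on $(0,\infty)$ and extending to $\zeta=0$ by continuity is slightly more careful than the paper, which asserts $\Psi_0'(\zeta)\ge0$ for $\zeta\ge0$; in fact, at $\zeta=0$ the differentiated integrals of $M_{1,0}$ and $M_{2,0}$ have integrands of order $t^{\alpha-\beta-2}$ near $t=0$, so they diverge for every admissible $\alpha,\beta$, not only for $\beta$ near $1$.
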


\begin{proof}
The derivative formula \cref{eq148} remains valid in the present case. Since $0<\beta\le\alpha\le1$ and $x\mapsto \sin(\pi x)/x$ is decreasing on $(0,1)$, we have this time
%%%%%%%%%%%%%%%%%
\begin{equation}
\label{eq160}
\alpha\sin(\pi\beta)\ge \beta\sin(\pi\alpha).
\end{equation}
%%%%%%%%%%%%%%%%%
Also, since $t^{2}+\zeta\ge t^{2}$,
%%%%%%%%%%%%%%%%%
\begin{equation}
\label{eq161}
(t^{2}+\zeta)^{(\alpha+\beta)/2}\ge t^{\alpha+\beta}.
\end{equation}
%%%%%%%%%%%%%%%%%
It follows from \cref{eq148,eq160,eq161} that $\Psi_{0}'(\zeta)\ge0$ for $\zeta\ge0$, and hence $\Psi_{0}(\zeta)$ is increasing on $[0,\infty)$.

Next, as noted in the proof of \cref{lem:alpha<beta}, the derivative formula for $\Psi_{1}(\zeta)$ differs from \cref{eq148} only by the replacement of the positive factor $t^{-\beta}$ by $t^{1-\beta}$. Hence $\Psi_{1}'(\zeta)\ge0$ for $\zeta\ge0$, and consequently $\Psi_{1}(\zeta)$ is also increasing on $[0,\infty)$.

Finally, \cref{eq159} is the negative of a nonnegative linear combination of $\Psi_{0}(\zeta)$ and $\Psi_{1}(\zeta)$, since $\alpha-\beta\ge0$ under the hypothesis of the lemma. Therefore \cref{eq159} is decreasing in $\zeta$, and the proof is complete.
\end{proof}

We conclude by considering the evaluation of the functions $M_{j,k}(\alpha,\beta;\zeta)$ obtained from \cref{eq51,eq52}. We used Maple's built-in Meijer $G$ routines, and to check the accuracy of these, we independently computed the defining integrals \cref{eq40,eq41,eq42,eq43} using MATLAB, which is optimised for double-precision numerical quadrature. The grid used was $\zeta=1,\ldots,81$ and $\alpha,\beta\in\{2\times10^{-6},0.1,0.2,\ldots,0.9,1-2\times10^{-6}\}$. Note that when $\zeta=0$ we use the explicit gamma values in \cref{eq46}, rather than direct substitution into the Meijer $G$ representations \cref{eq51,eq52}.

The defining integrals \cref{eq40,eq41,eq42,eq43} can all be written in the canonical form
%%%%%%%%%%%%%%%%%
\begin{equation}
\label{eq162}
I(a,b;\zeta)=\int_0^\infty t^{-b}(t^2+\zeta)^{a/2}e^{-t}\,dt.
\end{equation}
%%%%%%%%%%%%%%%%%
For the endpoint-sensitive cases, especially when $b$ is close to $1$, we stabilised the quadrature by integrating by parts. This gives
%%%%%%%%%%%%%%%%%
\begin{equation}
\label{eq163}
I(a,b;\zeta)
=
\frac{1}{1-b}\int_0^\infty t^{1-b}
\left\{(t^2+\zeta)^{a/2}
-a t(t^2+\zeta)^{a/2-1}\right\}e^{-t}\,dt
\quad (b<1).
\end{equation}
%%%%%%%%%%%%%%%%%
The cases \cref{eq40,eq41,eq42,eq43} were then obtained by taking $M_{1,0}(\alpha,\beta;\zeta)=I(\alpha,\beta;\zeta)$, $M_{1,1}(\alpha,\beta;\zeta)=I(\alpha,\beta-1;\zeta)$, $M_{2,0}(\alpha,\beta;\zeta)=I(-\beta,-\alpha;\zeta)$, and $M_{2,1}(\alpha,\beta;\zeta)=I(-\beta,-1-\alpha;\zeta)$.

The MATLAB quadrature values ($M^{(Q)}$, say), computed using double-precision adaptive quadrature, were compared with the Maple Meijer $G$ values ($M^{(G)}$, say) on the same grid. The relative error was computed as $|M^{(G)}-M^{(Q)}|/M^{(Q)}$. We found that largest relative errors were approximately $2.68\times10^{-11}$, $6.22\times10^{-15}$, $1.34\times10^{-11}$, and $1.02\times10^{-15}$ for $M_{1,0}(\alpha,\beta;\zeta)$, $M_{1,1}(\alpha,\beta;\zeta)$, $M_{2,0}(\alpha,\beta;\zeta)$, and $M_{2,1}(\alpha,\beta;\zeta)$, respectively. The largest absolute discrepancies occurred only in endpoint-sensitive cases near $\beta=1$, where the corresponding function values are large and the MATLAB double-precision quadrature is least stable. In the remaining cases, the agreement was close to the accuracy expected from double-precision quadrature.

For the most endpoint-sensitive cases in this grid, the observed discrepancies are dominated by the MATLAB double-precision quadrature rather than by Maple's Meijer $G$ evaluation. To check this, we recomputed representative worst-case values using high-precision Maple quadrature, stabilised by \cref{eq163}, and compared these with the Maple Meijer $G$ routines. For example, setting Digits equal to 100, for $\alpha=\beta=0.999998$ and $\zeta=81$, the Maple quadrature and Maple Meijer $G$ values agreed to essentially the full 100-digit precision requested.

As mentioned above, at the endpoint $\zeta=0$ we used the explicit gamma values in \cref{eq46}, rather than direct substitution into the Meijer $G$ representations \cref{eq51,eq52}, since direct substitution at $\zeta=0$ returns the incorrect value zero in Maple. This occurs because the singular behaviour of the Meijer $G$ factor at the origin cancels the explicit power of $\zeta$ in \cref{eq51,eq52}, and this cancellation is not captured by direct substitution. As a check for values close to $\zeta=0$, evaluating the Meijer $G$ formulas at $\zeta=10^{-100}$ gives relative discrepancies of order $10^{-28}$ for $M_{1,0}(\alpha,\beta;\zeta)$ and $M_{2,0}(\alpha,\beta;\zeta)$, and of order $10^{-58}$ for $M_{1,1}(\alpha,\beta;\zeta)$ and $M_{2,1}(\alpha,\beta;\zeta)$, when compared with the limiting gamma values in \cref{eq46}.

The Maple source code used for the numerical computations described in this section and animated versions of \cref{fig:M5,fig:Mp5,fig:tildeP,fig:Meijer-m0,fig:tildem0} are available at \cite{Dunster:2026:BMM}.

\section*{Acknowledgement}
Financial support from Ministerio de Ciencia e Innovación project PID2024-159583NB-I00 (MICIU/ AEI / 10.13039/501100011033 / FEDER, UE) is acknowledged.

\section*{Conflict of interest}
The author declares no conflicts of interest.

\makeatletter
\interlinepenalty=10000

\bibliographystyle{siamplain}
\bibliography{biblio}
\end{document}